\newcommand*{\@old@slash}{}\let\@old@slash\slash
\def\slash{\relax\ifmmode\delimiter"502F30E\mathopen{}\else\@old@slash\fi}
\titleformat{\section}{\normalsize\bfseries}{\thesection}{1em}{}
\titleformat{\subsection}{\normalsize\bfseries}{\thesubsection}{1em}{}
\numberwithin{equation}{subsection}
\theoremstyle{plain}
\newtheorem{prop}[subsection]{Proposition}
\newtheorem{cor}[subsection]{Corollary}
\newtheorem{thm}[subsection]{Theorem}
\theoremstyle{definition}
\newtheorem{defn}[subsection]{Definition}
\newtheorem{exa}[subsection]{Example}
\newtheorem{rem}[subsection]{Remark}
\newtheorem{para}[subsection]{}
\newtheorem{assn}[subsection]{Assumption}
\newtheorem{notn}[subsection]{Notation}
\DeclareMathAlphabet{\mathpzc}{OT1}{pzc}{m}{it}
\DeclareMathAlphabet{\mathcalligra}{T1}{calligra}{m}{n}
\newcommand{\pref}[1]{\textnormal{(\ref{#1})}}
\newcommand{\cO}{\ensuremath{\mathcal{O}}}
\newcommand{\C}{\ensuremath{\mathscr{C}}}
\newcommand{\normalJ}{\ensuremath{\mathscr{J}}}
\newcommand{\J}{\ensuremath{{\kern -0.4ex \mathscr{J}}}}
\newcommand{\Jalpha}{\J_{\kern -1ex \alpha}}
\newcommand{\sP}{\ensuremath{\mathscr{P}}}
\newcommand{\V}{\ensuremath{\mathscr{V}}}
\newcommand{\fJ}{\mathfrak{J}}
\newcommand{\fU}{\mathfrak{U}}
\newcommand{\fSet}{\mathfrak{Small}}
\newcommand{\Ja}{{\fJ_\alpha}}
\newcommand{\VCAT}{\ensuremath{\V\textnormal{-\text{CAT}}}}
\newcommand{\ALGCAT}{\ensuremath{\operatorname{\textnormal{\text{ALGCAT}}}}}
\newcommand{\ALGCATJ}{\ensuremath{\ALGCAT_{\kern -0.6ex \normalJ}}}
\newcommand{\ADA}{\ensuremath{\operatorname{\textnormal{\text{ADA}}}}}
\newcommand{\ADAJ}{\ensuremath{\ADA_{\kern -0.6ex \normalJ}}}
\newcommand{\TT}{\ensuremath{\mathbb{T}}}
\newcommand{\End}{\ensuremath{\operatorname{\textnormal{End}}}}
\newcommand{\stt}{\mathbin{\tilde{*}}}
\newcommand{\Th}{\ensuremath{\textnormal{Th}}}
\newcommand{\ThJ}{\ensuremath{\Th_{\kern -0.5ex \normalJ}}}
\newcommand{\otimesJ}[2]{\ensuremath{#1 \kern-0.15ex \otimes_{\kern -1ex \normalJ} \kern-0.5ex #2}}
\newcommand{\totimes}{\mathbin{\tilde{\otimes}}}
\newcommand{\totimesJ}[2]{\ensuremath{#1 \kern-0.15ex \totimes_{\kern -1ex \normalJ} \kern-0.5ex #2}}
\newcommand{\Vfp}{\ensuremath{\V_{\kern -0.5ex fp}}}
\newcommand{\TTperpJ}{\ensuremath{\TT^\perp_{\kern-.1ex\scriptscriptstyle\J}}}
\newcommand{\TTperpJwrt}[1]{\ensuremath{\TT^\perp_{\kern-.1ex\scriptscriptstyle\J#1}}}
\newcommand{\VCATJ}{\VCAT_{\kern -0.7ex \normalJ}}
\newcommand{\PhiJ}{\Phi_{\kern -0.8ex \normalJ}}
\newcommand{\MndJ}{\Mnd_{\kern -0.8ex \normalJ}}
\newcommand{\Set}{\ensuremath{\operatorname{\textnormal{\text{Set}}}}}
\newcommand{\SET}{\ensuremath{\operatorname{\textnormal{\text{SET}}}}}
\newcommand{\Mnd}{\ensuremath{\operatorname{\textnormal{\text{Mnd}}}}}
\newcommand{\Alg}[1]{\ensuremath{#1\kern -.5ex\operatorname{\textnormal{-Alg}}}}
\newcommand{\CAlg}[1]{\ensuremath{#1\kern -.5ex\operatorname{\textnormal{-\text{CAlg}}}}}
\newcommand{\CAlgPair}[2]{\ensuremath{(#1,#2)\kern -.5ex\operatorname{\textnormal{-CPair}}}}
\newcommand{\AlgPair}[2]{\ensuremath{(#1,#2)\kern -.5ex\operatorname{\textnormal{-Pair}}}}
\newcommand{\CinftyRing}{\ensuremath{C^\infty\kern -.5ex\operatorname{\textnormal{-\text{Ring}}}}}
\newcommand{\CRProf}{\ensuremath{\operatorname{\textnormal{\text{CRProf}}}}}
\newcommand{\CRProfJ}{\ensuremath{\CRProf_{\kern -0.6ex \normalJ}}}
\newcommand{\Algs}{\ensuremath{\operatorname{\textnormal{\text{Alg}}}}}
\newcommand{\AlgsJ}{\Algs_{\kern -0.6ex \normalJ}}
\newcommand{\BifoldAlg}{\ensuremath{\operatorname{\textnormal{\text{BAlg}}}}}
\newcommand{\BifoldAlgJ}{\BifoldAlg_{\kern -0.6ex \normalJ}}
\newcommand{\Adjn}[6]{\xymatrix {#1 \ar@/_0.5pc/[rr]_{#2}^(0.4){#4}^(0.6){#5}^{\top} & & #6 \ar@/_0.5pc/[ll]_{#3}}}
\newcommand{\Equiv}[6]{\xymatrix {#1 \ar@/_0.5pc/[rr]_{#2}^(0.4){#4}^(0.6){#5}^{\sim} & & #6 \ar@/_0.5pc/[ll]_{#3}}}
\newcommand{\EquivAlt}[6]{\xymatrix {#1 \ar@{<-}@/_0.5pc/[rr]_{#3}^(0.4){#4}^(0.6){#5}^{\sim} & & #6 \ar@{<-}@/_0.5pc/[ll]_{#2}}}
\newcommand{\AlgDual}[4]{\xymatrix {#1:#2 \ar@/_0.2pc/[r] & #3:#4 \ar@/_0.2pc/[l] }}
\newcommand{\lt}{\leqslant}
\newcommand{\pushoutcorner}{\ar@{}[dr]|(.3)\ulcorner}
\newcommand{\pullbackcorner}{\ar@{}[dr]|(.3)\lrcorner}
\newcommand{\cmt}[1]{}
\newcommand{\dn}{\mathop{\downarrow}}
\newcommand{\up}{\mathop{\uparrow}}
\begin{document}

\author{\normalsize  Rory B. B. Lucyshyn-Wright\thanks{The authors acknowledge the support of the Brandon University Research Committee (BURC).  The first author acknowledges the support of the Natural Sciences and Engineering Research Council of Canada (NSERC), [funding reference numbers RGPIN-2019-05274, RGPAS-2019-00087, DGECR-2019-00273].  Le premier auteur remercie le Conseil de recherches en sciences naturelles et en génie du Canada (CRSNG) de son soutien, [numéros de référence RGPIN-2019-05274, RGPAS-2019-00087, DGECR-2019-00273].}\\
\small Brandon University, Brandon, Manitoba, Canada \medskip \\ \normalsize Darian McLaren \\
\small University of Waterloo, Waterloo, Ontario, Canada\let\thefootnote\relax\footnote{Keywords: Clone; centralizer clone; infinitary operation; infinitary clone; regular cardinal; strong limit cardinal; double centralizer clone.}\footnote{2020 Mathematics Subject Classification: 	08A40; 08A62; 08A65.}
}

\title{\large \textbf{Centralizer clones relative to a strong limit cardinal}}

\date{}

\maketitle

\abstract{
The notion of commutation of operations in universal algebra leads to the concept of \textit{centralizer clone} and gives rise to a well-known class of problems that we call \textit{centralizer problems}, in which one seeks to determine whether a given set of operations arises as a centralizer or, equivalently, coincides with its own double centralizer.  Centralizer clones and centralizer problems in universal algebra have been studied by several authors, with early contributions by Cohn, Kuznecov, Danil'\v{c}enko, and Harnau.  In this paper, we work within a generalized setting of infinitary universal algebra relative to a regular cardinal $\alpha$, thus allowing operations whose arities are sets of cardinality less than $\alpha$, and we study a notion of centralizer clone that is defined relative to $\alpha$.  In this setting, we establish several new characterizations of centralizer clones and double centralizer clones, with special attention to the case in which $\alpha$ is a strong limit cardinal, and we discuss how these results enable a novel method for treating centralizer problems.  We apply these results to establish positive solutions to finitary and infinitary centralizer problems for several specific classes of algebraic structures, including vector spaces, free actions of a group, and free actions of a free monoid.
}

\section{Introduction}\label{sec:intro}

The notion of a commuting pair of endomorphisms admits a well-known generalization in universal algebra.  Indeed, given pair of operations $f:A^n \rightarrow A$ and $g:A^m \rightarrow A$ on a set $A$, where $n$ and $m$ are finite cardinals, one says that $f$ \textit{commutes with} $g$, written $f \perp g$, if $$g\bigl(f(a_{11},...,a_{n1}),...,f(a_{1m},...,a_{nm})\bigr) = f(g(a_{11},...,a_{1m}),...,g\bigl(a_{n1},...,a_{nm})\bigr)$$
for every $n \times m$-matrix $a \in A^{n\times m}$ with entries in $A$.  Just as the notion of commutation in algebra leads to the familiar notion of centralizer in groups and rings, this notion of commuting pair of operations gives rise to a notion of centralizer in universal algebra: If $G$ is a set of (finitary) operations on a given set $A$, then the \textit{centralizer} of $G$ is the set $G^\perp$ consisting of all operations $f$ on $A$ such that $f \perp g$ for all $g \in G$ \cite[III.3]{Cohn}.  A set of operations $F$ on $A$ is the centralizer $G^\perp$ of some set of operations $G$ if and only if $F$ is its own double-centralizer $F^{\perp\perp}$.  Thus there arises a well-known class of problems, which we call \textit{centralizer problems}, that one may pose for various sets of finitary operations $F$ on various sets $A$: \textit{Determine whether $F$ is a centralizer}.

We refer the reader to \cite[\S 1]{BehrVar} for an account of the history of work on centralizers and centralizer problems in universal algebra and for comprehensive references to the early contributions of Cohn, Kuznecov, Danil'\v{c}enko, and Harnau as well as references to some more recent contributions (e.g. \cite{Hermann,MachRos}); also see \cite{BurrisWillard,Burris,Snow,SichTrn:All,SichTrn:Inf,Lu:CvxAffCmt,Lu:Cmt}.

For a set of operations $F$ on a set $A$ to be a centralizer, one basic necessary condition is that $F$ must be a (concrete) \textit{clone}, meaning that $F$ must contain the projection maps $\pi_i:A^n \rightarrow A$ and $F$ must be closed under the formation of composites $g \circ (f_1,...,f_m):A^n \rightarrow A$ of operations $g:A^m \rightarrow A$ and $f_1,...,f_m:A^n \rightarrow A$; see \cite[III.3]{Cohn}.  Clones play an important role in universal algebra, as is evident in \cite{Szend} for example, beginning with the fact that every algebra $A$ (in the universal-algebraic sense) determines a clone, consisting of the \textit{derived operations} of $A$.  In detail, an \textit{algebra} (or \textit{$\Sigma$-algebra}) in this sense is a set $A$ equipped with a family of operations $\sigma^A:A^{n_\sigma} \rightarrow A$ indexed by the elements $\sigma \in \Sigma$ of a \textit{signature} or \textit{similarity type} $\Sigma$, meaning that $\Sigma$ is a set with an assignment to each $\sigma \in \Sigma$ a finite cardinal $n_\sigma$ called the arity of $\sigma$.  The derived operations on $A$ are then the elements of the smallest clone that contains the basic operations $\sigma^A$ $(\sigma \in \Sigma)$.

Following S{\l}omi\'{n}ski \cite{Slo}, it is also possible to consider \textit{infinitary} operations, i.e. maps $f:A^J \rightarrow A$ where $J$ is an arbitrary cardinal or an arbitrary set, and consequently the notions of algebra, of clone, and of centralizer clone admit generalizations to the infinitary setting.  However, almost all existing works on centralizer clones concern \textit{finitary} clones, in which the arities $n$ are finite cardinals; exceptions include \cite{SichTrn:Inf,Lu:Cmt}, both of which are concerned with \textit{abstract} infinitary clones or \textit{Lawvere theories}.  Moreover, much of the existing work on centralizer clones concerns the case where the set $A$ itself is finite; this is partly because the works of Kuznecov, Danil'\v{c}enko, and Harnau on centralizer clones arose within the field of \textit{multi-valued logic} and are based in part on a result of Kuznecov that describes centralizer clones on finite sets $A$ in logical terms via \textit{primitive positively definable relations}; see \cite{BehrVar} for a survey.

In the present paper, we consider both infinitary and finitary centralizer clones, and we develop novel techniques and results for centralizer clones that are based on the use of infinitary clones but are applicable both to infinitary and finitary clones.

Before describing our results, let us briefly indicate the setting in which we work.  We fix a regular cardinal $\alpha$, and we consider operations $f:A^J \rightarrow A$ whose arities $J$ are sets of cardinality less than $\alpha$.  For added flexibility, but no essential gain in generality, we allow the arities $J$ to be drawn from a specified set $\Ja$ that contains all cardinals $< \alpha$ and consists only of sets of cardinality $< \alpha$.  The notion of (finitary) clone\footnote{In this paper, we allow nullary operations as in \cite[III.3]{Cohn}, while various works do not (e.g. \cite{Szend}).} as in \cite[III.3]{Cohn} then generalizes to the notion of (concrete) \textit{$\Ja$-ary clone}, and the notion of (finitary) centralizer clone as in \cite[III.3]{Cohn} generalizes to the notion of \textit{$\Ja$-ary centralizer clone} (\ref{defn:centralizer}).  Of course, finitary clones are then recovered by taking $\alpha = \aleph_0$ and taking $\Ja$ to consist of the finite cardinals.  On the other hand, another equally interesting but less familiar case is obtained by taking $\alpha$ to be an \textit{inaccessible cardinal} $\kappa$ (see e.g. \cite{Jech}) and taking $\Ja$ to consist of the sets of rank less than $\kappa$.  As we recall in \ref{para:grothu}, an equivalent way of describing this case is to let $\alpha = \kappa$ be the cardinality of a given \textit{Grothendieck universe} $\fU$ (\cite[I(1)]{Gab}, \cite[I]{Gr:SGA4-1}, \cite{Wil:GrothU}) and let $\Ja = \fU$.  Explicitly, a Grothendieck universe is a set $\fU$ whose elements are sets and which is closed under various set-theoretic constructions (which we recall in \ref{para:grothu}), in such a way that $\fU$ itself is a model of Zermelo-Fraenkel set theory.  Indeed, a common approach to the set-theoretic foundations of category theory and algebraic geometry is to take the view that the `usual' practice of algebra, analysis, and topology takes place in a given Grothendieck universe $\fU$, whose elements one calls \textit{$\fU$-small sets} or simply \textit{small sets}.  Thus, in this paper we let $\fSet$ denote a given Grothendieck universe $\fU$ whose elements we call small sets; equivalently, we take $\fSet$ to be the set of all sets of rank less than an inaccessible cardinal $\kappa$ whose existence we postulate.  By taking $\alpha = \kappa$ and $\Ja = \fSet$, we arrive at the notion of (concrete) \textit{$\fSet$-ary clone}, i.e. a clone consisting of operations $A^J \rightarrow A$ whose arities $J$ are allowed to be arbitrary small sets, and in particular we can form the \textit{$\fSet$-ary centralizer clone} of a given set of operations with small arities.  Let us mention in passing that, from a categorical point of view, the importance of (concrete) $\fSet$-ary clones lies partly in the fact that \textit{abstract} $\fSet$-ary clones may be described equivalently as arbitrary \textit{monads} on the category $\Set$ of small sets (as has been known since \cite{Lin:Eq}), which provide a very general setting for the study of categories of algebraic structures for which free algebras exist.  While in general we assume simply that $\alpha \lt \kappa$ is a regular cardinal, some of our main results require the additional assumption that $\alpha$ is a \textit{strong limit} cardinal, which is true if $\alpha = \aleph_0$ or if $\alpha = \kappa$ is inaccessible.

Our main results provide powerful new methods for solving centralizer problems in both the $\fSet$-ary context and also the traditional finitary context.  In brief, the main results of this paper comprise (1) theorems characterizing the $\Ja$-ary centralizer clone $G^\perp$ of a set of $\Ja$-ary operations $G$ on a small set $A$ (Theorems \ref{thm:charn_cc} and \ref{thm:cc_thm_for_slc}), (2) a theorem characterizing the $\Ja$-ary double centralizer clone $(\Sigma^A)^{\perp\perp}$ of an algebra $A$ of cardinality $<\alpha$ (Theorem \ref{thm:dbl_cc}), where we write $\Sigma^A$ for the set of all basic operations carried by the $\Sigma$-algebra $A$, with corollaries for the $\fSet$-ary and finitary cases (\ref{thm:fset_ary_dbl_centralizer}, \ref{thm:dbl_cc_tech_for_finitary}), and (3) several applications to specific examples of classes of algebras, including vector spaces (\S\ref{sec:vec}), actions of groups (\S\ref{sec:gset}), and actions of free monoids (\S\ref{sec:free_mon_actions}), with solutions to the $\fSet$-ary and finitary centralizer problems for various such structures.

Looking in more detail at the methods and results in this paper, one of the key ideas is that for any operation $f:A^J \rightarrow A$ on a set $A$, if the sets $A$ and $J$ both have cardinality less than a given strong limit cardinal $\alpha$, such as $\aleph_0$ or $\kappa$ as above, then the set $A^J$ also has cardinality less than $\alpha$, and we may consider operations $h:A^{(A^J)} \rightarrow A$ of arity $A^J$ and apply such operations $h$ to $f \in A^{(A^J)}$ itself.  This idea appears first in Theorems \ref{thm:charn_cc} and \ref{thm:cc_thm_for_slc}, where we establish new characterizations of the $\Ja$-ary centralizer clone that are at the technical core of the paper and enable several further results later in the paper.  In detail, we show in Theorem \ref{thm:cc_thm_for_slc} that if $G$ is a $\Ja$-ary clone on a set $A$ of cardinality $<\alpha$, for a regular strong limit cardinal $\alpha$, then the $\Ja$-ary centralizer $G^\perp$ of $G$ consists of precisely those $\Ja$-ary operations $f:A^J \rightarrow A$ such that
\begin{equation}\label{eq:intro_hf_eqn}h(f) = f(h \circ u_J)\end{equation}
for every operation of the form $h:A^{(A^J)} \rightarrow A$ in $G$, where $u_J:J \rightarrow A^{(A^J)}$ is the map that sends each $j \in J$ to the projection map $\pi_j:A^J \rightarrow A$. This result provides a substantial reduction in complexity relative to the definition of the centralizer clone and applies in particular to both $\fSet$-ary clones on a small set $A$ (\ref{thm:charn_cc_cor_set}) and finitary clones on a finite set $A$ (\ref{thm:charn_cc_cor_fin}).  Furthermore, this theorem is a corollary to a more general result that is applicable for an arbitrary set $G$ of $\Ja$-ary operations on an arbitrary small set $A$, for an arbitrary regular cardinal $\alpha$, namely Theorem \ref{thm:charn_cc}, where we show that the $\Ja$-ary centralizer $G^\perp$ of $G$ consists of precisely those $\Ja$-ary operations $f:A^J \rightarrow A$ that satisfy \eqref{eq:intro_hf_eqn} for every operation of the form $h:A^{(A^J)} \rightarrow A$ that lies within the $\fSet$-ary clone $\langle G \rangle$ generated by $G$ (i.e. the smallest $\fSet$-ary clone that contains $G$).  As a further part of Theorem \ref{thm:charn_cc}, we show also that the latter condition on $f$ is equivalent to the requirement that $f$ commute with every operation of the form $h:A^{(A^J)} \rightarrow A$ in $\langle G\rangle$.  Thus, while the definition of membership of $f:A^J \rightarrow A$ in the centralizer $G^\perp$ involves quantification over all arities $K \in \Ja$ and all $J \times K$-matrices $a \in A^{J \times K}$, Theorem \ref{thm:charn_cc} allows us to restrict our attention to just the (small) arity $A^J$ and, in fact, just the \textit{evaluation matrix} $e = (a_j)_{(j,a) \in J \times A^J} \in A^{J \times A^J}$.

These results have their greatest impact in proving our central Theorem \ref{thm:dbl_cc} on \textit{double} centralizers, in which we suppose that $\alpha$ is a regular strong limit cardinal and we show that if $A$ is an algebra of cardinality $<\alpha$ over a $\Ja$-ary signature $\Sigma$, then its $\Ja$-ary double centralizer clone $(\Sigma^A)^{\perp\perp}$ consists of precisely those $\Ja$-ary operations $f:A^J \rightarrow A$ that satisfy \eqref{eq:intro_hf_eqn} for every \textit{$\Sigma$-homomorphism} $h:A^{(A^J)} \rightarrow A$.  In particular, we thus obtain particularly useful characterizations of (1) the $\fSet$-ary double centralizer clone of a small algebra $A$ (in Corollary \ref{thm:fset_ary_dbl_centralizer}) and (2) the finitary double centralizer clone of a finite algebra $A$ (in Corollary \ref{thm:finitary_dbl_centralizer}).

These results provide a useful method for establishing positive solutions to various specific centralizer problems, as follows.  In the situation of the preceding paragraph, the \textit{$\Ja$-ary centralizer problem} for the algebra $A$ is the question of whether the $\Ja$-ary double centralizer clone $(\Sigma^A)^{\perp\perp}$ is equal to the clone of $\Ja$-ary derived operations of $A$, which we denote by $\langle \Sigma^A \rangle_\alpha$.  The latter is always contained in the former, so Theorem \ref{thm:dbl_cc} entails that a positive solution to the $\Ja$-ary centralizer problem for $A$ will be obtained as soon as we can show that for every $\Ja$-ary operation $f:A^J \rightarrow A$, if $f \notin \langle \Sigma^A\rangle_\alpha$ then there exists some $\Sigma$-homomorphism $h:A^{(A^J)} \rightarrow A$ such that $h(f) \neq f(h \circ u_J)$; see Corollary \ref{thm:method_for_showing_alg_sat}.

We apply these results and methods to provide positive solutions to $\fSet$-ary and finitary centralizer problems for various specific algebras in Sections \ref{sec:vec}, \ref{sec:gset}, and \ref{sec:free_mon_actions}, as we now summarize.

In Section \ref{sec:vec}, we begin with a basic and illustrative class of examples, showing in Theorem \ref{thm:vec_sp_thm} that if $V$ is a small, nonzero $K$-vector space over a small field $K$, then the $\fSet$-ary double centralizer clone of $V$ is equal to the clone of $\fSet$-ary derived operations of $V$.  In Theorem \ref{thm:finite_field_vec} we similarly show that if $K$ is a \textit{finite} field and $V$ is a finite-dimensional, nonzero $K$-vector space, then the finitary double centralizer clone of $V$ is equal to the clone of finitary derived operations of $V$.

In Section \ref{sec:gset}, we treat $\fSet$-ary and finitary centralizer problems for free actions of a group $G$ (also called free $G$-sets).  In Theorem \ref{thm:free_gset} we show that if $A$ is a small and free $G$-set with at least two elements, where $G$ is a small group, then the $\fSet$-ary double centralizer clone of $A$ is equal to the clone of $\fSet$-ary derived operations of $A$.  In Theorem \ref{thm:finite_gset}, we similarly show that if $G$ is a finite group and $A$ is a finite free $G$-set with at least two elements, then the finitary double centralizer clone of $A$ is equal to the clone of finitary derived operations of $A$.

In Section \ref{sec:free_mon_actions}, we treat $\fSet$-ary centralizer problems for free actions of a free monoid $S^*$ generated by a set $S$.  In Theorem \ref{thm:free_sstar_set}, we show that if $A$ is a small and free $S^*$-set with at least two elements, then the $\fSet$-ary double centralizer clone of $A$ is equal to the clone of $\fSet$-ary derived operations of $A$.

\medskip

In addition to assuming that the reader is familiar with set-theoretic concepts such as ordinals, cardinals, and Zermelo-Fraenkel set theory, we shall make some light use of basic concepts of category theory (e.g. categories, functors, products, powers, and coproducts, etcetera).  We review the notions of rank, regular cardinal, strong limit cardinal, and inaccessible cardinal in \ref{para:set_theory}.  As mentioned above, we also make use of the concept of Grothendieck universe that is frequently used in the foundations of category theory, and in \ref{para:grothu} we review this concept from a set-theoretic perspective, including its intimate relationship to inaccessible cardinals.

\section{Clones large and small}

In this section, we first discuss the set-theoretic concepts and given data with which we work throughout the paper, and then we discuss certain concepts of infinitary universal algebra relative to a regular cardinal $\alpha$, including the notion of \textit{$\Ja$-ary clone}.

\begin{para}\label{para:set_theory}
Let us begin by reviewing some well-known set-theoretic details; see, e.g. \cite{Jech}.  Let us recall that an infinite cardinal $\alpha$ is \textbf{regular} if $\sum_{i \in \gamma} \beta_i < \alpha$ for every family of cardinals $\beta_i < \alpha$ $(i \in \gamma)$ indexed by a cardinal $\gamma < \alpha$.  For example, the first infinite cardinal $\aleph_0$ is regular.  A \textbf{strong limit cardinal} is a nonzero cardinal $\alpha$ such that $2^\beta < \alpha$ for every cardinal $\beta < \alpha$ (equivalently, such that $\gamma^\beta < \alpha$ for every pair of cardinals $\beta,\gamma$ with $\beta < \alpha$ and $\gamma < \alpha$).  For example, $\aleph_0$ is a strong limit cardinal.  An \textbf{inaccessible cardinal} is a strong limit cardinal that is uncountable and regular.

By transfinite induction we may associate to each ordinal $\tau$ a set $\mathfrak{V}_\tau$ by declaring that (1) $\mathfrak{V}_0 = \emptyset$, (2) $\mathfrak{V}_{\tau+1}$ is the powerset $\sP(\mathfrak{V}_\tau)$ of $\mathfrak{V}_\tau$, and (3) $\mathfrak{V}_\tau = \bigcup_{\sigma < \tau} \mathfrak{V}_\sigma$ if $\tau$ is a limit ordinal.  It follows that if $\sigma$ and $\tau$ are ordinals with $\sigma \leq \tau$ then $\mathfrak{V}_\sigma \subseteq \mathfrak{V}_\tau$.  Given an ordinal $\sigma$, we say that a set $X$ has \textbf{rank} $\sigma$ if $X \in \mathfrak{V}_{\sigma+1}$ and $X \notin \mathfrak{V}_\sigma$ (equivalently if $X \in \mathfrak{V}_{\sigma+1}$ and $\sigma$ is minimal with this property).  The axiom of regularity in Zermelo-Fraenkel set theory entails that every set $X$ is an element of some $\mathfrak{V}_\tau$ and so has rank $\sigma$ for a unique ordinal $\sigma$ (\textbf{the rank of $X$}).  Hence, $\mathfrak{V}_\tau$ is \textbf{the set of all sets of rank less than $\tau$}.
\end{para}

\begin{para}\label{para:grothu}
A \textbf{Grothendieck universe} (\cite[I(1)]{Gab}, \cite[I]{Gr:SGA4-1}, \cite{Wil:GrothU}) is a set $\fU$ with the following properties\footnote{Some authors omit (4) and instead simply require that $\fU \neq \emptyset$, thus treating (4) as a further axiom that can be added.}: (1) Each element $X$ of $\fU$ is a subset of $\fU$; (2) for every $X \in \fU$, the powerset $\sP(X)$ and the singleton set $\{X\}$ are both elements of $\fU$, i.e. $\sP(X) \in \fU$ and $\{X\} \in \fU$; (3) $\bigcup_{i \in I} X_i \in \fU$ for every family $(X_i)_{i \in I}$ that consists of sets $X_i \in \fU$ and is indexed by a set $I \in \fU$; (4) $\fU$ contains an infinite set.

By \cite{Wil:GrothU}, a set $\fU$ is a Grothendieck universe if and only if $\fU = \mathfrak{V}_\kappa$ for some inaccessible cardinal $\kappa$, with the notation of \ref{para:set_theory}.  If $\fU$ is a Grothendieck universe, then $\fU$ is itself a model of Zermelo-Fraenkel set theory, under the usual membership relation $\in$, so that $\fU$ is actually closed under all the usual set-theoretic constructions (e.g. products, powers, disjoint unions, etcetera).  Thus, the key idea is that a Grothendieck universe $\fU$ provides a set-theoretic environment in which all of `ordinary mathematics' can be performed.
\end{para}

\begin{para}
Throughout this paper, we fix a Grothendieck universe $\fSet$ whose elements we call \textbf{small sets}, and we write $\Set$ to denote the category of small sets (in which the morphisms are all the functions between small sets), while we write $\SET$ for the category of all sets, calling a set \textbf{large} if it is not in $\fSet$.  In view of \ref{para:grothu}, it is equivalent to fix an inaccessible cardinal $\kappa$ and let $\fSet$ consist of all sets of rank less than $\kappa$, so that $\fSet = \mathfrak{V}_\kappa$.  By \cite[Exercise 6.3]{Jech}, $\kappa$ is then the cardinality of $\fSet$, so we call $\kappa$ the \textbf{cardinality of the universe of small sets, $\fSet$}, and denote it by $\kappa(\fSet)$.  Practically, the idea is that $\fSet$ (or $\Set$) is the mathematician's usual working universe of sets, while $\SET$ is a broader set-theoretic environment.
\end{para}

\begin{para}\label{para:ar}
Throughout the paper, we fix a (possibly large) regular cardinal $\alpha \leq \kappa(\fSet)$.  We say that a set $J$ is \textbf{$\alpha$-small} if it is small and the cardinality of $J$ is less than $\alpha$.  We write $\fSet_\alpha$ to denote the (possibly large) set of all $\alpha$-small sets, and we write $\mathfrak{Card}_\alpha$ to denote the set of all $\alpha$-small cardinals.  Note that since $\alpha$ is regular, if $(J_i)_{i \in I}$ is a family of $\alpha$-small sets $J_i \in \fSet_\alpha$ indexed by an $\alpha$-small set $I \in \fSet_\alpha$, then the disjoint union $\coprod_{i \in I} J_i$ is $\alpha$-small, i.e. $\coprod_{i \in I} J_i \in \fSet_\alpha$.  In particular, if $J,K \in \fSet_\alpha$ then the product $J \times K$ is a disjoint union $\coprod_{j \in J} K$ and so lies in $\fSet_\alpha$.  If $\alpha$ is also a strong limit cardinal, then $K^J \in \fSet_\alpha$ for all $J,K \in \fSet_\alpha$.

Our intention is to employ either the $\alpha$-small cardinals or the $\alpha$-small sets in general as \textit{arities} for infinitary algebra, so, to allow some flexibility in this regard, we fix a set $\Ja$ with
$$\mathfrak{Card}_\alpha \subseteq \Ja \subseteq \fSet_\alpha\;.$$
We call the sets $J \in \Ja$ the \textbf{$\alpha$-small arities}.  The full subcategories $\textnormal{Card}_\alpha$, $\Set_{\Ja}$, and $\Set_\alpha$ of $\Set$ consisting of the sets in $\mathfrak{Card}_\alpha$, $\Ja$, and $\fSet_\alpha$, respectively, are all equivalent, and at times we even take the liberty of writing as if every $\alpha$-small set $J$ lies in $\Ja$ (while strictly speaking we must choose a set in $\Ja$ equinumerous to $J$, such as the cardinality of $J$).  In particular, given $I,J,K,J_i \in \Ja$ $(i \in I)$, we shall identify the $\alpha$-small sets $\coprod_{i \in I} J_i$ and $J \times K$ with associated sets in $\Ja$, and similarly for $K^J$ if $\alpha$ is strong limit cardinal.
\end{para}

\begin{exa}[\textbf{Finite arities, finite cardinals}]
We may take $\alpha = \aleph_0$ and take either $\Ja = \fSet_{\aleph_0}$ or $\Ja = \mathfrak{Card}_{\aleph_0} = \aleph_0$.
\end{exa}

\begin{exa}[\textbf{Arbitrary small arities, small cardinals}]\label{exa:small_arities}
We may take $\alpha = \kappa(\fSet)$ to be the cardinality of the universe of small sets, $\fSet$, and take either $\Ja = \fSet$, so that $\Ja$ consists of \textit{all} small sets, or take $\Ja = \mathfrak{Card} := \mathfrak{Card}_{\kappa(\fSet)}$, so that $\Ja$ consists of all small cardinals.
\end{exa}

Note that, in the preceding two examples, $\alpha$ is not only a regular cardinal but is also a strong limit cardinal.

\begin{para}
An \textbf{($\Ja$-ary) signature} (or \textit{$\Ja$-ary similarity type}, or \textit{$\Ja$-graded set}) is a (possibly large) set $\Sigma$ equipped with a map $\mathsf{arity}:\Sigma \rightarrow \Ja$.  The elements $\sigma \in \Sigma$ are called \textit{operation symbols}, and we call the set $\mathsf{arity}(\sigma)$ the \textit{arity} of $\sigma$.  For each $J \in \Ja$ we write $\Sigma(J)$ to denote the fibre $\mathsf{arity}^{-1}(J) = \{\sigma \in \Sigma \mid \mathsf{arity}(\sigma) = J\}$. Given signatures $\Sigma$ and $\Gamma$, a \textbf{morphism of signatures} $\phi:\Sigma \rightarrow \Gamma$ is a map that commutes with the arity maps, i.e. with $\mathsf{arity}(\phi(\sigma)) = \mathsf{arity}(\sigma)$ for all $\sigma \in \Sigma$.  A signature is equivalently\footnote{up to an equivalence of categories; precisely, the above passages describe an equivalence of categories $\SET\slash\Ja \simeq \SET^{\Ja}$ between the slice category over $\Ja$ in $\SET$ and the power $\SET^\Ja$ of $\SET$ by $\Ja$.} given by a $\Ja$-indexed family of sets $\Sigma(J)$ $(J \in \Ja)$, whose associated signature in the above sense is the disjoint union $\Sigma = \coprod_{J \in \Ja} \Sigma(J)$, while a morphism of signatures $\phi:\Sigma \rightarrow \Gamma$ is equivalently given by a family of maps $\phi_J:\Sigma(J) \rightarrow \Gamma(J)$ $(J \in \Ja)$.  We shall employ these two notions of signature interchangeably throughout.
\end{para}

\begin{exa}
Upon taking $\alpha = \kappa(\fSet)$ and $\Ja = \fSet$, we find that a \textit{$\fSet$-ary signature} is a (possibly large) set $\Sigma$ equipped with a map $\mathsf{arity}:\Sigma \rightarrow \fSet$.
\end{exa}

\begin{para}\label{para:ops}
Given a set $A$, an \textit{operation} on $A$ is a map $f:A^J \rightarrow A$ for a specified set $J$ called the \textit{arity} of the operation $f$, and we say that $f$ is a \textbf{$\Ja$-ary operation} if $J \in \Ja$.  We write $\cO_A^{\,\alpha}$ to denote the signature consisting of all $\Ja$-ary operations on $A$, so that $\cO_A^{\,\alpha}(J) = A^{(A^J)}$ for all $J \in \Ja$.  Hence $\cO_A^{\,\alpha}$ is the disjoint union $\coprod_{J \in \Ja}A^{(A^J)}$ equipped with its evident arity map valued in $\Ja$.  We write simply $\cO_A$ to denote the $\fSet$-ary signature $\cO_A^{\,\kappa(\fSet)}$, so that $\cO_A(X) = A^{(A^X)}$ for every small set $X$.

We shall also make use of the fact that $\cO_A(X)$ is covariantly functorial in $X \in \Set$.  Explicitly, given any map $\zeta:X \rightarrow Z$ between small sets $X$ and $Z$, we obtain a map
$$\zeta_* = A^{A^\zeta}\;:\;\cO_A(X) = A^{(A^X)} \rightarrow \cO_A(Z) = A^{(A^Z)}$$
that sends each operation $f:A^X \rightarrow A$ to the \textbf{pushforward} $\zeta_*(f) = f \circ A^\zeta:A^Z \rightarrow A$, given by $\zeta_*(f)(a) = f(a \circ \zeta)$ $(a \in A^Z)$.
\end{para}

\begin{para}
Given any signature $\Sigma$, a \textbf{$\Sigma$-algebra} is a set $A$ equipped with a morphism of signatures $\Sigma \rightarrow \cO_A^{\,\alpha}$, whose value at each operation symbol $\sigma \in \Sigma(J)$ $(J \in \Ja)$ we write as $\sigma^A:A^J \rightarrow A$ and call a \textit{basic $\Sigma$-operation}.  $\Sigma$-algebras are the objects of a category $\Sigma\textnormal{-ALG}$ in which the morphisms are \textbf{$\Sigma$-homomorphisms}, i.e. maps that preserve the basic $\Sigma$-operations.  A \textbf{($\Ja$-ary) algebra} is a pair $(\Sigma,A)$ consisting of a signature $\Sigma$ and a $\Sigma$-algebra $A$.  A \textbf{$\Sigma$-subalgebra} of $A$ is then a subset $B$ of $A$ that is \textit{closed under} each basic $\Sigma$-operation $\sigma^A:A^J \rightarrow A$ $(J \in \Ja, \sigma \in \Sigma(J))$, meaning that $\sigma^A(b) \in B$ for every every $b \in B^J$; $B$ then acquires the structure of a $\Sigma$-algebra.  Given a subset $S$ of a $\Sigma$-algebra $A$, there is a smallest $\Sigma$-subalgebra $B$ of $A$ containing $S$, namely the intersection of all such, which we call the \textbf{$\Sigma$-subalgebra of $A$ generated by $S$}.
\end{para}

\begin{para}\label{para:set_of_jary_ops}
Given a set $A$, a \textbf{set of $\Ja$-ary operations on $A$} is a subset $F$ of $\cO_A^{\,\alpha}$.  Any set of $\Ja$-ary operations $F$ on $A$ carries the structure of a signature in such a way that the inclusion $F \hookrightarrow \cO^{\,\alpha}_A$ is a morphism of signatures.  The set $A$ then carries the structure of an $F$-algebra for this signature $F$, so that $(F,A)$ is an algebra.  Conversely, given any algebra $(\Sigma,A)$, the basic $\Sigma$-operations on $A$ constitute a set of $\Ja$-ary operations $\Sigma^A := \{\sigma^A \mid \sigma \in \Sigma\}$.
\end{para}

\begin{para}\label{para:clone}
Given a set $A$, a \textbf{(concrete) $\Ja$-ary clone} on $A$, or simply a \textbf{clone} on $A$, is a set $F$ of $\Ja$-ary operations on $A$ with the following properties: (1) $F$ contains the projection maps $\pi_j:A^J \rightarrow A$ with $J \in \Ja$ and $j \in J$, and (2) for every pair of sets $J,K \in \Ja$, every operation $f:A^K \rightarrow A$ in $F$, and every $K$-indexed family of operations $g_k:A^J \rightarrow A$ $(k \in K)$ in $F$, the composite
$$f \circ (g_k)_{k \in K} \;=\; \bigl(A^J \xrightarrow{(g_k)_{k \in K}} A^K \xrightarrow{f} A\bigr)$$
lies in $F$, where $(g_k)_{k \in K}$ denotes the map induced by the operations $g_k$.
\end{para}

\begin{rem}\label{rem:constants}
$\Ja$-ary operations on $A$ as defined in \ref{para:ops} include \textit{nullary} operations, i.e.~operations of arity $0 = \emptyset$, which are equivalently \textit{constants}, i.e.~elements of $A$.  Hence a $\Ja$-ary clone in the sense of \ref{para:clone} may contain nullary operations.
\end{rem}

\begin{exa}\label{exa:fin_cl}
The traditional notion of clone in \cite[III.3]{Cohn} is recovered in the case where $\alpha = \aleph_0$ and where $\Ja = \mathfrak{Card}_{\aleph_0} = \aleph_0$ is the set of all finite cardinals.  We therefore refer to $\aleph_0$-ary clones as \textbf{finitary clones}.  We allow nullary operations as in \cite[III.3]{Cohn}, in view of \ref{rem:constants}, while various works do not, including \cite{Szend}; see \cite{Behr:Nullary} for a discussion of the relationship between the resulting two notions of finitary clone.
\end{exa}

\begin{para}\label{para:clone_gend_by}
Given any set of $\Ja$-ary operations $F$ on a set $A$, since $A$ is canonically an $F$-algebra \pref{para:set_of_jary_ops}, we know that each power $A^X$ is an $F$-algebra (under the pointwise operations, making $A^X$ a power in the category of $\Sigma$-algebras).  In particular, the set $\cO_A(J) = A^{(A^J)}$ is an $F$-algebra for each $J \in \Ja$, with basic $F$-operations $f^{\cO_A(J)}:\cO_A(J)^K \rightarrow \cO_A(J)$ $(K \in \Ja, f \in F(K))$ given by $(g_k)_{k \in K} \mapsto f \circ (g_k)_{k \in K}$.  It is then immediate that $F$ is a clone on $A$ iff for each $J \in \Ja$ the set $F(J)$ contains the projections $\pi_j$ $(j \in J)$ and is an $F$-subalgebra of $\cO_A(J)$.

Given any set $F$ of $\Ja$-ary operations on $A$, there is a smallest clone $\langle F \rangle_\alpha$ on $A$ with $F \subseteq \langle F \rangle_\alpha$, namely the intersection of all clones $G$ on $A$ with $F \subseteq G$, and we call $\langle F \rangle_\alpha$ the \textbf{($\Ja$-ary) clone generated by $F$}.  Given any subset $B$ of $A$, the set $\{ f \in \cO_A^{\,\alpha} \mid \text{$B$ is closed under $f$}\}$ is clearly a clone on $A$.  Hence, given any set $F$ of $\Ja$-ary operations on $A$, a subset $B$ of $A$ is an $F$-subalgebra of $A$ if and only if $B$ is an $\langle F\rangle_\alpha$-subalgebra of $A$.  Using this, we can prove the following basic result:
\end{para}

\begin{prop}\label{thm:charn_clone_gend_by}
Let $F$ be a set of $\Ja$-ary operations on a set $A$.  For each $J \in \Ja$, $\langle F \rangle_\alpha(J)$ is the $F$-subalgebra of $\cO_A(J)$ generated by the projections $\pi_j:A^J \rightarrow A$ $(j \in J)$.
\end{prop}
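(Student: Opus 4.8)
The plan is to prove the two inclusions between $\langle F \rangle_\alpha(J)$ and the $F$-subalgebra $B_J$ of $\cO_A(J)$ generated by the projections $\pi_j$ $(j \in J)$, working one arity $J$ at a time but keeping in mind that the whole family must be handled simultaneously.

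For the inclusion $B_J \subseteq \langle F \rangle_\alpha(J)$: by \pref{para:clone_gend_by}, $\langle F \rangle_\alpha$ is a clone containing $F$, so by the observation recorded there, for each $J \in \Ja$ the set $\langle F \rangle_\alpha(J)$ contains all the projections $\pi_j$ $(j \in J)$ and is an $F$-subalgebra of $\cO_A(J)$ (indeed an $\langle F \rangle_\alpha$-subalgebra, hence a fortiori an $F$-subalgebra). Since $B_J$ is by definition the \emph{smallest} $F$-subalgebra of $\cO_A(J)$ containing the projections, we get $B_J \subseteq \langle F \rangle_\alpha(J)$ immediately.

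For the reverse inclusion $\langle F \rangle_\alpha(J) \subseteq B_J$, the idea is to show that the family $(B_J)_{J \in \Ja}$ is itself a clone on $A$ that contains $F$; then minimality of $\langle F \rangle_\alpha$ forces $\langle F \rangle_\alpha(J) \subseteq B_J$ for every $J$. So I would set $G := (B_J)_{J \in \Ja}$ and verify the two clone axioms of \pref{para:clone}. Axiom (1), that $G$ contains all projections, is immediate since $\pi_j \in B_J$ by construction. That $F \subseteq G$, i.e. $F(J) \subseteq B_J$ for each $J$, requires an argument: I would use the remark from \pref{para:clone_gend_by} that a subset $C$ of $\cO_A(J)$ which is an $F$-subalgebra is also an $\langle F \rangle_\alpha$-subalgebra — wait, that remark is about subalgebras of $A$, not of $\cO_A(J)$; so instead I would argue directly. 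Given $f \in F(K)$, I want $f \in B_K$; but $B_K$ contains the projections $\pi_k \in \cO_A(K)$ for $k \in K$, and applying the basic $F$-operation $f^{\cO_A(K)}$ to the family $(\pi_k)_{k \in K}$ gives $f \circ (\pi_k)_{k \in K} = f$, which lies in the $F$-subalgebra $B_K$ since $B_K$ is closed under $f^{\cO_A(K)}$. Hence $F \subseteq G$.

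The main obstacle — and the step deserving the most care — is clone axiom (2): closure of $G$ under composition. Suppose $f \in B_K$ and $g_k \in B_J$ for each $k \in K$; I must show $f \circ (g_k)_{k \in K} \in B_J$. Fix the family $(g_k)_{k \in K}$ and consider the map $\Phi_{(g_k)}: \cO_A(K) \to \cO_A(J)$ sending $h:A^K \to A$ to $h \circ (g_k)_{k \in K}:A^J \to A$; concretely, writing $g = (g_k)_{k \in K}:A^J \to A^K$, this is postcomposition, $\Phi_{(g_k)}(h) = h \circ g$. One checks that $\Phi_{(g_k)}$ is an $F$-algebra homomorphism $\cO_A(K) \to \cO_A(J)$: this is a pointwise computation using that the $F$-algebra structure on each $\cO_A(-)$ is given by composition on the left with $f$, and that left-composition with $f$ commutes with right-composition with the fixed family $g$ by associativity of composition of operations. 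Granting that $\Phi_{(g_k)}$ is an $F$-homomorphism, the preimage $\Phi_{(g_k)}^{-1}(B_J)$ is an $F$-subalgebra of $\cO_A(K)$; moreover it contains each projection $\pi_k$, because $\Phi_{(g_k)}(\pi_k) = \pi_k \circ g = g_k \in B_J$ by hypothesis. Hence $\Phi_{(g_k)}^{-1}(B_J)$ contains $B_K$ by minimality of the latter, so $f \in B_K$ gives $\Phi_{(g_k)}(f) = f \circ (g_k)_{k \in K} \in B_J$, as required. This establishes that $G$ is a clone containing $F$, whence $\langle F \rangle_\alpha(J) \subseteq B_J$, completing the proof.
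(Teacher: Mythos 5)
Your proof is correct, and its overall skeleton matches the paper's: both arguments set $G(J) := B_J$ and establish the two inclusions by showing that $(B_J)_{J\in\Ja}$ is a clone containing $F$. But the crucial step --- closure of $G$ under composition --- is handled quite differently. The paper first establishes $G \subseteq \langle F\rangle_\alpha$, then invokes the principle from \pref{para:clone_gend_by} that $F$-subalgebras coincide with $\langle F\rangle_\alpha$-subalgebras (applied to the power $\cO_A(J)$) to upgrade each $G(J)$ from an $F$-subalgebra to an $\langle F\rangle_\alpha$-subalgebra, and then restricts along $G \subseteq \langle F\rangle_\alpha$ to conclude that $G(J)$ is a $G$-subalgebra; the characterization of clones in \pref{para:clone_gend_by} then finishes the job. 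You instead prove closure directly: for a fixed family $(g_k)_{k\in K}$ with each $g_k \in B_J$, precomposition with $g = (g_k)_{k\in K}:A^J \to A^K$ is an $F$-homomorphism $\Phi_{(g_k)}:\cO_A(K)\to\cO_A(J)$, preimages of $F$-subalgebras under $F$-homomorphisms are $F$-subalgebras, and $\Phi_{(g_k)}^{-1}(B_J)$ contains the projections $\pi_k$, hence contains $B_K$. Your route is more self-contained --- it does not need the inclusion $G\subseteq\langle F\rangle_\alpha$ as an ingredient in the clone verification, and it sidesteps the (mild, but real) issue you yourself flagged, namely that the subalgebra-transfer remark in \pref{para:clone_gend_by} is stated for subsets of $A$ rather than of the power $\cO_A(J)$; the paper's use of that remark implicitly extends it to powers. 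The paper's route is shorter on the page because it reuses machinery already set up. Your direct verifications of $F\subseteq G$ (applying $f^{\cO_A(K)}$ to $(\pi_k)_{k\in K}$, whose induced map is the identity on $A^K$) and of the homomorphism property of $\Phi_{(g_k)}$ (associativity of composition) are both sound.
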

\begin{proof}
Let $G$ be the set of $\Ja$-ary operations on $A$ defined by declaring for each $J \in \Ja$ that $G(J)$ is the $F$-subalgebra of $\cO_A(J)$ generated by $\Pi_J = \{\pi_j \mid j \in J\}$.  It is then straightforward to show that $F \subseteq G$.  For each $J \in \Ja$, $\langle F\rangle_\alpha(J)$ contains $\Pi_J$ and is an $F$-subalgebra of $\cO_A(J)$ (as it is an $\langle F \rangle_\alpha$-subalgebra, by \ref{para:clone_gend_by}), so $G(J) \subseteq \langle F\rangle_\alpha(J)$.  Hence $G \subseteq \langle F \rangle_\alpha$, so since each $G(J)$ is an $\langle F\rangle_\alpha$-subalgebra of $\cO_A(J)$ by \ref{para:clone_gend_by}, we deduce that $G(J)$ is a $G$-subalgebra of $\cO_A(J)$.  Therefore $G$ is a clone on $A$, by \ref{para:clone_gend_by}, so since $F \subseteq G$ we find that $\langle F \rangle_\alpha \subseteq G$.
\end{proof}

\begin{para}
Given any algebra $(\Sigma,A)$, we may form the clone $\langle \Sigma^A \rangle_\alpha$ generated by the set of basic $\Sigma$-operations $\Sigma^A$ on $A$.  We call the elements of $\langle \Sigma^A \rangle_\alpha$ the \textbf{$(\Ja$-ary) derived $\Sigma$-operations} on $A$, and we call $\langle \Sigma^A \rangle_\alpha$ the \textbf{($\Ja$-ary) clone of derived operations} of $(\Sigma,A)$.  In particular, if $F$ is any set of $\Ja$-ary operations on a set $A$, then $(F,A)$ is an algebra \pref{para:set_of_jary_ops} whose clone of derived operations is the clone $\langle F \rangle_\alpha$ generated by $F$.
\end{para}

\section{Centralizer clones large and small}

\begin{para}\label{para:kr_pr}
Let $f:A^X \rightarrow A$ and $g:A^Y \rightarrow A$ be operations on a set $A$, where $X,Y$ are small sets.  Then the \textbf{first and second Kronecker products}\footnote{This terminology is derived from \cite{Lu:CvxAffCmt,Lu:Cmt}.} of $f$ and $g$ are the operations $f * g, f \stt g:A^{X \times Y} \rightarrow A$ defined as the composites
$$f * g = \bigl(A^{X \times Y} \xrightarrow{\sim} (A^X)^Y \xrightarrow{f^Y} A^Y \xrightarrow{g} A\bigr)$$
$$f\stt g = \bigl(A^{X \times Y} \xrightarrow{\sim} (A^Y)^X \xrightarrow{g^X} A^X \xrightarrow{f} A\bigr)$$
where $\sim$ denotes the canonical bijection.  Explicitly, given any $X\times Y$-matrix $a \in A^{X \times Y}$, if we let $\acute{a}:X \rightarrow A^Y$ and $\grave{a}:Y \rightarrow A^X$ be the maps given by $\acute{a}(x)(y) = a_{xy} = \grave{a}(y)(x)$ $(x \in X, y \in Y)$, then
$$(f*g)(a) = g(f \circ \grave{a})\;\;\text{and}\;\;(f\stt g)(a) = f(g \circ \acute{a})\;.$$
We say that $f$ \textbf{commutes with} $g$, written $f \perp g$, if $f*g = f \stt g$.  Clearly $f \perp g$ iff $g \perp f$.
\end{para}

\begin{defn}\label{defn:centralizer}
Given a set of $\Ja$-ary operations $G$ on a set $A$, we write
$$G^\perp = G^{\perp(\Ja)} = \{f \in \cO_A^{\,\alpha} \mid \forall g \in G\;:\; f \perp g\}$$
and we call $G^\perp$ the \textbf{$\Ja$-ary centralizer} (or \textbf{commutant}) of $G$.  We denote the $\Ja$-ary centralizer of $G^\perp$ by $G^{\perp\perp}$ or $G^{\perp\perp(\Ja)}$ and call it the \textbf{$\Ja$-ary double centralizer} of $G$.
\end{defn}

\begin{exa}
In the case where $\alpha = \aleph_0$ and where $\Ja = \mathfrak{Card}_{\aleph_0} = \aleph_0$ is the set of all finite cardinals, we recover the notion of centralizer defined in \cite[III.3]{Cohn}, which we call the \textbf{finitary centralizer}.  In \ref{para:set_ary_centralizer} we discuss the notion of \textbf{$\fSet$-ary centralizer} that arises by instead taking $\alpha = \kappa(\fSet)$ and $\Ja = \fSet$.
\end{exa}

\begin{para}\label{para:perp_gc}
The map $(-)^\perp$ from the powerset $\sP(\cO_A^{\,\alpha})$ to itself that is given by $G \mapsto G^\perp$ is order-reversing with respect to the inclusion order and satisfies $F \subseteq G^\perp\;\Leftrightarrow\;G \subseteq F^\perp$ for all $F,G \subseteq \cO^{\,\alpha}_A$.  It follows that the map $(-)^{\perp\perp}:\sP(\cO_A^{\,\alpha}) \rightarrow \sP(\cO_A^{\,\alpha})$ is order-preserving, idempotent, and inflationary, the latter term meaning that $F \subseteq F^{\perp\perp}$ for every $F \subseteq \cO_A^{\,\alpha}$.  It also follows that $F = F^{\perp\perp}$ iff $F = G^\perp$ for some $G \subseteq \cO_A^{\,\alpha}$.
\end{para}

\begin{para}\label{para:centralizer_clone}
Let $G$ be a set of $\Ja$-ary operations on a set $A$.  As noted in \ref{para:set_of_jary_ops}, $G$ is a $\Ja$-ary signature for which $A$ is canonically a $G$-algebra.  Hence, given any operation $f:A^J \rightarrow A$ with $J \in \Ja$, we may regard $A^J$ also as a $G$-algebra (namely the power $A^J$ in $G\textnormal{-ALG}$), and we readily deduce that  $f \in G^\perp$ iff $f:A^J \rightarrow A$ is a $G$-homomorphism.  This shows that for all $J \in \Ja$,
$$G^\perp(J) = G\textnormal{-ALG}(A^J,A)\;,$$
the set of all $G$-homomorphisms from $A^J$ to $A$.  It follows that $G^\perp$ is a $\Ja$-ary clone, which we therefore also call the \textbf{$\Ja$-ary centralizer clone} of $G$.  In particular, the $\Ja$-ary double centralizer $G^{\perp\perp}$ is a clone, and $G \subseteq G^{\perp\perp}$ by \ref{para:perp_gc}, so $\langle G\rangle_\alpha \subseteq G^{\perp\perp}$ and hence $G^\perp \subseteq \langle G\rangle_\alpha^\perp$ by \ref{para:perp_gc}, while the opposite inclusion $\langle G\rangle_\alpha^\perp \subseteq G^\perp$ holds simply because $G \subseteq \langle G \rangle_\alpha$.  This shows that
$$G^\perp = \langle G\rangle_\alpha^\perp\;.$$
\end{para}

\begin{para}\label{para:cc_of_alg}
Given a $\Ja$-ary algebra $(\Sigma,A)$, the the \textbf{$\Ja$-ary centralizer clone} of $(\Sigma,A)$ is the $\Ja$-ary centralizer $(\Sigma^A)^\perp$ of the set $\Sigma^A$ of basic $\Sigma$-operations of $A$.  Hence, $(\Sigma^A)^\perp(J) = \Sigma\textnormal{-ALG}(A^J,A)$ for each $J \in \Ja$ by \ref{para:centralizer_clone}.  We also call the $\Ja$-ary double centralizer $(\Sigma^A)^{\perp\perp}$ the \textbf{$\Ja$-ary double centralizer of $(\Sigma,A)$}.  By \ref{para:centralizer_clone}, $(\Sigma^A)^\perp$ is equally the centralizer $\langle \Sigma^A\rangle_\alpha^\perp$ of the $\Ja$-ary clone of operations $\langle \Sigma^A\rangle_\alpha$ of $(\Sigma,A)$, and hence $(\Sigma^A)^{\perp\perp} = \langle \Sigma^A\rangle_\alpha^{\perp\perp}$, so in particular $\langle \Sigma^A\rangle_\alpha \subseteq (\Sigma^A)^{\perp\perp}$.
\end{para}

\section{$\fSet$-ary extension and $\fSet$-ary centralizer clones}

In view of \ref{exa:small_arities}, the notion of \textit{$\fSet$-ary clone} is the special case of the notion of (concrete) clone in \ref{para:clone} that is obtained by considering arbitrary small sets as arities.  In the present section, we discuss certain interactions of the notions of $\fSet$-ary clone and $\Ja$-ary clone that enable the latter to be studied in terms of the former.

\begin{para}\label{para:restn}
Given a $\fSet$-ary clone $F \subseteq \cO_A$, we write $F|_\Ja$ to denote the intersection of $F$ with $\cO_A^{\,\alpha}$, so that $F|_\Ja$ is a set of $\Ja$-ary operations defined by $F|_\Ja(J) = F(J)$ $(J \in \Ja)$.  Clearly $F|_\Ja$ is a $\Ja$-ary clone, which we call $F|_\Ja$ the \textbf{($\Ja$-ary) restriction} of $F$.
\end{para}

\begin{para}
Given any (possibly large) set of $\fSet$-ary operations $F \subseteq \cO_A$ on a set $A$, we write $\langle F\rangle$ to denote the $\fSet$-ary clone generated by $F$ \pref{para:clone_gend_by}.  In particular, given a set of $\Ja$-ary operations $F \subseteq \cO_A^{\,\alpha}$ on $A$, we may regard $F \subseteq \cO_A$ in particular as a set of $\fSet$-ary operations on $A$ (with $F(X) = \emptyset$ for every small set $X \notin \Ja$), and so we may consider the $\fSet$-ary clone $\langle F \rangle$ generated by $F$.
\end{para}

\begin{para}\label{para:cl_pf}
If $f:A^Y \rightarrow A$ is an element of a given set of $\fSet$-ary operations $F \subseteq \cO_A$ and $\xi:Y \rightarrow X$ is any map, then the pushforward $\xi_*(f) = f \circ A^\xi:A^X\rightarrow  A$
is an element of the $\fSet$-ary clone $\langle F\rangle$ generated by $F$, since $A^\xi:A^X \rightarrow A^Y$ is the map induced by the projections $\pi_{\xi(y)}:A^X \rightarrow A$ $(y \in Y)$, which are elements of $\langle F \rangle$.  If $F$ happens to be a $\Ja$-ary clone, then $\langle F\rangle$ actually consists entirely of these pushforwards $\xi_*(f)$, as a consequence of the following (more general) result:
\end{para}

\begin{prop}
Let $F$ be a set of $\Ja$-ary operations on a set $A$.  Then the $\fSet$-ary clone $\langle F\rangle$ generated by $F$ consists of the operations obtained as pushforwards
\begin{equation}\label{eq:subn}\xi_*(g) = \bigl(A^X \xrightarrow{A^\xi} A^J \xrightarrow{g} A\bigr)\end{equation}
of operations $g \in \langle F\rangle_\alpha(J)$ along maps $\xi:J \rightarrow X$ with $J \in \Ja$ and $X \in \fSet$, where $\langle F\rangle_\alpha$ is the $\Ja$-ary clone generated by $F$.  That is,
$$\langle F\rangle(X) = \{\xi_*(g) \mid J \in \Ja,\,\xi \in X^J,\, g \in \langle F \rangle_\alpha(J)\}$$
for each small set $X$.
\end{prop}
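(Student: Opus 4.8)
The plan is to show that the set
$$
P(X) \;=\; \{\xi_*(g) \mid J \in \Ja,\ \xi \in X^J,\ g \in \langle F\rangle_\alpha(J)\}
\qquad (X \in \fSet)
$$
defines a $\fSet$-ary clone $P$ on $A$ with $F \subseteq P \subseteq \langle F\rangle$; since $\langle F\rangle$ is the smallest $\fSet$-ary clone containing $F$, this forces $P = \langle F\rangle$. The inclusion $P \subseteq \langle F\rangle$ is already essentially recorded in \ref{para:cl_pf}: each $g \in \langle F\rangle_\alpha(J)$ lies in $\langle F\rangle$ (as $\langle F\rangle_\alpha \subseteq \langle F\rangle$ because $\langle F\rangle|_\Ja$ is a $\Ja$-ary clone containing $F$), and $\xi_*(g)$ is then a pushforward of an element of $\langle F\rangle$, hence lies in $\langle F\rangle$ by \ref{para:cl_pf}. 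The inclusion $F \subseteq P$ is immediate: for $f \in F(J)$, take $\xi = \id_J$, noting $F(J) \subseteq \langle F\rangle_\alpha(J)$.

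The substantive content is that $P$ is a $\fSet$-ary clone. The projections are easy: $\pi_x \colon A^X \to A$ for $x \in X$ is $\xi_*(\pi_j)$ where $j \in J$ is a chosen singleton-arity element and $\xi \colon J \to X$ picks out $x$ — more cleanly, using a one-element set $1 \in \Ja$, the unique identity-type operation $\id_A$ on $A^1$ pushes forward along $x \colon 1 \to X$ to $\pi_x$. For closure under composition, suppose we are given $\xi_*(g) \in P(Y)$ with $g \in \langle F\rangle_\alpha(K)$, $\xi \colon K \to Y$, and for each $y \in Y$ an operation $(\eta_y)_*(h_y) \in P(X)$ with $h_y \in \langle F\rangle_\alpha(L_y)$, $\eta_y \colon L_y \to X$. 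We must exhibit the composite $\xi_*(g) \circ ((\eta_y)_*(h_y))_{y \in Y} \colon A^X \to A$ in the form $\zeta_*(g')$. The natural candidate arity is the disjoint union $L = \coprod_{y \in Y} L_y$, which lies in $\fSet_\alpha$ (hence may be identified with a set in $\Ja$) precisely because $\alpha$ is regular and $Y$ here should be taken $\alpha$-small — and indeed it is, since $\xi$ factors through a set of cardinality $\le \lvert K\rvert < \alpha$, so without loss of generality $Y \in \Ja$. The map $\zeta \colon L \to X$ is the copairing $[\eta_y]_{y \in Y}$, and the operation $g' \in \langle F\rangle_\alpha(L)$ is the clone-composite of $g \in \langle F\rangle_\alpha(K)$ with the family of operations $\big(\lambda_y^* h_y\big)_{?}$ — more precisely one composes $g$ with, for each $k \in K$, the operation $h_{\xi(k)}$ precomposed with the coproduct injection $L_{\xi(k)} \hookrightarrow L$, which is a pushforward of $h_{\xi(k)} \in \langle F\rangle_\alpha(L_{\xi(k)})$ along an injection and so lies in $\langle F\rangle_\alpha(L)$. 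That this $g'$ is in $\langle F\rangle_\alpha(L)$ uses that $\langle F\rangle_\alpha$ is a $\Ja$-ary clone and that $L \in \Ja$. Finally one checks $\zeta_*(g') = \xi_*(g) \circ ((\eta_y)_*(h_y))_{y \in Y}$ by unwinding both sides: on a matrix $a \in A^X$, both evaluate to $g$ applied to the $K$-tuple whose $k$-th entry is $h_{\xi(k)}$ applied to $a \circ \eta_{\xi(k)}$. This is a routine but slightly fiddly diagram chase with indexed (co)products.

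The main obstacle I expect is the bookkeeping around the reduction "$Y$ may be taken $\alpha$-small". A priori, a $\fSet$-ary composite involves a family of inner operations $(g_y)_{y \in Y}$ indexed by an arbitrary small set $Y = K$ — but here each inner operation is itself a pushforward $\xi_*(g)$, and the point is that $\xi_*(g)$ factors as $A^Y \to A^K \to A$ with $K \in \Ja$, so the outer composite only "sees" the $\alpha$-small arity $K$; one then replaces $Y$ by $K$ via the factorization before forming the disjoint union, ensuring $\coprod_{k\in K} L_k \in \fSet_\alpha$ by regularity of $\alpha$ (as in \ref{para:ar}). Care is also needed to stay within $\Ja$ rather than $\fSet_\alpha$, but this is handled exactly by the convention in \ref{para:ar} of silently identifying $\alpha$-small sets with equinumerous members of $\Ja$. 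Once this is set up, verifying that $P(X)$ is closed under the clone operations is straightforward, and the three inclusions $F \subseteq P \subseteq \langle F\rangle$ together with minimality of $\langle F\rangle$ close the argument.
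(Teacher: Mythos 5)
Your proposal is correct and follows essentially the same route as the paper: define $P$ as the set of pushforwards, verify $F \subseteq P \subseteq \langle F\rangle$, and show $P$ is a $\fSet$-ary clone, with the crucial step being to re-index the disjoint union of inner arities by the $\alpha$-small set $K$ through which the outer operation factors (rather than by the arbitrary small outer arity), so that regularity of $\alpha$ keeps the combined arity in $\Ja$. The paper's proof executes exactly this reindexing, writing the composite as $g \circ \bigl(h'_{\zeta(k)}\bigr)_{k \in K} \circ A^\xi$ with $J = \coprod_{k \in K} J_{\zeta(k)} \in \Ja$.
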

\begin{proof}
Let $P$ denote the set of all $\fSet$-ary operations of the form \eqref{eq:subn}.  Then clearly $F \subseteq P$.  For each small set $X$, $P$ contains the projections $\pi_x:A^X \rightarrow A$ with $x \in X$ (since we may take $J = 1 \in \Ja$, $\xi = x$, and $g = \pi_0 = 1_A:A^1 = A \rightarrow A$).  To show that $P$ is, moreover, a $\fSet$-ary clone, let $p:A^Z \rightarrow A$ and $q_z:A^X \rightarrow A$ $(z \in Z)$ be elements of $P$, where $X$ and $Z$ are small sets; we must show that $p \circ (q_z)_{z \in Z}:A^X \rightarrow A$ lies in $P$.  There exist $K \in \Ja$, $\zeta \in Z^K$, and an operation $g:A^K \rightarrow A$ in $\langle F\rangle_\alpha$ with $p = g \circ A^\zeta$, and for each $z \in Z$ we may choose $J_z \in \Ja$, $\xi_z \in X^{J_z}$, and an operation $h_z:A^{J_z} \rightarrow A$ in $\langle F\rangle_\alpha$ with $q_z = h_z \circ A^{\xi_z}$.  Since $\alpha$ is a regular cardinal, there is a coproduct $J := \coprod_{k \in K}J_{\zeta(k)}$ in $\Set_{\Ja}$ that is preserved by the inclusion $\Set_{\Ja} \hookrightarrow \Set$ \pref{para:ar}, and we write $\iota_k:J_{\zeta(k)} \rightarrow J$ for the injections $(k \in K)$.  Hence we may consider the map $\xi:J \rightarrow X$ induced by the maps $\xi_{\zeta(k)}:J_{\zeta(k)} \rightarrow X$, so that $\xi \circ \iota_k = \xi_{\zeta(k)}$ $(k \in K)$.  For each $k \in K$ we obtain an operation $h'_{\zeta(k)} := h_{\zeta(k)} \circ A^{\iota_k}:A^J \rightarrow A$, which is an element of the clone $\langle F\rangle_\alpha$ since $h_{\zeta(k)}$ is so.   The composite $p \circ (q_z)_{z \in Z}:A^X \rightarrow A$ can be expressed also as the composite
$$A^X \overset{A^\xi}{\longrightarrow} A^J \xrightarrow{\left(h'_{\zeta(k)}\right)_{k \in K}} A^K \overset{g}{\longrightarrow} A\;,$$
which is an element of $P$ since $g \circ \left(h'_{\zeta(k)}\right)_{k \in K}$ is an element of the clone $\langle F\rangle_\alpha$.  This shows that $P$ is a $\fSet$-ary clone with $F \subseteq P$, so that $\langle F \rangle \subseteq P$.  Also, by \ref{para:restn}, $\langle F\rangle|_\Ja$ is a $\Ja$-ary clone and contains $F$, so that $\langle F\rangle_\alpha \subseteq \langle F \rangle|_\Ja$ and hence $\langle F\rangle_\alpha \subseteq \langle F \rangle$, which entails that each pushforward \eqref{eq:subn} is an element of the clone $\langle F \rangle$ , so that $P \subseteq \langle F \rangle$.
\end{proof}

\begin{cor}
Given any set of $\Ja$-ary operations $F$ on a set $A$, the $\Ja$-ary clone $\langle F\rangle_\alpha$ generated by $F$ is the $\Ja$-ary restriction $\langle F\rangle|_\Ja$  of the $\fSet$-ary clone $\langle F\rangle$ generated by $F$.
\end{cor}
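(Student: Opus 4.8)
The plan is to derive this immediately from the preceding Proposition, together with an observation that was in effect already made in the course of its proof. Recall from that proof that $\langle F\rangle_\alpha \subseteq \langle F\rangle|_\Ja$: indeed, by \pref{para:restn} the set $\langle F\rangle|_\Ja$ is a $\Ja$-ary clone, and it contains $F$ because $\langle F\rangle$ does (and $F \subseteq \cO_A^{\,\alpha}$), whence $\langle F\rangle_\alpha \subseteq \langle F\rangle|_\Ja$ by minimality of $\langle F\rangle_\alpha$. So the only thing that remains is the reverse inclusion $\langle F\rangle|_\Ja \subseteq \langle F\rangle_\alpha$.

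For this I would instantiate the formula of the preceding Proposition at a small set $X$ lying in $\Ja$: for each $J \in \Ja$ it gives
$$\langle F\rangle|_\Ja(J) = \langle F\rangle(J) = \{\xi_*(g) \mid K \in \Ja,\ \xi \in J^K,\ g \in \langle F\rangle_\alpha(K)\}\;.$$
Thus it suffices to check that each such pushforward $\xi_*(g) = g \circ A^\xi : A^J \rightarrow A$ already lies in the $\Ja$-ary clone $\langle F\rangle_\alpha$. But $A^\xi : A^J \rightarrow A^K$ is exactly the map induced by the projections $\pi_{\xi(k)} : A^J \rightarrow A$ $(k \in K)$, each of which belongs to $\langle F\rangle_\alpha(J)$; since $g \in \langle F\rangle_\alpha(K)$ and $\langle F\rangle_\alpha$ is a clone, the composite $g \circ (\pi_{\xi(k)})_{k \in K} = \xi_*(g)$ lies in $\langle F\rangle_\alpha(J)$. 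Hence $\langle F\rangle|_\Ja(J) \subseteq \langle F\rangle_\alpha(J)$ for every $J \in \Ja$, i.e. $\langle F\rangle|_\Ja \subseteq \langle F\rangle_\alpha$, and combining the two inclusions yields $\langle F\rangle_\alpha = \langle F\rangle|_\Ja$.

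I do not expect any genuine obstacle here: the substance has already been established in proving the Proposition, and the one point needing (minimal) care is the remark that pushing a $\Ja$-ary operation forward along a map whose codomain again lies in $\Ja$ stays within the $\Ja$-ary clone — which is immediate from the clone axioms, since such a pushforward is a composite of a clone element with projections. (One could equally give a self-contained proof not invoking the Proposition, by showing directly that $\langle F\rangle|_\Ja$ is a $\Ja$-ary clone containing $F$ and that the assignment $X \mapsto \{\xi_*(g) \mid J \in \Ja,\ \xi \in X^J,\ g \in \langle F\rangle_\alpha(J)\}$ is a $\fSet$-ary clone containing $F$; but since the Proposition already packages precisely this information, routing through it is the cleanest path.)
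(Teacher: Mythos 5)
Your proof is correct and follows the route the paper intends: the corollary is stated without proof precisely because it falls out of the preceding Proposition by specializing $X$ to an arity $J \in \Ja$, together with the observation (already recorded at the end of the Proposition's proof) that $\langle F\rangle|_\Ja$ is a $\Ja$-ary clone containing $F$. Your one added check --- that a pushforward $\xi_*(g) = g \circ (\pi_{\xi(k)})_{k \in K}$ with $J, K \in \Ja$ stays inside the $\Ja$-ary clone by the clone composition axiom --- is exactly the point that makes the specialization work.
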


\begin{cor}\label{thm:extn}
Let $F$ be a $\Ja$-ary clone on a set $A$. Then the $\fSet$-ary clone $\langle F \rangle$ consists of the composites $g \circ A^\xi$ with $J \in \Ja$, $\xi \in X^J$, and $g \in F(J)$.  In particular, $F$ is the $\Ja$-ary restriction $\langle F \rangle|_\Ja$ of $\langle F\rangle$.
\end{cor}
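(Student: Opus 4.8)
The plan is to deduce both assertions directly from the preceding Proposition and its first Corollary, specialising to the case where $F$ is already closed. First I would note that, since $F$ is assumed to be a $\Ja$-ary clone, it coincides with the smallest $\Ja$-ary clone containing it, so $\langle F\rangle_\alpha = F$. Substituting this into the formula $\langle F\rangle(X) = \{\xi_*(g) \mid J\in\Ja,\ \xi\in X^J,\ g\in\langle F\rangle_\alpha(J)\}$ supplied by the Proposition, and recalling from \ref{para:ops} and \ref{para:cl_pf} that the pushforward $\xi_*(g)$ is by definition the composite $g\circ A^\xi$, yields exactly the stated description of $\langle F\rangle(X)$ for every small set $X$.

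For the second assertion, the shortest route is to invoke the first Corollary, which gives $\langle F\rangle_\alpha = \langle F\rangle|_\Ja$ for any set of $\Ja$-ary operations $F$; combined with $\langle F\rangle_\alpha = F$ this gives $F = \langle F\rangle|_\Ja$. Alternatively, one can argue in place: the inclusion $F(J)\subseteq\langle F\rangle(J)$ is immediate, and conversely any element of $\langle F\rangle(J)$ for $J\in\Ja$ is of the form $g\circ A^\xi$ with $K\in\Ja$, $\xi\in J^K$, and $g\in F(K)$; since $A^\xi:A^J\to A^K$ is the map induced by the projections $\pi_{\xi(k)}:A^J\to A$ $(k\in K)$, which lie in $F$ because $F$ is a clone, the composite $g\circ A^\xi = g\circ(\pi_{\xi(k)})_{k\in K}$ lies in $F(J)$ by the composition-closure clause of \ref{para:clone}. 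Hence $\langle F\rangle(J) = F(J)$ for every $J\in\Ja$, i.e. $\langle F\rangle|_\Ja = F$.

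I expect no real obstacle here: the substantive work — in particular the use of regularity of $\alpha$ to form the coproduct $\coprod_{k\in K}J_{\zeta(k)}$ in $\Set_{\Ja}$ — has already been carried out in the proof of the Proposition, and this Corollary is purely a matter of unwinding the definitions in the special case of a clone. The only points requiring a moment's care are the bookkeeping identity $\xi_*(g) = g\circ A^\xi$ and the observation that $A^\xi$ is induced by projections belonging to $F$; both are recorded in \ref{para:ops}, \ref{para:cl_pf}, and \ref{para:clone}.
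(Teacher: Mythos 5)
Your proposal is correct and follows exactly the route the paper intends: since $F$ is already a $\Ja$-ary clone, $\langle F\rangle_\alpha = F$, and both assertions drop out of the preceding Proposition and its first Corollary by substitution (the paper gives no separate proof for precisely this reason). Your alternative in-place verification of $\langle F\rangle|_\Ja = F$ via closure under composition with projections is also sound, though unnecessary given the first Corollary.
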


\begin{para}
Given a $\Ja$-ary clone $F$ on a set $A$, we call $\langle F \rangle$ the \textbf{($\fSet$-ary) extension} of $F$.  By \ref{thm:extn}, if we write $\Ja\textnormal{-Clone}(A)$ for set of all $\Ja$-ary clones on $A$, then the map $\Ja\textnormal{-Clone}(A) \rightarrow \fSet\textnormal{-Clone}(A)$ given by extension is a section of the map $\fSet\textnormal{-Clone}(A)\rightarrow \Ja\textnormal{-Clone}(A)$ given by restriction.
\end{para}

\begin{para}\label{para:set_ary_centralizer}
As a special case of \ref{defn:centralizer}, if $F$ is any set of $\fSet$-ary operations $F \subseteq \cO_A$ on a set $A$, then we can consider the $\fSet$-ary centralizer clone $F^{\perp(\fSet)}$ of $F$, as well as the $\fSet$-ary double centralizer clone $F^{\perp\perp(\fSet)}$, noting that $\langle F \rangle^{\perp(\fSet)} = F^{\perp(\fSet)}$ by \ref{para:centralizer_clone}.  Any set $F$ of $\Ja$-ary operations $F \subseteq \cO_A^{\,\alpha}$ is in particular a set of $\fSet$-ary operations, so we may again form the $\fSet$-ary centralizer clone $F^{\perp(\fSet)}$, whose $\Ja$-ary restriction is clearly the $\Ja$-ary centralizer $F^\perp$.
\end{para}

\section{A characterization of the centralizer clone}

In this section, we introduce one of the central techniques of this paper, which involves considering $\fSet$-ary operations of the form $A^{(A^X)} \rightarrow A$ for small sets $A$ and $X$, and we use this technique to prove a characterization of centralizers that we apply in later sections.  We begin with the following key observation:

\begin{prop}\label{thm:ev_matr}
Let $f:A^X \rightarrow A$ and $h:A^{(A^X)} \rightarrow A$ be operations, for small sets $A$ and $X$.  Let $f*h,f\stt h:A^{X \times A^X} \rightarrow A$ denote the first and second Kronecker products of $f$ and $h$ \pref{para:kr_pr}, and let $e \in A^{X \times A^X}$ be the \textbf{evaluation matrix}, $e = (a(x))_{(x,a) \in X \times A^X}$.  Then
$$(f*h)(e) = h(f)\;\;\;\;\text{and}\;\;\;\;(f\stt h)(e) = f(h \circ u_X)$$
where $u_X:X \rightarrow A^{(A^X)}$ is the map that sends each $x \in X$ to the projection $\pi_x$.
\end{prop}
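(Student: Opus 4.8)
The plan is to unwind the explicit description of the two Kronecker products recorded in \pref{para:kr_pr}, specialized to the case $Y = A^X$ and $g = h$, and then to identify the two ``transpose'' maps that the evaluation matrix $e$ determines. Concretely, applying \pref{para:kr_pr} with $Y = A^X$, the matrix $e \in A^{X \times A^X}$ gives rise to maps $\acute{e}:X \rightarrow A^{(A^X)}$ and $\grave{e}:A^X \rightarrow A^X$ characterized by $\acute{e}(x)(a) = e_{x,a} = \grave{e}(a)(x)$ for all $x \in X$ and $a \in A^X$, and we have $(f * h)(e) = h(f \circ \grave{e})$ while $(f \stt h)(e) = f(h \circ \acute{e})$.

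The next step is simply to substitute the defining formula $e_{x,a} = a(x)$ for the evaluation matrix. From $\grave{e}(a)(x) = a(x)$ for all $x \in X$ we get $\grave{e}(a) = a$ for all $a \in A^X$, i.e.\ $\grave{e} = \id_{A^X}$; and from $\acute{e}(x)(a) = a(x) = \pi_x(a)$ for all $a \in A^X$ we get $\acute{e}(x) = \pi_x = u_X(x)$ for all $x \in X$, i.e.\ $\acute{e} = u_X$. Plugging these two identities into the formulas above yields $(f * h)(e) = h(f \circ \id_{A^X}) = h(f)$ and $(f \stt h)(e) = f(h \circ u_X)$, which is exactly the claim.

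There is no real obstacle here beyond bookkeeping: the only point demanding attention is keeping straight which of $\acute{e}$ and $\grave{e}$ feeds into which of $f * h$ and $f \stt h$, and which of the two indexing sets $X$ and $A^X$ is being ``curried'' in each case, so that the relevant transpose of the evaluation matrix is correctly recognized as $\id_{A^X}$ in one case and as the tuple of projections $u_X$ in the other. Once the conventions of \pref{para:kr_pr} are respected, the computation is immediate.
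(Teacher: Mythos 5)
Your proof is correct and coincides with the paper's own argument: both identify $\grave{e} = 1_{A^X}$ and $\acute{e} = u_X$ from the defining formula $e_{x,a} = a(x)$ and then substitute into the explicit formulas for the Kronecker products from \pref{para:kr_pr}. No differences worth noting.
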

\begin{proof}
With the notation of \ref{para:kr_pr}, $\acute{e} = u_X:X \rightarrow A^{(A^X)}$ and $\grave{e} = 1_{A^X}:A^X \rightarrow A^X$, so
$$(f*h)(e) = h(f \circ \grave{e}) = h(f)\;\;\text{and}\;\;(f\stt h)(e) = f(h \circ \acute{e}) = f(h \circ u_X)\;.$$
\end{proof}

The preceding observation is complemented by the following important result, which entails that we can encode Kronecker products with a fixed operation $g:A^Y \rightarrow A$ in terms of a family of pushforward operations of the form $A^{(A^X)} \rightarrow A$:

\begin{prop}\label{thm:pf_enc_first_kp}
Let $A$, $X$, $Y$ be small sets, let $g:A^Y \rightarrow A$ be an operation, let $a \in A^{X \times Y}$, and write
$$h_{ga} := \grave{a}_*(g) \;:\;A^{(A^X)} \longrightarrow A$$
to denote the pushforward of $g$ along the map $\grave{a}:Y \rightarrow A^X$.  Then
$$(f*g)(a) = h_{ga}(f) = (f * h_{ga})(e)$$
$$(f\stt g)(a) = f(h_{ga} \circ u_X) = (f \stt h_{ga})(e)$$
for every operation $f:A^X \rightarrow A$, where $e \in A^{X \times A^X}$ is the evaluation matrix and $u_X:X \rightarrow A^{(A^X)}$ is the map defined in \ref{thm:ev_matr}.
\end{prop}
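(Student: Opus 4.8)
The plan is to reduce the whole statement to two ingredients: the explicit descriptions of the two Kronecker products in \pref{para:kr_pr}, together with the definition of pushforward in \pref{para:ops}; and Proposition \ref{thm:ev_matr}. Indeed, the two rightmost equalities, $h_{ga}(f) = (f * h_{ga})(e)$ and $f(h_{ga} \circ u_X) = (f \stt h_{ga})(e)$, are immediate instances of Proposition \ref{thm:ev_matr} applied to the operations $f : A^X \to A$ and $h := h_{ga} : A^{(A^X)} \to A$, so no work is needed there. Thus everything comes down to the two leftmost equalities, $(f*g)(a) = h_{ga}(f)$ and $(f \stt g)(a) = f(h_{ga} \circ u_X)$.

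For the first of these, I would unwind both sides directly. By \pref{para:kr_pr} we have $(f*g)(a) = g(f \circ \grave{a})$, where $\grave{a} : Y \to A^X$; and by the definition of the pushforward in \pref{para:ops}, $h_{ga}(f) = \grave{a}_*(g)(f) = g(f \circ \grave{a})$, where on the left $f$ is viewed as an element of $A^{(A^X)}$. Hence the two sides literally coincide. For the second equality, \pref{para:kr_pr} gives $(f \stt g)(a) = f(g \circ \acute{a})$ with $\acute{a} : X \to A^Y$, so it suffices to check that $h_{ga} \circ u_X = g \circ \acute{a}$ as elements of $A^X$. The one computation worth recording is the identity $\pi_x \circ \grave{a} = \acute{a}(x)$ in $A^Y$ for each $x \in X$, which is exactly the defining relation $\acute{a}(x)(y) = a_{xy} = \grave{a}(y)(x)$; with it one has
$$(h_{ga} \circ u_X)(x) = h_{ga}(\pi_x) = g(\pi_x \circ \grave{a}) = g(\acute{a}(x)) = (g \circ \acute{a})(x),$$
and therefore $f(h_{ga} \circ u_X) = f(g \circ \acute{a}) = (f \stt g)(a)$, as required.

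I do not anticipate a genuine obstacle here; the proof is a matter of carefully transporting definitions. The only point demanding care is keeping the two transposes $\acute{(-)}$ and $\grave{(-)}$ straight and remembering that $h_{ga} = \grave{a}_*(g)$ is built from $\grave{a}$, so that applying \ref{thm:ev_matr} to $h_{ga}$ pairs $f * h_{ga}$ with the evaluation matrix on the side corresponding to $f * g$ (via $f \circ \grave{a}$) and $f \stt h_{ga}$ on the side corresponding to $f \stt g$ (via $g \circ \acute{a}$).
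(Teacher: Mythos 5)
Your proposal is correct and follows essentially the same route as the paper's proof: both reduce the outer equalities to Proposition \ref{thm:ev_matr} and verify the inner ones by unwinding the pushforward ($h_{ga}(f) = g(f \circ \grave{a})$) and the pointwise identity $\pi_x \circ \grave{a} = \acute{a}(x)$, giving $h_{ga} \circ u_X = g \circ \acute{a}$. No issues.
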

\begin{proof}
Firstly, $(f * h_{ga})(e) = h_{ga}(f) = g(f \circ \grave{a}) = (f*g)(a)$ by \ref{thm:ev_matr} and \ref{para:kr_pr}.  Secondly, by \ref{thm:ev_matr}, $(f \stt h_{ga})(e) = f(h_{ga} \circ u_X)$, but $(h_{ga} \circ u_X)(x) = h_{ga}(\pi_x) = g(\pi_x \circ \grave{a}) = g(\acute{a}(x))$ for all $x \in X$ and hence $h_{ga} \circ u_X = g \circ \acute{a}$, so $(f \stt h_{ga})(e) = f(h_{ga} \circ u_X) = f(g \circ \acute{a}) = (f\stt g)(a)$ by \ref{para:kr_pr}.
\end{proof}

\begin{thm}\label{thm:charn_cc}
Let $G$ be a set of $\Ja$-ary operations on a small set $A$, and let \mbox{$f:A^J \rightarrow A$} be an operation with $J \in \Ja$.  The following are equivalent:
\begin{enumerate}
\item $f \in G^\perp$, i.e. $f$ is an element of the $\Ja$-ary centralizer clone of $G$;
\item $h(f) = f(h \circ u_J)$ for every operation $h:A^{(A^J)} \rightarrow A$ of arity $A^J$ in the $\fSet$-ary clone $\langle G\rangle$ generated by $G$, where $u_J:J \rightarrow A^{(A^J)}$ is defined as in \ref{thm:ev_matr};
\item $f$ commutes with every operation $h:A^{(A^J)} \rightarrow A$ of arity $A^J$ in $\langle G\rangle$.
\end{enumerate}
\end{thm}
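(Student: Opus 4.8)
The plan is to prove the cyclic chain of implications $(1)\Rightarrow(3)\Rightarrow(2)\Rightarrow(1)$, with Propositions \ref{thm:ev_matr} and \ref{thm:pf_enc_first_kp} doing essentially all of the real work. The one conceptual point is that although the arity $A^J$ of the operations $h$ appearing in (2) and (3) need not be $\alpha$-small — it is so only when $\alpha$ is a strong limit cardinal — it is always a small set, so such $h$ are $\fSet$-ary operations; to accommodate them we must work with the $\fSet$-ary clone $\langle G\rangle$ rather than with $\langle G\rangle_\alpha$. I expect $(1)\Rightarrow(3)$, the step that makes this passage, to be the only delicate one; the other two implications are short computations.

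For $(1)\Rightarrow(3)$ I would argue as follows. Since $f:A^J\to A$ is a $\Ja$-ary operation and, by \ref{para:set_ary_centralizer}, the $\Ja$-ary restriction of the $\fSet$-ary centralizer $G^{\perp(\fSet)}$ is the $\Ja$-ary centralizer $G^\perp$, the hypothesis $f\in G^\perp$ gives $f\in G^{\perp(\fSet)}$. By \ref{para:set_ary_centralizer} we also have $G^{\perp(\fSet)}=\langle G\rangle^{\perp(\fSet)}$, so $f$ commutes with every operation in $\langle G\rangle$ and in particular with every $h:A^{(A^J)}\to A$ of arity $A^J$ in $\langle G\rangle$, which is (3).

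For $(3)\Rightarrow(2)$, if $f$ commutes with such an $h$ then $f*h=f\stt h$ as operations $A^{J\times A^J}\to A$, so evaluating at the evaluation matrix $e\in A^{J\times A^J}$ and applying Proposition \ref{thm:ev_matr} gives $h(f)=(f*h)(e)=(f\stt h)(e)=f(h\circ u_J)$. For $(2)\Rightarrow(1)$, let $g\in G$, say $g:A^Y\to A$ with $Y\in\Ja$, and let $a\in A^{J\times Y}$ be arbitrary; form the pushforward $h_{ga}:=\grave{a}_*(g):A^{(A^J)}\to A$ of Proposition \ref{thm:pf_enc_first_kp} (taken with $X=J$). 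It has arity $A^J$ and, being a pushforward of $g\in G$, lies in $\langle G\rangle$ by \ref{para:cl_pf}. Applying hypothesis (2) to $h_{ga}$ and then Proposition \ref{thm:pf_enc_first_kp}, we get
\[(f*g)(a)=h_{ga}(f)=f(h_{ga}\circ u_J)=(f\stt g)(a)\,.\]
Since $a$ was arbitrary, $f*g=f\stt g$, i.e.\ $f\perp g$; and since $g\in G$ was arbitrary, $f\in G^\perp$. This closes the cycle and establishes all three equivalences.
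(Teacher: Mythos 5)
Your proposal is correct and follows essentially the same route as the paper's own proof: the cycle $(1)\Rightarrow(3)\Rightarrow(2)\Rightarrow(1)$, with $(1)\Rightarrow(3)$ via the identifications $G^\perp \leftrightarrow G^{\perp(\fSet)} = \langle G\rangle^{\perp(\fSet)}$ from \ref{para:set_ary_centralizer}, $(3)\Rightarrow(2)$ by evaluating at the evaluation matrix using Proposition \ref{thm:ev_matr}, and $(2)\Rightarrow(1)$ via the pushforwards $h_{ga}$ of Proposition \ref{thm:pf_enc_first_kp} together with \ref{para:cl_pf}. No discrepancies to report.
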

\begin{proof}
Clearly $f \in G^\perp\;\Leftrightarrow\; f \in G^{\perp(\fSet)}$, while $G^{\perp(\fSet)} = \langle G\rangle^{\perp(\fSet)}$ by \ref{para:set_ary_centralizer}.  Hence (1) implies (3).  If (3) holds, then for each $h \in \langle G\rangle(A^J)$ we know that $f*h = f \stt h$, so in particular $h(f) = (f*h)(e) = (f\stt h)(e) = f(h \circ u_J)$ by \ref{thm:ev_matr}, where $e$ is the evaluation matrix, showing that (2) holds.  Lastly, suppose that (2) holds.  Let $g:A^K \rightarrow A$ be an operation in $G$.  Then, for all $a \in A^{J\times K}$, the operation $h_{ga}:A^{(A^J)} \rightarrow A$ of \ref{thm:pf_enc_first_kp} is by definition a pushforward of an operation $g \in G$, so $h_{ga} \in \langle G\rangle$ by \ref{para:cl_pf}, and hence $(f*g)(a) = h_{ga}(f) = f(h_{ga} \circ u_J) = (f\stt g)(a)$ by \ref{thm:pf_enc_first_kp} and (2), showing that $f*g = f\stt g$, i.e. that $f$ commutes with $g$.  Hence (1) holds.
\end{proof}

\begin{cor}\label{thm:charn_cc_cor}
Let $G$ be a set of $\Ja$-ary operations on a small set $A$.  Then $G^\perp$ is the set of all $\Ja$-ary operations $f:A^J \rightarrow A$ such that $h(f) = f(h \circ u_J)$ for every operation $h:A^{(A^J)} \rightarrow A$ of arity $A^J$ in the $\fSet$-ary clone $\langle G\rangle$ generated by $G$.
\end{cor}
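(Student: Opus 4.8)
The plan is to deduce this directly from Theorem \ref{thm:charn_cc}, of which it is an elementwise reformulation. First I would observe that, by Definition \ref{defn:centralizer}, the centralizer $G^\perp$ is by construction a subset of $\cO_A^{\,\alpha}$, so every element of $G^\perp$ is a $\Ja$-ary operation $f:A^J \rightarrow A$ for some $J \in \Ja$; thus both the set $G^\perp$ and the set described in the statement consist of $\Ja$-ary operations on $A$, and it suffices to show they have the same elements.

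Next I would fix an arbitrary $\Ja$-ary operation $f:A^J \rightarrow A$ with $J \in \Ja$ and invoke the equivalence of conditions (1) and (2) in Theorem \ref{thm:charn_cc}: $f \in G^\perp$ if and only if $h(f) = f(h \circ u_J)$ for every operation $h:A^{(A^J)} \rightarrow A$ of arity $A^J$ lying in $\langle G\rangle$. Since this holds for every such $f$, the two sets coincide, which is exactly the assertion of the corollary.

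There is essentially no obstacle here, as the corollary is a direct restatement of part of an already-established theorem; the only point requiring a word of care is the implicit claim that membership in $G^\perp$ presupposes that $f$ is $\Ja$-ary, which is immediate from the definition of $\cO_A^{\,\alpha}$ and of $(-)^\perp$. I would therefore keep the proof to one or two sentences, citing Definition \ref{defn:centralizer} for the first point and the equivalence $(1)\Leftrightarrow(2)$ of Theorem \ref{thm:charn_cc} for the second.
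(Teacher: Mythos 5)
Your proposal is correct and matches the paper's intent exactly: the paper gives no separate proof of Corollary \ref{thm:charn_cc_cor} precisely because it is the elementwise restatement of the equivalence $(1)\Leftrightarrow(2)$ of Theorem \ref{thm:charn_cc}, which is all your argument uses. The brief remark about membership in $G^\perp$ presupposing that $f$ is $\Ja$-ary is a harmless extra precaution.
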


A special case of \ref{thm:charn_cc_cor} for $\fSet$-ary operations is obtained by replacing $\Ja$ by $\fSet$.  As a consequence, we also obtain the following result for $\fSet$-ary \textit{clones} $G$:

\begin{cor}\label{thm:charn_cc_cor_set}
Let $G$ be a $\fSet$-ary clone on a small set $A$.  Then $G^{\perp(\fSet)}$ is the set of all $\fSet$-ary operations $f:A^X \rightarrow A$ such that $h(f) = f(h \circ u_X)$ for every operation $h:A^{(A^X)} \rightarrow A$ of arity $A^X$ in $G$.
\end{cor}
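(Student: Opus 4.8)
The plan is to read this statement off from Corollary \ref{thm:charn_cc_cor} by specializing the family of arities. First I would take $\Ja = \fSet$ (equivalently, I would work with $\alpha = \kappa(\fSet)$), so that the $\Ja$-ary operations become exactly the $\fSet$-ary operations and $G^\perp$ becomes $G^{\perp(\fSet)}$ in the notation of \ref{para:set_ary_centralizer}. A $\fSet$-ary clone $G$ is in particular a set of $\fSet$-ary operations on the small set $A$, so Corollary \ref{thm:charn_cc_cor} applies verbatim and tells us that $G^{\perp(\fSet)}$ is the set of all $\fSet$-ary operations $f : A^X \to A$ such that $h(f) = f(h \circ u_X)$ for every operation $h : A^{(A^X)} \to A$ of arity $A^X$ in the $\fSet$-ary clone $\langle G\rangle$ generated by $G$.

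Second, I would invoke the clone hypothesis on $G$ to collapse $\langle G\rangle$ to $G$. By \ref{para:clone_gend_by}, $\langle G\rangle$ is the smallest $\fSet$-ary clone containing $G$, and $G$ is already such a clone, so $\langle G\rangle = G$. Substituting $\langle G\rangle = G$ into the description obtained in the previous paragraph yields precisely the asserted characterization of $G^{\perp(\fSet)}$, which completes the proof.

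I do not expect any genuine obstacle here: all the substantive content lies upstream in Theorem \ref{thm:charn_cc}, itself built on Propositions \ref{thm:ev_matr} and \ref{thm:pf_enc_first_kp}, and the present statement merely records the simplification that occurs once $G$ is assumed closed under the clone operations. The only point requiring a moment's care is the bookkeeping when passing from $\Ja$ to $\fSet$ in Corollary \ref{thm:charn_cc_cor}, namely that "$G^\perp$" there is the $\fSet$-ary centralizer clone $G^{\perp(\fSet)}$ here and that "$\Ja$-ary operation" there reads as "$\fSet$-ary operation" here; both identifications are immediate from \ref{defn:centralizer} and \ref{para:set_ary_centralizer}.
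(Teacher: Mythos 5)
Your proposal is correct and follows exactly the paper's (implicit) derivation: specialize Corollary \ref{thm:charn_cc_cor} to $\Ja = \fSet$ and then use that $\langle G\rangle = G$ since $G$ is already a $\fSet$-ary clone. No gaps.
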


\begin{para}\label{para:cc_slc_preamble}
We can generalize the preceding result for $\fSet$-ary clones as follows, using the fact that if $\alpha$ is a strong limit cardinal, $A$ is an $\alpha$-small set, and $J \in \Ja$, then the power $A^J$ is $\alpha$-small and so may be identified with a set in $\Ja$, for which we shall use the same notation $A^J$.  With this identification, we obtain the following:
\end{para}

\begin{thm}\label{thm:cc_thm_for_slc}
Suppose that the regular cardinal $\alpha$ is also a strong limit cardinal, and let $G$ be a $\Ja$-ary clone on an $\alpha$-small set $A$.  Then $G^\perp$ is the set of all $\Ja$-ary operations $f:A^J \rightarrow A$ such that $h(f) = f(h \circ u_J)$ for every operation $h:A^{(A^J)} \rightarrow A$ of arity $A^J$ in $G$.
\end{thm}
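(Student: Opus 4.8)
The plan is to deduce this from Corollary~\ref{thm:charn_cc_cor} (equivalently, from the equivalence of conditions (1) and (2) in Theorem~\ref{thm:charn_cc}) by showing that, under the strong-limit hypothesis, the $\fSet$-ary clone $\langle G\rangle$ contributes nothing new in arity $A^J$ beyond $G$ itself; that is, $\langle G\rangle(A^J) = G(A^J)$. Granting this, Corollary~\ref{thm:charn_cc_cor} says that $G^\perp$ consists of precisely those $\Ja$-ary $f:A^J \rightarrow A$ with $h(f) = f(h \circ u_J)$ for all $h:A^{(A^J)} \rightarrow A$ of arity $A^J$ in $\langle G\rangle$, and substituting $\langle G\rangle(A^J) = G(A^J)$ yields the claimed description.

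To set things up I would invoke the identification made in \ref{para:cc_slc_preamble}: since $\alpha$ is a strong limit cardinal, $A$ is $\alpha$-small, and $J \in \Ja \subseteq \fSet_\alpha$, the power $A^J$ is again $\alpha$-small, so we may treat $A^J$ as a set in $\Ja$. In particular the projections $\pi_a:A^{(A^J)} \rightarrow A$ $(a \in A^J)$ are $\Ja$-ary operations, and since $G$ is a $\Ja$-ary clone \pref{para:clone} they all lie in $G(A^J)$. Now by Corollary~\ref{thm:extn}, the $\fSet$-ary extension $\langle G\rangle$ of the $\Ja$-ary clone $G$ consists of the composites $g \circ A^\xi$ with $K \in \Ja$, $\xi \in X^K$, and $g \in G(K)$; taking $X = A^J$, every $h:A^{(A^J)} \rightarrow A$ of arity $A^J$ in $\langle G\rangle$ has the form $h = g \circ A^\xi$ with $K \in \Ja$, $\xi \in (A^J)^K$, and $g \in G(K)$. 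But $A^\xi:A^{(A^J)} \rightarrow A^K$ is by definition the map induced by the projections $\pi_{\xi(k)}:A^{(A^J)} \rightarrow A$ $(k \in K)$, so $h = g \circ (\pi_{\xi(k)})_{k \in K}$ is a composite, in the sense of \ref{para:clone}, of the operation $g \in G(K)$ with a family of operations in $G(A^J)$; since $G$ is a clone, $h \in G(A^J)$. The reverse inclusion $G(A^J) \subseteq \langle G\rangle(A^J)$ is immediate from $G \subseteq \langle G\rangle$, so $\langle G\rangle(A^J) = G(A^J)$ as desired.

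I do not expect any genuine obstacle here: the argument is essentially a one-line reduction of Corollary~\ref{thm:charn_cc_cor} once $A^J$ is known to be an admissible arity. The only points requiring care are the identification of $A^J$ with an element of $\Ja$ (for which the strong-limit hypothesis is exactly what is needed) and the observation that the induced map $A^\xi$ is assembled from projections already lying in $G$; both are routine, and no further appeal to the regularity or strong-limit properties of $\alpha$ is needed beyond those two uses.
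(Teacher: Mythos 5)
Your proposal is correct and follows essentially the same route as the paper: identify $A^J$ with an element of $\Ja$ using the strong-limit hypothesis, observe that $\langle G\rangle(A^J) = G(A^J)$, and conclude by Corollary~\ref{thm:charn_cc_cor}. The only difference is that the paper obtains $G(A^J) = \langle G\rangle(A^J)$ by directly citing the ``in particular'' clause of Corollary~\ref{thm:extn} (that $G$ is the $\Ja$-ary restriction of $\langle G\rangle$), whereas you re-derive that fact for the arity $A^J$ from the explicit pushforward description; both are valid.
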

\begin{proof}
Let $J \in \Ja$.  By \ref{thm:extn}, $G$ is the $\Ja$-ary restriction of the $\fSet$-ary clone $\langle G\rangle$, so that in the present case where $A^J \in \Ja$ we have that $G(A^J) = \langle G\rangle(A^J)$, and hence the result follows by \ref{thm:charn_cc_cor}.
\end{proof}

\begin{cor}\label{thm:charn_cc_cor_fin}
Let $G$ be a finitary clone on a finite set $A$ \pref{exa:fin_cl}.  Then the finitary centralizer $G^{\perp(\aleph_0)}$ is the set of all finitary operations $f:A^n \rightarrow A$ $(n \in \aleph_0)$ such that $h(f) = f\bigl(\bigl(h(\pi_i)\bigr)_{i \in n}\bigr)$ for every operation $h:A^{(A^n)} \rightarrow A$ of arity $A^n$ in $G$, where $\pi_i:A^n \rightarrow A$ denotes the projection $(i \in n)$.
\end{cor}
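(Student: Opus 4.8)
The plan is to deduce this directly from Theorem \ref{thm:cc_thm_for_slc} by specializing to the finitary setting. Recall from \ref{exa:fin_cl} that a finitary clone on $A$ is precisely a $\Ja$-ary clone on $A$ in the case $\alpha = \aleph_0$ with $\Ja = \mathfrak{Card}_{\aleph_0} = \aleph_0$, the set of finite cardinals, and that in this case $G^{\perp(\aleph_0)}$ is the $\Ja$-ary centralizer $G^\perp$ of Definition \ref{defn:centralizer}. As recalled in \ref{para:set_theory}, $\aleph_0$ is both regular and a strong limit cardinal, so the standing hypothesis of Theorem \ref{thm:cc_thm_for_slc} is satisfied. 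First I would note that since $A$ is finite we have $|A| < \aleph_0$, so $A$ is $\aleph_0$-small, and for each $n \in \aleph_0$ the power $A^n$ is again finite, hence — under the identification of \ref{para:cc_slc_preamble} — a finite cardinal lying in $\Ja$. Theorem \ref{thm:cc_thm_for_slc} then applies and tells us that $G^{\perp(\aleph_0)}$ consists of exactly those finitary operations $f : A^n \to A$ such that $h(f) = f(h \circ u_n)$ for every operation $h : A^{(A^n)} \to A$ of arity $A^n$ in $G$, where $u_n : n \to A^{(A^n)}$ is the map sending $i \in n$ to the projection $\pi_i$.

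It then only remains to rewrite $f(h \circ u_n)$ in the more concrete form appearing in the statement. By the definition of $u_n$ as in \ref{thm:ev_matr}, the composite $h \circ u_n : n \to A$ is the map $i \mapsto h(u_n(i)) = h(\pi_i)$, i.e. it is precisely the tuple $\bigl(h(\pi_i)\bigr)_{i \in n} \in A^n$. Substituting this into the equation from Theorem \ref{thm:cc_thm_for_slc} gives $h(f) = f\bigl(\bigl(h(\pi_i)\bigr)_{i \in n}\bigr)$, which is exactly the condition in the corollary, completing the argument.

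I do not expect any genuine obstacle here: the corollary is a pure specialization of Theorem \ref{thm:cc_thm_for_slc}, and the only points to verify — that $\aleph_0$ is regular and strong limit, that finite powers of a finite set remain finite (so the identification $A^n \in \Ja$ is legitimate), and that $h \circ u_n$ unwinds to the tuple $\bigl(h(\pi_i)\bigr)_{i \in n}$ — are all immediate from the definitions already established in the earlier sections.
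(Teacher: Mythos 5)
Your proposal is correct and matches the paper's (implicit) argument exactly: the corollary is obtained by specializing Theorem \ref{thm:cc_thm_for_slc} to the regular strong limit cardinal $\aleph_0$ with $\Ja$ the set of finite cardinals, noting that $A^n$ is again finite so the identification of \ref{para:cc_slc_preamble} applies, and unwinding $h \circ u_n$ as the tuple $\bigl(h(\pi_i)\bigr)_{i \in n}$. All the points you flag for verification are indeed immediate, and no further argument is needed.
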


\section{Theorems on double centralizer clones in general}

We impose the following assumption throughout this section; this assumption is satisfied by the cardinals $\kappa(\fSet)$ and $\aleph_0$, for example:

\begin{assn}
We assume that the regular cardinal $\alpha$ is also a strong limit cardinal.
\end{assn}

\begin{thm}\label{thm:dbl_cc}
Let $(\Sigma,A)$ be a $\Ja$-ary algebra whose underlying set $A$ is $\alpha$-small.  Then the $\Ja$-ary double centralizer $(\Sigma^A)^{\perp\perp}$ is the set of all $\Ja$-ary operations $f:A^J \rightarrow A$ such that $h(f) = f(h \circ u_J)$ for every $\Sigma$-homomorphism $h:A^{(A^J)} \rightarrow A$.
\end{thm}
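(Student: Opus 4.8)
The plan is to deduce Theorem \ref{thm:dbl_cc} from the general characterization in Theorem \ref{thm:charn_cc} (more precisely Corollary \ref{thm:charn_cc_cor}) applied to the set $G = (\Sigma^A)^\perp$, since by definition $(\Sigma^A)^{\perp\perp} = ((\Sigma^A)^\perp)^\perp$. By Corollary \ref{thm:charn_cc_cor}, an operation $f:A^J\to A$ lies in $(\Sigma^A)^{\perp\perp}$ if and only if $h(f) = f(h\circ u_J)$ for every operation $h:A^{(A^J)}\to A$ of arity $A^J$ in the $\fSet$-ary clone $\langle (\Sigma^A)^\perp\rangle$ generated by $(\Sigma^A)^\perp$. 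So the entire content of the theorem reduces to identifying which operations of arity $A^J$ lie in $\langle (\Sigma^A)^\perp\rangle$: I claim these are exactly the $\Sigma$-homomorphisms $A^{(A^J)}\to A$.

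For the proof I would proceed in two inclusions. First, since $\alpha$ is a strong limit cardinal and $A$ is $\alpha$-small, the set $A^J$ is $\alpha$-small and hence (after the identification of \ref{para:cc_slc_preamble}) lies in $\Ja$; therefore $A^{(A^J)} = \cO_A^{\,\alpha}(A^J)$ makes sense as a $\Ja$-ary hom-object, and $(\Sigma^A)^\perp$ is a $\Ja$-ary clone with $(\Sigma^A)^\perp(A^J) = \Sigma\textnormal{-ALG}(A^{(A^J)}, A)$ by \ref{para:centralizer_clone}–\ref{para:cc_of_alg}, i.e. its value at the arity $A^J$ is precisely the set of $\Sigma$-homomorphisms $A^{(A^J)}\to A$. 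Now by Corollary \ref{thm:extn}, the $\fSet$-ary extension $\langle (\Sigma^A)^\perp\rangle$ has $\Ja$-ary restriction equal to $(\Sigma^A)^\perp$ itself; in particular, since $A^J\in\Ja$, we get $\langle (\Sigma^A)^\perp\rangle(A^J) = (\Sigma^A)^\perp(A^J) = \Sigma\textnormal{-ALG}(A^{(A^J)},A)$. Feeding this identification back into Corollary \ref{thm:charn_cc_cor} gives exactly the claimed description of $(\Sigma^A)^{\perp\perp}$.

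The one point requiring a little care is the passage $(\Sigma^A)^{\perp\perp} = ((\Sigma^A)^\perp)^\perp$ together with the fact that $(\Sigma^A)^\perp$ is a genuine set of $\Ja$-ary operations on the small set $A$, so that Corollary \ref{thm:charn_cc_cor} is applicable with $G := (\Sigma^A)^\perp$: this is immediate from \ref{defn:centralizer} and \ref{para:centralizer_clone}, which tell us $(\Sigma^A)^\perp \subseteq \cO_A^{\,\alpha}$. One must also note that \ref{para:cc_of_alg} gives $(\Sigma^A)^\perp(A^J) = \Sigma\textnormal{-ALG}(A^J, A)$ for the arity $A^J$ — here $A^J$ plays the role of a generic $\Ja$-ary arity, which is legitimate precisely because $A^J\in\Ja$ under the strong-limit hypothesis. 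I do not anticipate any essential obstacle; the result is a clean corollary of the earlier machinery, and the only thing to get right is bookkeeping: that the evaluation-matrix characterization of Theorem \ref{thm:charn_cc}, specialized to $G=(\Sigma^A)^\perp$, only ever sees operations of arity $A^J$ in $\langle(\Sigma^A)^\perp\rangle$, and that these are pinned down by Corollary \ref{thm:extn} to be the $\Sigma$-homomorphisms out of $A^{(A^J)}$.
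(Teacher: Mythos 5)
Your proposal is correct and follows essentially the same route as the paper: the paper deduces the theorem from Theorem \ref{thm:cc_thm_for_slc} applied to the $\Ja$-ary clone $(\Sigma^A)^\perp$ together with the identification $(\Sigma^A)^\perp(A^J) = \Sigma\textnormal{-ALG}(A^{(A^J)},A)$ from \ref{para:cc_of_alg}, and your argument simply inlines the proof of Theorem \ref{thm:cc_thm_for_slc} (namely Corollary \ref{thm:charn_cc_cor} plus Corollary \ref{thm:extn} and the fact that $A^J \in \Ja$ under the strong-limit hypothesis). The only blemish is the notational slip near the end where you write $\Sigma\textnormal{-ALG}(A^J,A)$ instead of $\Sigma\textnormal{-ALG}(A^{(A^J)},A)$ for the value at arity $A^J$, but you state it correctly earlier, so this does not affect the argument.
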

\begin{proof}
$(\Sigma^A)^{\perp\perp}$ is the $\Ja$-ary centralizer of the $\Ja$-ary clone $(\Sigma^A)^\perp$, so this follows from Theorem \ref{thm:cc_thm_for_slc} since $(\Sigma^A)^\perp(A^J) = \Sigma\textnormal{-ALG}(A^{(A^J)},A)$ by \ref{para:cc_of_alg}, with the identification in \ref{para:cc_slc_preamble}.
\end{proof}

Applying the preceding theorem to the cardinality $\kappa(\fSet)$ of the universe $\fSet$, we obtain the following result on the $\fSet$-ary double centralizer of an algebra:

\begin{cor}\label{thm:fset_ary_dbl_centralizer}
Let $(\Sigma,A)$ be a $\fSet$-ary algebra whose underlying set $A$ is small.  Then the $\fSet$-ary double centralizer $(\Sigma^A)^{\perp\perp(\fSet)}$ is the set of all $\fSet$-ary operations $f:A^X \rightarrow A$ such that $h(f) = f(h \circ u_X)$ for every $\Sigma$-homomorphism $h:A^{(A^X)} \rightarrow A$.
\end{cor}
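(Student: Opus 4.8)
The plan is to obtain this corollary as the special case of Theorem~\ref{thm:dbl_cc} in which the regular cardinal $\alpha$ is specialized to $\kappa(\fSet)$ and the set of $\alpha$-small arities $\Ja$ is specialized to $\fSet$ itself. First I would check that this specialization is admissible: by \ref{para:grothu} the cardinal $\kappa(\fSet)$ is inaccessible, hence in particular a regular strong limit cardinal, so the standing assumption of the ambient section (that $\alpha$ is a regular strong limit cardinal) is satisfied; and by Example~\ref{exa:small_arities} the choice $\Ja = \fSet$ meets the constraint $\mathfrak{Card}_\alpha \subseteq \Ja \subseteq \fSet_\alpha$ imposed in \ref{para:ar}.

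Next I would record the translations of notation that occur under this specialization. Since $\kappa(\fSet)$ is inaccessible, every small set has cardinality $< \kappa(\fSet)$, so ``$\alpha$-small'' coincides with ``small''; thus the hypothesis that $A$ is small is exactly the hypothesis ``$A$ is $\alpha$-small'' of Theorem~\ref{thm:dbl_cc}. Likewise, a $\Ja$-ary operation on $A$ is now precisely a $\fSet$-ary operation $A^X \to A$ with $X$ small; the $\Ja$-ary double centralizer $(\Sigma^A)^{\perp\perp}$ is, by Definition~\ref{defn:centralizer} together with \ref{para:set_ary_centralizer}, the $\fSet$-ary double centralizer $(\Sigma^A)^{\perp\perp(\fSet)}$; and the arity $A^J$ appearing in Theorem~\ref{thm:dbl_cc} is here $A^X$, which is small (being a product of a small family of small sets) as needed for $A^{(A^X)}$ to be a legitimate small arity. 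Feeding these identifications into Theorem~\ref{thm:dbl_cc} yields exactly the asserted description of $(\Sigma^A)^{\perp\perp(\fSet)}$.

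Since the corollary is a pure instance of Theorem~\ref{thm:dbl_cc}, there is no genuine obstacle; the only point that deserves a moment's care is confirming that the decoration $(-)^{\perp\perp}$ in Theorem~\ref{thm:dbl_cc} does unwind to $(-)^{\perp\perp(\fSet)}$ under the present choices, which is immediate from Definition~\ref{defn:centralizer} and \ref{para:set_ary_centralizer}. Alternatively, should a self-contained argument be preferred, one could re-run the proof of Theorem~\ref{thm:dbl_cc} directly in the $\fSet$-ary setting: apply Corollary~\ref{thm:charn_cc_cor_set} to the $\fSet$-ary clone $G := (\Sigma^A)^{\perp(\fSet)}$ and use the identity $G(A^X) = \Sigma\textnormal{-ALG}(A^{(A^X)},A)$ from \ref{para:cc_of_alg} to rewrite ``operation $h$ of arity $A^X$ in $G$'' as ``$\Sigma$-homomorphism $h : A^{(A^X)} \to A$''.
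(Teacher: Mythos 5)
Your proposal is correct and matches the paper's route exactly: the paper derives this corollary by applying Theorem~\ref{thm:dbl_cc} to the cardinal $\kappa(\fSet)$ with $\Ja = \fSet$, which is precisely your specialization. Your verification that $\kappa(\fSet)$ is a regular strong limit cardinal and that the notations unwind as claimed is the (implicit) content of the paper's one-line justification.
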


If we instead apply Theorem \ref{thm:dbl_cc} to the regular strong limit cardinal $\aleph_0$, we obtain the following:

\begin{cor}\label{thm:finitary_dbl_centralizer}
Let $(\Sigma,A)$ be a finitary algebra whose underlying set $A$ is finite.  Then the finitary double centralizer $(\Sigma^A)^{\perp\perp(\aleph_0)}$ is the set of all finitary operations $f:A^n \rightarrow A$ such that $h(f) = f\bigl((h(\pi_i))_{i \in n}\bigr)$ for every $\Sigma$-homomorphism $h:A^{(A^n)} \rightarrow A$.
\end{cor}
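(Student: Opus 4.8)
The plan is to derive this corollary as the special case of Theorem~\ref{thm:dbl_cc} obtained by taking $\alpha = \aleph_0$ and $\Ja = \mathfrak{Card}_{\aleph_0} = \aleph_0$, the set of all finite cardinals, so that the $\Ja$-ary notions specialize to the finitary ones as in Example~\ref{exa:fin_cl}. First I would verify that this choice of $\alpha$ is legitimate for the present section: as recalled in~\ref{para:set_theory}, $\aleph_0$ is a regular cardinal and also a strong limit cardinal, so the standing Assumption is satisfied. Under this specialization, an $\alpha$-small set is precisely a finite set, a finitary algebra is a $\Ja$-ary algebra, and $(\Sigma^A)^{\perp\perp(\aleph_0)}$ is the $\Ja$-ary double centralizer $(\Sigma^A)^{\perp\perp}$; thus a finitary algebra $(\Sigma,A)$ with $A$ finite is exactly a $\Ja$-ary algebra with underlying set $A$ that is $\alpha$-small, and Theorem~\ref{thm:dbl_cc} applies to it directly.

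I would then make explicit the identification used in~\ref{para:cc_slc_preamble} for this case: since $\alpha = \aleph_0$ is a strong limit cardinal and $A$ and $n$ are finite, the power $A^n$ is again finite and so may be identified with a finite cardinal lying in $\Ja$; this is what allows us to speak of operations $h:A^{(A^n)} \to A$ of arity $A^n$ within the finitary setting, and in particular of $\Sigma$-homomorphisms $h:A^{(A^n)} \to A$. With this in hand, Theorem~\ref{thm:dbl_cc} says that $(\Sigma^A)^{\perp\perp}$ consists of exactly those finitary operations $f:A^n \to A$ satisfying $h(f) = f(h \circ u_n)$ for every such $\Sigma$-homomorphism $h$, where $u_n:n \to A^{(A^n)}$ is the map of~\ref{thm:ev_matr}.

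It remains only to unwind the abbreviation $u_n$. By the definition in~\ref{thm:ev_matr}, $u_n$ sends each $i \in n$ to the projection $\pi_i:A^n \to A$, so the composite $h \circ u_n:n \to A$ is the tuple $\bigl(h(\pi_i)\bigr)_{i \in n} \in A^n$, and hence the equation $h(f) = f(h \circ u_n)$ is precisely $h(f) = f\bigl(\bigl(h(\pi_i)\bigr)_{i \in n}\bigr)$, as in the statement. This completes the argument; there is no real obstacle here, as the proof is a straightforward specialization of Theorem~\ref{thm:dbl_cc} together with a routine translation of notation, mirroring the passage from Theorem~\ref{thm:cc_thm_for_slc} to Corollary~\ref{thm:charn_cc_cor_fin}.
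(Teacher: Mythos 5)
Your proposal is correct and matches the paper's own treatment exactly: the paper obtains this corollary by applying Theorem~\ref{thm:dbl_cc} with $\alpha = \aleph_0$ (which is regular and a strong limit cardinal) and unwinding $h \circ u_n$ as $\bigl(h(\pi_i)\bigr)_{i \in n}$. Your additional remarks about the identification of $A^n$ with a finite cardinal in $\Ja$ are a faithful elaboration of what the paper leaves implicit.
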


The following corollary to Theorem \ref{thm:dbl_cc} provides a powerful method for addressing the following type of problem: \textit{Given a specific $\Ja$-ary algebra $(\Sigma,A)$, determine whether the $\Ja$-ary double centralizer clone $(\Sigma^A)^{\perp\perp}$ is equal to the clone of $\Ja$-ary derived operations $\langle \Sigma^A\rangle_\alpha$}.  In view of \ref{para:perp_gc}, this is equivalent to the following problem: \textit{Determine whether the clone of $\Ja$-ary derived operations $\langle \Sigma^A\rangle_\alpha$ is the $\Ja$-ary centralizer clone $G^\perp$ of some set $G$ of $\Ja$-ary operations on $A$.}

\begin{cor}\label{thm:method_for_showing_alg_sat}
Let $(\Sigma,A)$ be a $\Ja$-ary algebra whose underlying set $A$ is $\alpha$-small.  Then the following are equivalent: (1) The $\Ja$-ary double centralizer clone $(\Sigma^A)^{\perp\perp}$ is equal to the clone of \mbox{$\Ja$-ary} derived operations $\langle \Sigma^A\rangle_\alpha$; (2) for every $\Ja$-ary operation \mbox{$f:A^J \rightarrow A$}, if $f \notin \langle \Sigma^A\rangle_\alpha$ then there exists some $\Sigma$-homomorphism $h:A^{(A^J)} \rightarrow A$ such that $h(f) \neq f(h \circ u_J)$.
\end{cor}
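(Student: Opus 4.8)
The plan is to deduce this immediately from Theorem~\ref{thm:dbl_cc} together with the general inclusion $\langle\Sigma^A\rangle_\alpha\subseteq(\Sigma^A)^{\perp\perp}$ recorded in \ref{para:cc_of_alg}. First I would note that, since that inclusion always holds, condition (1) is equivalent to the reverse inclusion $(\Sigma^A)^{\perp\perp}\subseteq\langle\Sigma^A\rangle_\alpha$. Next I would invoke Theorem~\ref{thm:dbl_cc}, which (using the hypothesis that $\alpha$ is a regular strong limit cardinal and that $A$ is $\alpha$-small) identifies $(\Sigma^A)^{\perp\perp}$ with the set of those $\Ja$-ary operations $f:A^J\rightarrow A$ for which $h(f)=f(h\circ u_J)$ for every $\Sigma$-homomorphism $h:A^{(A^J)}\rightarrow A$. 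Substituting this description, the inclusion $(\Sigma^A)^{\perp\perp}\subseteq\langle\Sigma^A\rangle_\alpha$ becomes precisely the assertion that every $\Ja$-ary operation $f:A^J\rightarrow A$ satisfying $h(f)=f(h\circ u_J)$ for all such $h$ must lie in $\langle\Sigma^A\rangle_\alpha$. Taking the contrapositive, quantified over all $\Ja$-ary operations $f$, yields condition (2) verbatim, which establishes the equivalence.

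There is no substantive obstacle here: the corollary is essentially a logical repackaging of Theorem~\ref{thm:dbl_cc}, the only point worth making explicit being the already-established fact that $\langle\Sigma^A\rangle_\alpha$ is always contained in $(\Sigma^A)^{\perp\perp}$, so that verifying the desired equality reduces to verifying the single inclusion $(\Sigma^A)^{\perp\perp}\subseteq\langle\Sigma^A\rangle_\alpha$. I would therefore present the argument in one short paragraph, being careful only to spell out the quantifier manipulation in the passage between ``every $f\in(\Sigma^A)^{\perp\perp}$ lies in $\langle\Sigma^A\rangle_\alpha$'' and ``no $f$ outside $\langle\Sigma^A\rangle_\alpha$ can satisfy the equations $h(f)=f(h\circ u_J)$ for all $\Sigma$-homomorphisms $h$''.
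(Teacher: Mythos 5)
Your proposal is correct and is exactly the paper's own argument: the paper proves this corollary by citing Theorem \ref{thm:dbl_cc} together with the standing inclusion $\langle \Sigma^A\rangle_\alpha \subseteq (\Sigma^A)^{\perp\perp}$ from \ref{para:cc_of_alg}, and your write-up merely makes the contrapositive/quantifier bookkeeping explicit.
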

\begin{proof}
This follows from \ref{thm:dbl_cc}, since we always have $\langle \Sigma^A\rangle_\alpha \subseteq (\Sigma^A)^{\perp\perp}$ by \ref{para:cc_of_alg}.
\end{proof}

Applying the preceding result to the cardinals $\kappa(\fSet)$ and $\aleph_0$, respectively, we obtain the following corollaries:

\begin{cor}\label{thm:dbl_cc_tech_for_setary}
Let $(\Sigma,A)$ be a $\fSet$-ary algebra whose underlying set $A$ is small.  Then the following are equivalent: (1) $(\Sigma^A)^{\perp\perp(\fSet)} = \langle \Sigma^A\rangle$; (2) for every $\fSet$-ary operation $f:A^X \rightarrow A$, if $f \notin \langle \Sigma^A\rangle$ then there exists some $\Sigma$-homomorphism $h:A^{(A^X)} \rightarrow A$ such that $h(f) \neq f(h \circ u_X)$.
\end{cor}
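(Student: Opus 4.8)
The plan is to obtain this as the special case of Corollary~\ref{thm:method_for_showing_alg_sat} in which the regular cardinal $\alpha$ is taken to be $\kappa(\fSet)$ and the set of arities $\Ja$ is taken to be $\fSet$, as in Example~\ref{exa:small_arities}. First I would verify that the standing assumption of this section is met for this choice: since $\fSet = \mathfrak{V}_\kappa$ for the inaccessible cardinal $\kappa = \kappa(\fSet)$, the cardinal $\kappa(\fSet)$ is by definition inaccessible, hence in particular a regular strong limit cardinal, so Corollary~\ref{thm:method_for_showing_alg_sat} applies verbatim with this value of $\alpha$, and the hypothesis that $\mathfrak{Card}_\alpha \subseteq \Ja \subseteq \fSet_\alpha$ holds with $\Ja = \fSet$ because $\fSet = \fSet_{\kappa(\fSet)}$.

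It then remains only to translate notation. With $\alpha = \kappa(\fSet)$ and $\Ja = \fSet$, the $\Ja$-ary double centralizer $(\Sigma^A)^{\perp\perp}$ is by definition the $\fSet$-ary double centralizer $(\Sigma^A)^{\perp\perp(\fSet)}$; the clone of $\Ja$-ary derived operations $\langle \Sigma^A\rangle_\alpha$ is the $\fSet$-ary clone of derived operations $\langle \Sigma^A\rangle$; a $\Ja$-ary operation $f:A^J \rightarrow A$ is precisely an arbitrary $\fSet$-ary operation $f:A^X \rightarrow A$ with $X$ a small set; and the map $u_J:J \rightarrow A^{(A^J)}$ of \ref{thm:ev_matr} becomes $u_X:X \rightarrow A^{(A^X)}$. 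Finally, $A$ is small if and only if it is $\alpha$-small for $\alpha = \kappa(\fSet)$, so the hypothesis on $(\Sigma,A)$ matches the one in the statement. Substituting these translations into the two conditions of Corollary~\ref{thm:method_for_showing_alg_sat} yields exactly conditions (1) and (2) of the present statement.

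I do not expect any real obstacle: the content is entirely contained in Corollary~\ref{thm:method_for_showing_alg_sat}, and the only points deserving a word of care are confirming that $\kappa(\fSet)$ falls under the section's standing assumption (immediate from inaccessibility) and that the instantiation $\Ja = \fSet$ is legitimate and produces the stated notational forms.
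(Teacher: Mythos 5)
Your proposal is correct and matches the paper exactly: the paper obtains this corollary by applying Corollary~\ref{thm:method_for_showing_alg_sat} to the cardinal $\kappa(\fSet)$ with $\Ja = \fSet$, which is a regular strong limit cardinal by inaccessibility. The notational translation you carry out is precisely what the paper leaves implicit.
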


\begin{cor}\label{thm:dbl_cc_tech_for_finitary}
Let $(\Sigma,A)$ be a finitary algebra whose underlying set $A$ is finite.  Then the following are equivalent: (1) $(\Sigma^A)^{\perp\perp(\aleph_0)} = \langle \Sigma^A\rangle_{\aleph_0}$; (2) for every finitary operation $f:A^n \rightarrow A$, if $f \notin \langle \Sigma^A\rangle_{\aleph_0}$ then there exists some $\Sigma$-homomorphism $h:A^{(A^n)} \rightarrow A$ such that $h(f) \neq f\bigl((h(\pi_i))_{i \in n}\bigr)$.
\end{cor}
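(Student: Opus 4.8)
The plan is to obtain this corollary as the special case of Corollary~\ref{thm:method_for_showing_alg_sat} in which $\alpha = \aleph_0$ and $\Ja = \mathfrak{Card}_{\aleph_0} = \aleph_0$ is the set of all finite cardinals, as in Example~\ref{exa:fin_cl}. The first step is to verify that the hypotheses of that corollary are met in this instance: by \ref{para:set_theory}, $\aleph_0$ is a regular strong limit cardinal, so the standing assumption of this section holds with $\alpha = \aleph_0$; and since $A$ is finite we have $|A| < \aleph_0$, so $A$ is $\alpha$-small and $(\Sigma,A)$ is a $\Ja$-ary algebra of the kind to which Corollary~\ref{thm:method_for_showing_alg_sat} applies.

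The remaining step is to unwind the notation so that the two conditions of Corollary~\ref{thm:method_for_showing_alg_sat} become conditions (1) and (2) of the present statement. For $\Ja = \aleph_0$, the $\Ja$-ary operations on $A$ are exactly the finitary operations $f : A^n \to A$ with $n$ a finite cardinal; the $\Ja$-ary clone of derived operations $\langle \Sigma^A\rangle_\alpha$ is the finitary clone $\langle \Sigma^A\rangle_{\aleph_0}$; and the $\Ja$-ary double centralizer $(\Sigma^A)^{\perp\perp}$ is the finitary double centralizer $(\Sigma^A)^{\perp\perp(\aleph_0)}$. Finally, for $J = n$ the map $u_J : J \to A^{(A^J)}$ of \ref{thm:ev_matr} sends each $i \in n$ to the projection $\pi_i : A^n \to A$, so $h \circ u_n \in A^n$ is the tuple $\bigl(h(\pi_i)\bigr)_{i \in n}$ and hence $f(h \circ u_J) = f\bigl(\bigl(h(\pi_i)\bigr)_{i \in n}\bigr)$. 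Substituting these identifications into Corollary~\ref{thm:method_for_showing_alg_sat} gives precisely the asserted equivalence.

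I do not expect any genuine obstacle here: all of the mathematical content has already been established in Corollary~\ref{thm:method_for_showing_alg_sat} and, behind it, in Theorems~\ref{thm:dbl_cc}, \ref{thm:cc_thm_for_slc}, and \ref{thm:charn_cc}. The only point that merits a word of care is the purely notational translation of $h \circ u_n$ into coordinates, which is immediate from the definition of $u_n$ recalled in \ref{thm:ev_matr}; once that identification is recorded, the proof is a one-line specialization of Corollary~\ref{thm:method_for_showing_alg_sat}.
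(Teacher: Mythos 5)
Your proposal is correct and matches the paper's own proof exactly: the paper obtains this corollary by applying Corollary~\ref{thm:method_for_showing_alg_sat} with $\alpha = \aleph_0$ and $\Ja = \mathfrak{Card}_{\aleph_0}$, just as you do. Your explicit verification that $\aleph_0$ is a regular strong limit cardinal, that a finite $A$ is $\alpha$-small, and that $h \circ u_n = \bigl(h(\pi_i)\bigr)_{i \in n}$ is exactly the (routine) content the paper leaves implicit.
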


\begin{para}\label{para:abstract_perspective}
We shall see in Sections \ref{sec:vec}, \ref{sec:gset}, and \ref{sec:free_mon_actions} that the preceding results can be successfully applied to prove that $(\Sigma^A)^{\perp\perp} = \langle \Sigma^A\rangle_\alpha$ for several specific examples of $\Ja$-ary algebras with $\alpha = \kappa(\fSet)$ or $\alpha = \aleph_0$.  One important fact that helps make this possible is that for each arity $J \in \Ja$, the subset $\langle \Sigma^A\rangle_\alpha(J) \subseteq A^{(A^J)}$ can often be concretely characterized, as it is the $\Sigma$-subalgebra of $A^{(A^J)}$ generated by the projections $\pi_j:A^J \rightarrow A$ $(j \in J)$ (by \ref{thm:charn_clone_gend_by}).

Some insight into the preceding results can be obtained by applying the following abstract perspective; readers may skip this discussion and the resulting Corollary \ref{thm:pairwise_equalizer}, as these items are not used in the remainder of the paper.  Letting $J \in \Ja$, the map $u_J:J \rightarrow A^{(A^J)}$ $(j \in J)$ factors as a composite $u_J = (J \xrightarrow{\eta_J} \langle \Sigma^A\rangle_\alpha(J) \overset{\iota_J}{\hookrightarrow} A^{(A^J)})$ where $\iota_J$ is the inclusion and $\eta_J$ is given by $j \mapsto \pi_j$.  Since $\langle \Sigma^A\rangle_\alpha(J)$ is the $\Sigma$-subalgebra of $A^{(A^J)}$ generated by the projections $\pi_j$ $(j \in J)$, by \ref{thm:charn_clone_gend_by}, it follows that for every map $a:J \rightarrow A$, there is a unique $\Sigma$-homomorphism $a^\sharp:\langle \Sigma^A\rangle(J) \rightarrow A$ such $a^\sharp \circ \eta_J = a$, i.e. such that $a^\sharp(\pi_j) = a(j)$ for all $j \in J$; explicitly, $a^\sharp(f) = f(a)$ for every $f:A^J \rightarrow A$ in $\langle \Sigma^A\rangle_\alpha(J)$.  Hence, $a^\sharp$ is a restriction of the projection (or evaluation) map $\pi_a:A^{(A^J)} \rightarrow A$, which is also a $\Sigma$-homomorphism.  Thus, every $a \in A^J$ not only extends to a $\Sigma$-homomorphism $a^\sharp$ defined on $\langle \Sigma^A\rangle_\alpha(J)$ but also extends further to a $\Sigma$-homomorphism $\pi_a$ defined on $A^{(A^J)}$, so that we have a commutative diagram
$$
\xymatrix{
J \ar@/^3ex/[rr]^{u_J} \ar[r]_(.4){\eta_J} \ar@/_3ex/[drr]_a & \langle \Sigma^A\rangle_\alpha(J) \ar@/_2ex/[dr]^{a^\sharp} \ar[r]_{\iota_J} & A^{(A^J)} \ar[d]^{\pi_a},\\
& & A.
}
$$
But while the extension $a^\sharp$ is unique, the further extension $\pi_a$ need not be unique.  Thus we have a bijection $A^J \cong \Sigma\textnormal{-ALG}(\langle \Sigma^A\rangle_\alpha(J),A)$ given by $a \mapsto a^\sharp$, while on the other hand we have an injective map $s:\Sigma\textnormal{-ALG}(\langle \Sigma^A\rangle_\alpha(J),A) \rightarrow \Sigma\textnormal{-ALG}(A^{(A^J)},A)$ that is given by $s(a^\sharp) = \pi_a$ $(a \in A^J)$ and has a retraction $r:\Sigma\textnormal{-ALG}(A^{(A^J)},A) \rightarrow \Sigma\textnormal{-ALG}(\langle \Sigma^A\rangle_\alpha(J),A)$ given by restriction along $\iota_J$.  Hence the composite $s \circ r$ is an idempotent map from the set $\Sigma\textnormal{-ALG}(A^{(A^J)},A)$ to itself, and $s \circ r$ sends each $\Sigma$-homomorphism $h:A^{(A^J)} \rightarrow A$ to the $\Sigma$-homomorphism $(s \circ r)(h) = \pi_{h \circ u_J}:A^{(A^J)} \rightarrow A$ given by $f \mapsto f(h \circ u_J)$, which we call the \textit{regularization} of $h$, calling $h$ \textit{regular} if $h = \pi_{h \circ u_J}$.

With these notations we obtain the following equivalent formulation of Theorem \ref{thm:dbl_cc} and Corollary \ref{thm:method_for_showing_alg_sat}, which provides a categorical way of expressing that the $\Ja$-ary double centralizer $(\Sigma^A)^{\perp\perp}$ consists of all $\Ja$-ary operations $f:A^J \rightarrow A$ that `cannot tell the difference between a $\Sigma$-homomorphism $h:A^{(A^J)} \rightarrow A$ and its regularization'.  Here we employ the notion of the \textit{pairwise equalizer} \cite{Lu:Cmt} of a family of parallel pairs of morphisms $\phi_i,\psi_i:B \rightrightarrows C_i$ $(i \in I)$ in a category $\C$, which is by definition a limit (which may or may not exist) of the diagram in $\C$ consisting of these parallel pairs.  If $\C$ is the category of sets, $\SET$, or the category of $\Sigma$-algebras, $\Sigma\textnormal{-ALG}$, then the pairwise equalizer of such a family is the subset (or $\Sigma$-subalgebra) $E = \{b \in B \mid \forall i \in I\;:\;\phi_i(b) = \psi_i(b)\} \subseteq B$.
\end{para}

\begin{cor}\label{thm:pairwise_equalizer}
Let $(\Sigma,A)$ be a $\Ja$-ary algebra whose underlying set $A$ is $\alpha$-small.  Then for each $J \in \Ja$, the $\Sigma$-subalgebra $(\Sigma^A)^{\perp\perp}(J) \subseteq A^{(A^J)}$ is the pairwise equalizer
$$(\Sigma^A)^{\perp\perp}(J) = \bigl\{f \in A^{(A^J)} \mid \forall h \in \Sigma\textnormal{-ALG}(A^{(A^J)},A)\;:\;h(f) = \pi_{h \circ u_J}(f)\bigr\}$$
of the family consisting of the following parallel pairs of $\Sigma$-homomorphisms
\begin{equation}\label{eq:parallel_pairs}h,\;\pi_{h \circ u_J}\;:\;A^{(A^J)} \rightrightarrows A\;\;\;\;\;\;\bigl(h \in \Sigma\textnormal{-ALG}(A^{(A^J)},A)\bigr)\end{equation}
where $\pi_{h \circ u_J}$ is the regularization of $h$ \pref{para:abstract_perspective}.  Consequently, $\langle \Sigma^A \rangle_\alpha(J) = (\Sigma^A)^{\perp\perp}(J)$ iff the inclusion $\iota_J:\langle \Sigma^A \rangle_\alpha(J) \hookrightarrow A^{(A^J)}$ exhibits the $\Sigma$-algebra $\langle \Sigma^A \rangle_\alpha(J)$ as a pairwise equalizer of the family \eqref{eq:parallel_pairs}.
\end{cor}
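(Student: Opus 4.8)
The plan is to read off this corollary from Theorem \ref{thm:dbl_cc} after translating its conclusion into the vocabulary of \ref{para:abstract_perspective}. The first step is to match the term $f(h\circ u_J)$ appearing in Theorem \ref{thm:dbl_cc} with the value $\pi_{h\circ u_J}(f)$ of the regularization of $h$. This is immediate from the definitions in \ref{para:abstract_perspective}: for any $a\in A^J$ the evaluation map $\pi_a:A^{(A^J)}\to A$ satisfies $\pi_a(f)=f(a)$ for all $f\in A^{(A^J)}$, so taking $a=h\circ u_J$ yields $\pi_{h\circ u_J}(f)=f(h\circ u_J)$. Hence, restricting the description of $(\Sigma^A)^{\perp\perp}$ furnished by Theorem \ref{thm:dbl_cc} to a fixed arity $J\in\Ja$ (recalling that $A^{(A^J)}\in\Ja$ by the running strong-limit assumption, as in \ref{para:cc_slc_preamble}), I obtain that $(\Sigma^A)^{\perp\perp}(J)$ is exactly the set $\{f\in A^{(A^J)}\mid \forall h\in\Sigma\textnormal{-ALG}(A^{(A^J)},A):h(f)=\pi_{h\circ u_J}(f)\}$.

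The second step is to recognize that set as a pairwise equalizer in $\Sigma\textnormal{-ALG}$. Each pair in \eqref{eq:parallel_pairs} is a parallel pair of $\Sigma$-homomorphisms $A^{(A^J)}\rightrightarrows A$, since $h$ is one by hypothesis and $\pi_{h\circ u_J}$ is one as well, the latter because every evaluation map $\pi_a$ was already observed to be a $\Sigma$-homomorphism in \ref{para:abstract_perspective}. By the explicit computation of pairwise equalizers in $\Sigma\textnormal{-ALG}$ recalled there, the pairwise equalizer of the family \eqref{eq:parallel_pairs} is the $\Sigma$-subalgebra $\{f\in A^{(A^J)}\mid\forall h:h(f)=\pi_{h\circ u_J}(f)\}$ of $A^{(A^J)}$, which by the first step coincides with $(\Sigma^A)^{\perp\perp}(J)$. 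This establishes the displayed identity.

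For the final equivalence I would work inside the poset of $\Sigma$-subalgebras of $A^{(A^J)}$, recalling from \ref{para:cc_of_alg} that $\langle\Sigma^A\rangle_\alpha(J)\subseteq(\Sigma^A)^{\perp\perp}(J)$ always. If these two $\Sigma$-subalgebras coincide, then $\iota_J$ is literally the inclusion of the pairwise equalizer produced above, so it exhibits $\langle\Sigma^A\rangle_\alpha(J)$ as a pairwise equalizer of \eqref{eq:parallel_pairs}. Conversely, if $\iota_J$ enjoys the universal property of the pairwise equalizer, then the inclusion $(\Sigma^A)^{\perp\perp}(J)\hookrightarrow A^{(A^J)}$ --- which equalizes every pair in \eqref{eq:parallel_pairs}, being itself such an equalizer --- factors through $\iota_J$; since both maps are $\Sigma$-subalgebra inclusions into $A^{(A^J)}$, this forces $(\Sigma^A)^{\perp\perp}(J)\subseteq\langle\Sigma^A\rangle_\alpha(J)$, hence equality.

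Given Theorem \ref{thm:dbl_cc} and the preparatory discussion in \ref{para:abstract_perspective}, the argument is essentially bookkeeping and I expect no serious obstacle; the only place meriting care is the last paragraph, where one must be precise about what it means for $\iota_J$ to exhibit $\langle\Sigma^A\rangle_\alpha(J)$ as a pairwise equalizer, given that limits are determined only up to isomorphism, and use that the comparison may be carried out within the poset of subobjects of $A^{(A^J)}$. A minor but essential point throughout is that the regularization $\pi_{h\circ u_J}$ is a $\Sigma$-homomorphism --- already established in \ref{para:abstract_perspective} --- without which \eqref{eq:parallel_pairs} would not be a diagram in $\Sigma\textnormal{-ALG}$.
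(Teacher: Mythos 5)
Your proposal is correct and follows exactly the route the paper intends: the corollary is presented there as an immediate reformulation of Theorem \ref{thm:dbl_cc} via the identification $f(h\circ u_J)=\pi_{h\circ u_J}(f)$ and the explicit description of pairwise equalizers in $\Sigma\textnormal{-ALG}$ from \ref{para:abstract_perspective}, with no separate proof given. Your third paragraph simply spells out the subobject-poset bookkeeping for the final equivalence, which the paper leaves implicit.
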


\section{Double centralizer clones of $K$-vector spaces}\label{sec:vec}

We now begin our consideration of specific examples of applications of Corollary \ref{thm:method_for_showing_alg_sat} and its corollaries (\ref{thm:dbl_cc_tech_for_setary}, \ref{thm:dbl_cc_tech_for_finitary}), starting with the following basic example, in which we let $K$ be a field and let $V$ be a nonzero $K$-vector space.  We also assume that the sets $K$ and $V$ are small.  In this section, we take $\Sigma$ to be the usual signature for $K$-vector spaces, so that $\Sigma$ consists of a binary operation symbol $+$, a constant symbol $0$, a unary operation symbol $-$, and a unary operation symbol $c$ for each $c \in K$.  Since $V$ is a $K$-vector space, $V$ carries the structure of a $\Sigma$-algebra.  Hence $(\Sigma,V)$ is a finitary algebra and so may be regarded also as a $\fSet$-ary algebra.

Given a small set $X$, the $\Sigma$-algebra $\langle \Sigma^V\rangle(X)$ is the $K$-subspace of $V^{(V^X)}$ spanned by the projections $\pi_x:V^X \rightarrow V$ $(x \in X)$, which are linearly independent in $V^{(V^X)}$ since $V$ is nonzero.  Hence, the projections $\pi_x$ $(x \in X)$ constitute a basis for the $K$-vector space $\langle \Sigma^V\rangle(X)$.

\begin{thm}\label{thm:vec_sp_thm}
Given a small field $K$ and a small, nonzero $K$-vector space $V$, the \mbox{$\fSet$-ary} double centralizer clone $(\Sigma^V)^{\perp\perp(\fSet)}$ of $V$ is equal to the clone of $\fSet$-ary derived operations $\langle \Sigma^V\rangle$ of $V$.
\end{thm}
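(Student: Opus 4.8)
The plan is to invoke Corollary \ref{thm:dbl_cc_tech_for_setary}, which reduces the assertion to the following: for every small set $X$ and every $\fSet$-ary operation $f : V^X \rightarrow V$ with $f \notin \langle \Sigma^V \rangle(X)$, there exists a $\Sigma$-homomorphism $h : V^{(V^X)} \rightarrow V$ such that $h(f) \neq f(h \circ u_X)$. The relevant structure is transparent here: the $\Sigma$-algebra $V^{(V^X)}$ is a power of $V$ in $\Sigma\textnormal{-ALG}$, hence the product $K$-vector space, so a $\Sigma$-homomorphism $V^{(V^X)} \rightarrow V$ is exactly a $K$-linear map; the map $u_X : X \rightarrow V^{(V^X)}$ sends each $x$ to the projection $\pi_x : V^X \rightarrow V$; and, as noted in the paragraph preceding the theorem, $\langle \Sigma^V \rangle(X)$ is precisely the $K$-subspace of $V^{(V^X)}$ spanned by the projections $\pi_x$ $(x \in X)$, which are linearly independent because $V \neq 0$.

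The key observation driving the argument is that the quantity $f(h \circ u_X)$ depends on a $K$-linear map $h$ only through the tuple $(h(\pi_x))_{x \in X} \in V^X$, whereas $h(f)$ additionally sees the values of $h$ on basis vectors of $V^{(V^X)}$ lying outside the span of the projections — and it does so nontrivially exactly when $f$ has a nonzero component there. Concretely, I would extend $\{\pi_x : x \in X\}$ to a $K$-basis $\{\pi_x : x \in X\} \cup \{b_j : j \in J\}$ of $V^{(V^X)}$ and write $f = \sum_{x \in X} c_x \pi_x + \sum_{j \in J} d_j b_j$ with finite support (possible since $f$ is an element of this vector space). The hypothesis $f \notin \langle \Sigma^V \rangle(X)$ says precisely that $d_{j_0} \neq 0$ for some $j_0 \in J$.

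Now fix a nonzero vector $v \in V$ and define two $K$-linear maps $h, h' : V^{(V^X)} \rightarrow V$ by $h(\pi_x) = h'(\pi_x) = 0$ for all $x \in X$, $h(b_j) = 0$ for all $j \in J$, and $h'(b_j) = v$ if $j = j_0$ and $h'(b_j) = 0$ otherwise. Then $h \circ u_X = h' \circ u_X = \mathbf{0} \in V^X$, so $f(h \circ u_X) = f(h' \circ u_X) = f(\mathbf{0})$ is a single element $w \in V$, while $h(f) = 0$ and $h'(f) = d_{j_0} v \neq 0$ (here we use that $K$ is a field, so $d_{j_0}$ is invertible and $v \neq 0$ gives $d_{j_0} v \neq 0$). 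Since $h(f) \neq h'(f)$, the element $w$ cannot equal both, so whichever of $h, h'$ disagrees with $w$ furnishes the required $\Sigma$-homomorphism. Together with the always-valid inclusion $\langle \Sigma^V \rangle \subseteq (\Sigma^V)^{\perp\perp(\fSet)}$ from \ref{para:cc_of_alg}, Corollary \ref{thm:dbl_cc_tech_for_setary} then yields the claimed equality. I do not anticipate a genuine obstacle: the essential insight is the use of a pair of test maps differing only off the projections, and the only point needing mild care is the bookkeeping with a possibly infinite basis of $V^{(V^X)}$ together with the fact that $f$, viewed as a vector there, has finite support, so that only finitely many non-projection basis vectors are relevant.
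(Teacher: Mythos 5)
Your proof is correct and follows essentially the same route as the paper's: both reduce the claim via Corollary \ref{thm:dbl_cc_tech_for_setary}, identify $\Sigma$-homomorphisms $V^{(V^X)}\rightarrow V$ with $K$-linear maps, and produce a linear map vanishing on the projections (so that $h\circ u_X=0$) whose value on $f$ differs from $f(0)$. The only difference is cosmetic: the paper extends $\{\pi_x \mid x\in X\}\cup\{f\}$ to a basis and sends $f$ directly to some $v_1\neq f(0)$, yielding a single explicit witness, whereas you use a generic basis extension together with a two-test-map argument; both are valid.
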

\begin{proof}
Let $f:V^X \rightarrow V$ be a (not necessarily linear) map, where $X$ is a small set, and suppose that $f \notin \langle \Sigma^V\rangle$.  The projections $\pi_x \in V^{(V^X)}$ $(x \in X)$ are linearly independent, and by assumption $f \in V^{(V^X)}\backslash \langle \Sigma^V\rangle$ lies outside the span of the $\pi_x$, so we may choose a basis $B \subseteq V^{(V^X)}$ for $V^{(V^X)}$ that is expressible as a disjoint union $B = \{\pi_x \mid x \in X\} \cup \{f\} \cup B_1$ for some set $B_1 \subseteq V^{(V^X)}$.  Also, note that upon applying $f$ to the constant zero map $0 \in V^X$ we obtain an element $f(0)$ of $V$, so since $V$ has at least two elements we may fix some element $v_1 \in V$ with $v_1 \neq f(0)$.  We may define a $K$-linear map $h:V^{(V^X)} \rightarrow V$ simply by defining $h$ on the basis $B$ as follows: $h(\pi_x) = 0$ for all $x \in X$, $h(f) = v_1$, and $h(b) = 0$ for every $b \in B_1$.   Recalling that $u_X:X \rightarrow V^{(V^X)}$ is given by $u_X(x) = \pi_x$, the composite $h \circ u_X:X \rightarrow V$ is therefore the zero element $h \circ u_X = 0 \in V^X$, so $h(f) = v_1 \neq f(0) = f(h \circ u_X)$.  The result now follows by Corollary \ref{thm:dbl_cc_tech_for_setary}.
\end{proof}

We can also prove the following finitary analogue when $K$ is a finite field:

\begin{thm}\label{thm:finite_field_vec}
Let $K$ be a finite field, and let $V$ be a nonzero, finite-dimensional $K$-vector space.  Then the finitary double centralizer clone $(\Sigma^V)^{\perp\perp(\aleph_0)}$ of $V$ is equal to the clone of finitary derived operations $\langle \Sigma^V \rangle_{\aleph_0}$ of $V$.
\end{thm}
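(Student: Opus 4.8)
The plan is to mimic the proof of Theorem \ref{thm:vec_sp_thm}, replacing the arbitrary small arity $X$ by a finite cardinal $n$ and invoking Corollary \ref{thm:dbl_cc_tech_for_finitary} in place of Corollary \ref{thm:dbl_cc_tech_for_setary}. The decisive new ingredient is that, since $K$ is finite and $V$ is finite-dimensional, $V$ is a \emph{finite} set; hence $V^n$ is finite and $V^{(V^n)} \cong V^{|V^n|}$ is a finite-dimensional $K$-vector space for every $n \in \aleph_0$. This means the linear-algebra construction used in Theorem \ref{thm:vec_sp_thm} --- extending a linearly independent set to a basis and prescribing a linear functional on it --- goes through at each finite arity, and no infinitary arities ever enter.

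In detail, I would argue as follows. By \ref{para:cc_of_alg} we always have $\langle \Sigma^V \rangle_{\aleph_0} \subseteq (\Sigma^V)^{\perp\perp(\aleph_0)}$, so by Corollary \ref{thm:dbl_cc_tech_for_finitary} it suffices to show that for every finitary operation $f : V^n \to V$ with $f \notin \langle \Sigma^V \rangle_{\aleph_0}$ there is a $K$-linear map $h : V^{(V^n)} \to V$ (equivalently, a $\Sigma$-homomorphism, since $\Sigma$ is the vector-space signature) with $h(f) \neq f\bigl((h(\pi_i))_{i \in n}\bigr)$. First recall, by \ref{thm:charn_clone_gend_by}, that $\langle \Sigma^V \rangle_{\aleph_0}(n)$ is the $K$-subspace of $V^{(V^n)}$ spanned by the projections $\pi_i : V^n \to V$ $(i \in n)$, which are linearly independent because $V \neq 0$. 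Since $f$ lies outside this span, the set $\{\pi_i \mid i \in n\} \cup \{f\}$ is linearly independent, so --- $V^{(V^n)}$ being finite-dimensional --- it extends to a basis $B = \{\pi_i \mid i \in n\} \cup \{f\} \cup B_1$ of $V^{(V^n)}$ for some $B_1 \subseteq V^{(V^n)}$. Since $V$ is nonzero it has at least two elements, so we may fix $v_1 \in V$ with $v_1 \neq f(0)$, where $0 \in V^n$ is the zero tuple. Define the $K$-linear map $h : V^{(V^n)} \to V$ by $h(\pi_i) = 0$ $(i \in n)$, $h(f) = v_1$, and $h(b) = 0$ $(b \in B_1)$. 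Then $(h(\pi_i))_{i \in n} = 0 \in V^n$, so $f\bigl((h(\pi_i))_{i \in n}\bigr) = f(0) \neq v_1 = h(f)$, and Corollary \ref{thm:dbl_cc_tech_for_finitary} yields the claim.

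The only point requiring care --- and the closest thing to an obstacle --- is ensuring that the finitary restriction does not cost us the linear functionals we need: in the finitary setting Corollary \ref{thm:dbl_cc_tech_for_finitary} only permits testing membership against $\Sigma$-homomorphisms $h : V^{(V^n)} \to V$ with $n$ finite, so it is essential that finiteness of $V$ makes each $V^{(V^n)}$ finite-dimensional, which is exactly where the hypotheses ``$K$ finite'' and ``$V$ finite-dimensional'' are used. Beyond that, the argument is a routine transcription of the proof of Theorem \ref{thm:vec_sp_thm}, with the edge case $n = 0$ (where there are no projections and $f \in V^{(V^0)} = V$ is a constant) handled automatically by the same construction.
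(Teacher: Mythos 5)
Your proof is correct and takes essentially the same approach as the paper, whose proof of this theorem consists precisely of the instruction to rerun the argument of Theorem \ref{thm:vec_sp_thm} with finite cardinals $n$ in place of arbitrary small sets $X$ and to invoke Corollary \ref{thm:dbl_cc_tech_for_finitary}. One small point of emphasis: the finiteness hypotheses are needed chiefly so that the underlying set $V$ is finite and Corollary \ref{thm:dbl_cc_tech_for_finitary} applies at all, not to make $V^{(V^n)}$ finite-dimensional --- the basis-extension step works in any vector space, as the proof of Theorem \ref{thm:vec_sp_thm} already shows.
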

\begin{proof}
We simply modify the proof of \ref{thm:vec_sp_thm} to consider finite cardinals $n$ rather than arbitrary small sets $X$, and we apply Corollary \ref{thm:dbl_cc_tech_for_finitary}.
\end{proof}

\section{Double centralizer clones of free $G$-sets}\label{sec:gset}

In our next example of an application of Corollary \ref{thm:method_for_showing_alg_sat}, we let $G$ be a group, written multiplicatively, and we let $A$ be a \textit{free $G$-set}, meaning that $A$ is a set equipped with an associative and unital left action $G \times A \rightarrow A$, written as $(\gamma,a) \mapsto\gamma a$, such that $A$ satisfies the following \textit{freeness axiom}: For all $\gamma \in G$ and $a \in A$, if $\gamma a = a$ then $\gamma = 1$.

The latter assumption on a $G$-set $A$ is equivalent to requiring that $A$ is free in the categorical sense, i.e. that there exists some subset $S \subseteq A$ such that $A$ has the universal property of a \textit{free $G$-set on $S$}, meaning that for every $G$-set $B$ and every map $b:S \rightarrow B$, there is a unique $G$-set homomorphism (i.e. $G$-equivariant map) $\hat{b}:A \rightarrow B$ such that $\hat{b} \circ \iota = b$, where $\iota:S \hookrightarrow A$ is the inclusion.  Indeed, if $A$ is any $G$-set and we write the distinct orbits in $A$ as $Ga_i = \{\gamma a_i \mid \gamma \in G\}$ $(i \in I)$ by choosing any family of elements $a_i \in A$ $(i \in I)$ representing these distinct orbits, then $A$ is a disjoint union of these orbits, and indeed a coproduct $A = \coprod_{i \in I} Ga_i$ in the category of $G$-sets (with their homomorphisms), noting that each $Ga_i$ is a sub-$G$-set of $A$.  Now supposing that $A$ satisfies the freeness axiom, there is clearly an isomorphism of $G$-sets $G \cong Ga_i$ given by $\gamma \mapsto \gamma a_i$ (where $G$ is regarded as a $G$-set under left multiplication).  Taking $S = \{a_i \mid i \in I\}$, it now follows that $A$ satisfies the universal property of a free $G$-set on $S$.

In the following, we write $\Sigma$ to denote the signature consisting of unary operation symbols $\gamma$ $(\gamma \in G)$.  With this notation, every $G$-set $A$ carries the structure of a $\Sigma$-algebra, so that $(\Sigma,A)$ is a finitary algebra and so may be regarded also as a $\fSet$-ary algebra.  Note that if $A$ is a free $G$-set and $Y$ is any nonempty set, then $A^Y$ is clearly a free $G$-set, with the pointwise $G$-action that makes $A^Y$ a power of $A$ in the category of $G$-sets.

\begin{thm}\label{thm:free_gset}
Let $A$ be a free $G$-set with at least two elements, where $G$ is a group, and suppose that $G$ and $A$ are small sets.  Then the $\fSet$-ary double centralizer clone $(\Sigma^A)^{\perp\perp(\fSet)}$ of $A$ is equal to the clone of $\fSet$-ary derived operations $\langle \Sigma^A\rangle$ of $A$.
\end{thm}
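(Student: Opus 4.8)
The plan is to invoke Corollary~\ref{thm:dbl_cc_tech_for_setary}, just as in the proof of Theorem~\ref{thm:vec_sp_thm}. Since $\langle \Sigma^A\rangle \subseteq (\Sigma^A)^{\perp\perp(\fSet)}$ always holds \pref{para:cc_of_alg}, it suffices to fix a small set $X$ and an arbitrary (not necessarily equivariant) map $f:A^X \rightarrow A$ with $f \notin \langle \Sigma^A\rangle$, and to produce a $\Sigma$-homomorphism $h:A^{(A^X)} \rightarrow A$---equivalently, a $G$-equivariant map---with $h(f) \neq f(h \circ u_X)$, where $u_X:X \rightarrow A^{(A^X)}$ is the map $x \mapsto \pi_x$ of \ref{thm:ev_matr}.

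Before constructing $h$, I would record two structural points. First, since $A$ has at least two elements the set $A^X$ is nonempty, so $A^{(A^X)}$ is a power of the free $G$-set $A$ over a nonempty set and is therefore itself a free $G$-set; in particular it is a coproduct of its orbits, each isomorphic to $G$ as a $G$-set, and a $G$-equivariant map out of such an orbit to any $G$-set is freely and independently determined by its value at the chosen generator. Second, by Proposition~\ref{thm:charn_clone_gend_by} the $\Sigma$-subalgebra $\langle \Sigma^A\rangle(X) \subseteq A^{(A^X)}$ is the sub-$G$-set generated by the projections $\pi_x$ $(x \in X)$, hence is simply the union $\coprod_{x \in X} G\pi_x$ of their orbits; moreover these orbits are pairwise distinct, since $\gamma \pi_x = \pi_{x'}$ forces $\gamma = 1$ (evaluate both sides at a constant map in $A^X$ and use freeness) and hence $\pi_x = \pi_{x'}$, whence $x = x'$ because $|A| \geq 2$. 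Consequently, as $f$ lies outside the union of orbits $\langle \Sigma^A\rangle(X)$, the orbit $Gf$ is disjoint from every $G\pi_x$.

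With this in hand I would construct $h$ by exploiting freeness. Decompose $A^{(A^X)}$, as a $G$-set, into $\langle \Sigma^A\rangle(X) \sqcup Gf \sqcup R$, where $R$ is the union of all orbits other than the $G\pi_x$ and $Gf$. Fixing an element $a_0 \in A$, let $h$ be the $G$-equivariant map determined by setting $h(\pi_x) = a_0$ for every $x \in X$ (a consistent prescription, by the distinctness of the orbits $G\pi_x$), setting $h(f) = a_1$ for an element $a_1 \in A$ to be chosen below, and choosing the values of $h$ arbitrarily on the generators of the orbits comprising $R$. Then $h \circ u_X : X \rightarrow A$ is the constant map at $a_0$, which I write as $\bar a_0 \in A^X$, so that $f(h \circ u_X) = f(\bar a_0)$. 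Since $|A| \geq 2$, I may now pick $a_1 \in A$ with $a_1 \neq f(\bar a_0)$, and then $h(f) = a_1 \neq f(\bar a_0) = f(h \circ u_X)$. By Corollary~\ref{thm:dbl_cc_tech_for_setary} this yields $(\Sigma^A)^{\perp\perp(\fSet)} = \langle \Sigma^A\rangle$.

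I do not expect a substantial obstacle: the argument runs parallel to the vector-space case, with ``the span of the projections'' replaced by ``the sub-$G$-set generated by the projections'' and ``extend a linear map off a basis'' replaced by ``extend an equivariant map off a set of orbit representatives''. The only points needing a little care are the verification that the $\pi_x$ occupy pairwise distinct orbits, which is what makes the uniform assignment $h(\pi_x) = a_0$ legitimate, together with the observation that freeness of $A^{(A^X)}$ is precisely what permits prescribing the value of $h$ on the orbit of $f$ independently of its values on all other orbits.
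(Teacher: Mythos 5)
Your proposal is correct and follows essentially the same route as the paper's proof: both establish that $A^{(A^X)}$ is a free $G$-set, show the orbits $G\pi_x$ are pairwise distinct by evaluating at a constant tuple and invoking freeness together with $|A|\geq 2$, and then define the equivariant $h$ on a set of orbit representatives so that $h(\pi_x)=a_0$ and $h(f)=a_1\neq f(\bar a_0)$. The only cosmetic difference is that the paper fixes $a_1$ before constructing $h$ while you defer the choice, which is harmless since $f(\bar a_0)$ does not depend on $h$.
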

\begin{proof}
Let $X$ be a small set, let $f:A^X \rightarrow A$ be an arbitrary map, and suppose that $f \notin \langle \Sigma^A\rangle$.  Let us fix some element $a_0$ of the nonempty set $A$, and write $\overline{a_0} \in A^X$ to denote the constant map with value $a_0$.  Since $A$ has at least two elements, we may also fix some $a_1 \in A$ with $a_1 \neq f(\overline{a_0})$.

The set $A^X$ is nonempty, so $A^{(A^X)}$ is a free $G$-set, by the preceding remark.  The $\Sigma$-algebra $\langle \Sigma^A\rangle(X)$ is the sub-$G$-set of $A^{(A^X)}$ generated by the projections $\pi_x$ $(x \in X)$, i.e. the union
$$\langle \Sigma^A\rangle(X) = \{\gamma \pi_x \mid \gamma \in G, x \in X\} = \bigcup_{x \in X}G\pi_x$$
of the orbits $G\pi_x$ $(x \in X)$, which are distinct (and hence disjoint), by the following argument: Suppose that $G\pi_x = G\pi_y$ for distinct $x,y \in X$.  Then there is some $\gamma \in G$ with $\pi_x = \gamma\pi_y$.  Upon evaluating both sides at $\overline{a_0}$ we find that $a_0 = \gamma a_0$, and hence $\gamma = 1$ since $A$ is free.  Therefore $\pi_x = \pi_y$, and since $A$ has at least two elements this entails that $x = y$, a contradiction.

Since $f \in A^{(A^X)} \backslash \bigcup_{x \in X}G\pi_x$ and the $G\pi_x$ $(x \in X)$ are distinct, we may write the distinct orbits of the $G$-set $A^{(A^X)}$ as $Gs$ $(s \in S)$ for a subset $S \subseteq A^{(A^X)}$ chosen such that $f \in S$ and $\pi_x \in S$ for all $x \in X$.  Hence $A^{(A^X)}$ is a free $G$-set on $S$.  Therefore, by the universal property of this free $G$-set, we may define a $G$-set homomorphism $h:A^{(A^X)} \rightarrow A$ simply by defining $h$ on $S$, as follows: We define $h(s) = a_0$ for each $s \in S\backslash \{f\}$, while we define $h(f) = a_1$.  The resulting $G$-set homomorphism $h$ satisfies $h \circ u_X = \overline{a_0}:X \rightarrow A$, where $u_X:X \rightarrow A^{(A^X)}$ is the map given by $u_X(x) = \pi_x$ $(x \in X)$, so $h(f) = a_1 \neq f(\overline{a_0}) = f(h \circ u_X)$.  The result now follows, by Corollary \ref{thm:dbl_cc_tech_for_setary}.
\end{proof}

We can similarly prove the following analogous result for finite free $G$-sets:

\begin{thm}\label{thm:finite_gset}
Let $G$ be a finite group, and let $A$ be a finite free $G$-set with at least two elements.  Then the finitary double centralizer clone $(\Sigma^A)^{\perp\perp(\aleph_0)}$ of $A$ is equal to the clone of finitary derived operations $\langle \Sigma^A\rangle_{\aleph_0}$ of $A$.
\end{thm}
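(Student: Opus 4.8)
The plan is to repeat the proof of Theorem~\ref{thm:free_gset} \emph{mutatis mutandis}, with finite cardinals $n$ in place of arbitrary small sets $X$, and to conclude via Corollary~\ref{thm:dbl_cc_tech_for_finitary} in place of Corollary~\ref{thm:dbl_cc_tech_for_setary}. First I would fix a finite cardinal $n$ and an arbitrary map $f:A^n \rightarrow A$ with $f \notin \langle \Sigma^A\rangle_{\aleph_0}$. Since $A$ has at least two elements it is finite and nonempty, so $A^n$ is finite and nonempty, and hence $A^{(A^n)}$ is a finite set; moreover, being the power of $A$ by the nonempty set $A^n$, it is a (finite) free $G$-set. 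As in the earlier proof I would fix $a_0 \in A$, write $\overline{a_0} \in A^n$ for the constant map with value $a_0$, and use $\lvert A\rvert \geq 2$ to choose $a_1 \in A$ with $a_1 \neq f(\overline{a_0})$.

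Next, by Proposition~\ref{thm:charn_clone_gend_by}, the $\Sigma$-algebra $\langle \Sigma^A\rangle_{\aleph_0}(n)$ is the sub-$G$-set $\bigcup_{i \in n} G\pi_i$ of $A^{(A^n)}$ generated by the projections $\pi_i:A^n \rightarrow A$ $(i \in n)$, and the orbits $G\pi_i$ $(i \in n)$ are pairwise disjoint by the same freeness argument: if $\pi_i = \gamma\pi_j$, then evaluating at $\overline{a_0}$ gives $a_0 = \gamma a_0$, so $\gamma = 1$ by freeness, whence $\pi_i = \pi_j$ and therefore $i = j$ since $\lvert A\rvert \geq 2$. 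Since $f$ lies outside $\bigcup_{i \in n} G\pi_i$, I may choose a set $S \subseteq A^{(A^n)}$ of representatives of the distinct orbits of the $G$-set $A^{(A^n)}$ with $f \in S$ and $\pi_i \in S$ for all $i \in n$, so that $A^{(A^n)}$ is a free $G$-set on $S$.

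Then I would define a $G$-set homomorphism $h:A^{(A^n)} \rightarrow A$ by its values on $S$, setting $h(s) = a_0$ for each $s \in S \setminus \{f\}$ and $h(f) = a_1$; this determines $h$ uniquely by the universal property of the free $G$-set on $S$. Since $h(\pi_i) = a_0$ for every $i \in n$, we have $h \circ u_n = \overline{a_0}$, i.e. $\bigl(h(\pi_i)\bigr)_{i \in n} = \overline{a_0}$, so $h(f) = a_1 \neq f(\overline{a_0}) = f\bigl((h(\pi_i))_{i \in n}\bigr)$. The claim then follows from Corollary~\ref{thm:dbl_cc_tech_for_finitary}, whose hypotheses hold since $(\Sigma,A)$ is a finitary algebra with $A$ finite.

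I expect no genuinely new difficulty here beyond Theorem~\ref{thm:free_gset}: the orbit-and-freeness computation is insensitive to restricting attention from arbitrary small arities to finite ones, and the only point to keep in mind is simply that $A$ must be finite for Corollary~\ref{thm:dbl_cc_tech_for_finitary} to apply (the edge case $n = 0$, where $A^0$ is a one-element set and $\langle\Sigma^A\rangle_{\aleph_0}(0)$ is empty, being subsumed in the uniform argument).
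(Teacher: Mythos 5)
Your proposal is correct and matches the paper's own proof, which likewise just says to repeat the argument of Theorem~\ref{thm:free_gset} with a finite cardinal $n$ in place of an arbitrary small set $X$ and to invoke Corollary~\ref{thm:dbl_cc_tech_for_finitary} instead of Corollary~\ref{thm:dbl_cc_tech_for_setary}. Your fuller write-out of the orbit/freeness details (and the $n=0$ edge case) is accurate and adds nothing that conflicts with the paper.
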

\begin{proof}
We can employ essentially the same proof as in \ref{thm:free_gset}, but with a finite cardinal $n$ in place of an arbitrary small set $X$, and instead invoke Corollary \ref{thm:dbl_cc_tech_for_finitary}.
\end{proof}

\section{Double centralizer clones for actions of free monoids}\label{sec:free_mon_actions}

Let $S$ be a small set, and let $S^*$ denote the free monoid on $S$, written multiplicatively.  Hence, the elements of $S^*$ are (possibly empty) strings  $s_1s_2...s_n$ consisting of elements $s_1,s_2,...,s_n \in S$, for various non-negative integers $n$.  The binary operation $\cdot$ on $S^*$ is concatenation, and the identity element $e$ is the empty string.

An \textbf{$S^*$-set} is, by definition, a set $A$ equipped with an associative, unital left action $S^* \times A \rightarrow A$, written as $(w,a) \mapsto wa$.  Equivalently, an $S^*$-set is a set $A$ equipped with a monoid homomorphism $S^* \rightarrow \End(A)$, where $\End(A)$ is the monoid of all maps from $A$ to itself, under composition.  Hence, by the universal property of the free monoid $S^*$, an $S^*$-set is equivalently given by a set $A$ equipped with a family of unary operations $s^A:A \rightarrow A$ $(s \in S)$.  $S^*$-sets are the objects of a category in which the morphisms are \textit{$S^*$-set homomorphisms}, i.e. maps that preserve the $S^*$-actions.

Given a set $Z$, an $S^*$-set $A$ is \textbf{free on $Z$} if $A$ is equipped with a map $\eta:Z \rightarrow A$ satisfying the following universal property: For every $S^*$-set $B$ and every map $b:Z \rightarrow B$ there is a unique $S^*$-set homomorphism $\hat{b}:A \rightarrow B$ such that $\hat{b} \circ \eta = b$.  We say that $A$ is \textbf{free} if $A$ is free on some set $Z$.  We may regard $S^*$ itself as an $S^*$-set under left-multiplication, and then $S^*$ is a free $S^*$-set on $1$, when equipped with the map $1 \rightarrow S^*$ that picks out the identity element $e$.  It follows that an $S^*$-set $A$ is free on $Z$ if $A$ is a coproduct $\coprod_{z \in Z} S^*$ in the category of $S^*$-sets, but coproducts in the category of $S^*$-sets are given by disjoint union, so that we may take the underlying set of this coproduct $\coprod_{z \in Z} S^*$ to be $S^* \times Z$.  Indeed, we can construct a free $S^*$-set on $Z$ by equipping the set $S^* \times Z$ with the $S^*$-action defined by $w(v,z) = (wv,z)$ for all $w \in S^*$ and $(v,z) \in S^* \times Z$, as the map $\eta:Z \rightarrow S^* \times Z$ defined by $z \mapsto (e,z)$ satisfies the above universal property.

In this section, we shall show that if $A$ is a small and free $S^*$-set with at least two elements, then the $\fSet$-ary centralizer clone of $A$ is equal to the clone of $\fSet$-ary operations on $A$.

Given any $S^*$-set $A$, we define a binary relation $\lt$ on the set $A$ by declaring for all $a,b \in A$ that $a \lt b$ if and only if there exists some $w \in S^*$ with $wa = b$.  This relation $\lt$ is clearly a \textit{preorder} on $A$, i.e. $\lt$ is reflexive and transitive.  Given an element $a \in A$, we write
$$\up a = \{b \in A \mid a \lt b\},\;\;\;\;\dn a = \{b \in A\ \mid b \lt a\}\;.$$
Note that $\up a$ is the sub-$S^*$-set $S^* a = \{wa \mid w \in S^*\}$ of $A$.

Let us say that an $S^*$-set $A$ \textbf{has unique transitions} provided that for all $w,v \in S^*$ and $a \in A$, if $wa = va$ then $w = v$.  Note that if $A$ has unique transitions, then for each $a \in A$, the map $S^* \rightarrow S^*a = \up a$ is an isomorphism of $S^*$-sets, whose inverse sends each $b \in \up a$ to the unique element $w_{ab} \in S^*$ such that
\begin{equation}\label{eq:wab}w_{ab}a = b\;.\end{equation}
Also, if $A$ has unique transitions, then the preorder $\lt$ on $A$ is a partial order, i.e. it is also antisymmetric, since if $a \leq b$ and $b \leq a$ in $A$ then $wa = b$ and $vb = a$ for some $w,v \in S^*$, so $vwa = a = ea$ and hence $vw = e$, but this entails that both $v$ and $w$ must be the empty string, i.e. $v = w = e$, so that $a = b$.

Given an $S^*$-set $A$ and a set $Y$, the set $A^Y$ carries the structure of an $S^*$-set, with the pointwise action, making $A^Y$ a power in the category of $S^*$-sets.  In the following result on $A^Y$ we use the following terminology: A \textbf{set-theoretic tree}\footnote{In set theory, the term \textit{tree} is used for such structures, though they can have multiple roots.} (cf. \cite[9.10]{Jech}) is a partially ordered set $(T,\lt)$ such that for each $t \in T$ the set $\dn t = \{u \in T \mid u \lt t\}$ is well-ordered by $\lt$, meaning that $\dn t$ is linearly ordered by $\lt$ and every nonempty subset of $\dn t$ has a least element.

\begin{prop}\label{thm:ay_set_theoretic_tree_w_unique_transitions}
Let $A$ be a free $S^*$-set, and let $Y$ be a nonempty set.  Then $A^Y$ is a set-theoretic tree, and $A^Y$ has unique transitions.
\end{prop}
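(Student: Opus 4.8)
The plan is to reduce everything to elementary combinatorics of words in the free monoid $S^*$, using the explicit model for free $S^*$-sets recalled above. Since being a set-theoretic tree and having unique transitions are invariant under isomorphism of $S^*$-sets, I may assume that $A$ is the $S^*$-set with underlying set $S^* \times Z$ and action $w(v,z) = (wv,z)$, for some set $Z$. Right cancellation holds in the free monoid $S^*$, so $A$ has unique transitions: if $w(v,z) = v'(v,z)$ then $wv = v'v$, hence $w = v'$. Because $Y$ is nonempty, it follows at once that $A^Y$ has unique transitions too: from $w\phi = v\phi$ with $\phi \in A^Y$ we get $w\,\phi(y) = v\,\phi(y)$ in $A$ for any fixed $y \in Y$, hence $w = v$. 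By the discussion preceding the statement, $\lt$ is therefore a partial order on $A^Y$, and whenever $\psi \lt \phi$ in $A^Y$ there is a \emph{unique} $w \in S^*$ with $w\psi = \phi$, which I write $w_{\psi\phi}$ as in \eqref{eq:wab}. It then remains only to show that $\dn\phi$ is well-ordered by $\lt$ for each $\phi \in A^Y$.

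So fix $\phi \in A^Y$ and, using $Y \neq \emptyset$, fix an element $y_0 \in Y$; write $\phi(y_0) = (u_0, z_0)$ with $u_0 \in S^*$. For $\psi \in \dn\phi$, evaluating $w_{\psi\phi}\psi = \phi$ at $y_0$ and writing $\psi(y_0) = (v, z_0)$ shows $w_{\psi\phi}\,v = u_0$, so $w_{\psi\phi}$ is a \emph{prefix} of the single word $u_0$. Moreover the map $\psi \mapsto w_{\psi\phi}$ is injective on $\dn\phi$: the action of any fixed $w \in S^*$ on $A$, hence on $A^Y$, is injective by left cancellation in $S^*$, so $w_{\psi\phi} = w_{\psi'\phi}$ together with $w_{\psi\phi}\psi = \phi = w_{\psi\phi}\psi'$ forces $\psi = \psi'$. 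Consequently $\dn\phi$ injects into the set of prefixes of $u_0$, which is finite. Finally, any two elements $\psi, \psi'$ of $\dn\phi$ are $\lt$-comparable: $w_{\psi\phi}$ and $w_{\psi'\phi}$ are both prefixes of $u_0$, hence one is a prefix of the other, say $w_{\psi\phi} = w_{\psi'\phi}\,t$; then $w_{\psi'\phi}(t\psi) = w_{\psi\phi}\psi = \phi = w_{\psi'\phi}\psi'$, so injectivity of the action of $w_{\psi'\phi}$ gives $t\psi = \psi'$, i.e.\ $\psi \lt \psi'$. Thus $\dn\phi$ is a \emph{finite} set \emph{linearly} ordered by $\lt$, hence well-ordered by $\lt$, and $A^Y$ is a set-theoretic tree.

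There is no serious obstacle here; the only step requiring a little care is the one where the partial order on the function space $A^Y$ is pinned down in terms of prefixes of a \emph{single} word $u_0$, which is precisely where the nonemptiness of $Y$ enters — both to produce $u_0$ and to secure unique transitions for $A^Y$. Everything else is just right/left cancellation in $S^*$ together with the already-established fact that unique transitions make $\lt$ antisymmetric.
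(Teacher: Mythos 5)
Your proof is correct, and its overall skeleton matches the paper's: reduce to the standard model $S^*\times Z$, fix $y_0\in Y$, get unique transitions from cancellation in $S^*$, and get linearity of $\dn\phi$ by comparing prefixes of the single word $u_0=$ first component of $\phi(y_0)$. Where you genuinely diverge is the well-ordering step. The paper argues by contradiction: an infinite strictly decreasing chain below $\phi$ would force $\tau(y_0)$ to have length at least $n$ for every $n$. You instead observe that $\psi\mapsto w_{\psi\phi}$ injects $\dn\phi$ into the (finite) set of prefixes of $u_0$ — using injectivity of the action of a fixed $w$, which you correctly derive from left cancellation — so $\dn\phi$ is outright \emph{finite}, and a finite linear order is trivially well-ordered. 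This is a cleaner finish and proves slightly more (finiteness of every down-set, not just well-foundedness); the paper's descent argument is the more robust template if one later wants to relax freeness to settings where down-sets can be infinite but still well-ordered. All the small supporting claims you use check out: $z=z_0$ follows because the action preserves the $Z$-component, $w_{\psi\phi}$ is well-defined by the unique-transitions property you established first, and antisymmetry of $\lt$ is correctly delegated to the paper's preceding discussion.
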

\begin{proof}
$A$ is isomorphic to the free $S^*$-set $S^* \times Z$ constructed above, for some set $Z$, so since the statement to be proved is clearly invariant under isomorphism of $S^*$-sets (with respect to $A$), it suffices to treat the case where $A = S^* \times Z$.  Hence we may identify $A^Y$ with the set $(S^*)^Y \times Z^Y$, equipped with the $S^*$-action given by $w(\tau,\zeta) = (w\tau,\zeta)$ for all $w \in S^*$, $\tau \in (S^*)^Y$, and $\zeta \in Z^Y$, where we write the pointwise left action $S^* \times (S^*)^Y \rightarrow (S^*)^Y$ as $(w,\tau) \mapsto w\tau$, so that $(w\tau)(y) = w\tau(y)$ for all $y \in Y$.

Since $Y \neq \emptyset$, let us fix some $y_0 \in Y$.  To prove that $A^Y$ has unique transitions, let $\alpha = (\tau,\zeta) \in A^Y$ and suppose that $w\alpha = w'\alpha$ for some $w,w' \in S^*$.  Then $w\tau = w'\tau$ in $(S^*)^Y$, so $w \tau(y_0) = w'\tau(y_0)$ in $S^*$, and hence $w = w'$, simply because $S^*$ is a \textit{cancellative} monoid.  This shows that $A^Y$ has unique transitions, so by the preceding discussion, this entails that the preorder $\lt$ on $A^Y$ is a partial order.

Letting $\alpha = (\tau,\zeta) \in A^Y$, it suffices to show that the subset $\dn \alpha$ of $A^Y$ is well-ordered by $\lt$.  To show that $\dn \alpha$ is linearly ordered, let $\alpha_1 = (\tau_1,\zeta_1)$ and $\alpha_2 = (\tau_2,\zeta_2)$ be elements of $\dn \alpha$. Then $w_1\alpha_1 = \alpha = w_2\alpha_2$ for some $w_1,w_2 \in S^*$, and hence $w_1\tau_1 = \tau = w_2\tau_2$ and $\zeta_1 = \zeta = \zeta_2$.  In particular, writing $t_1 = \tau_1(y_0)$ and $t_2 = \tau_2(y_0)$, where $y_0$ is the element of $Y$ fixed above, we have $w_1t_1 = w_2t_2$.  Let us write $\mathsf{len}(w)$ for the length of each string $w \in S^*$.  First let us treat the case where $\mathsf{len}(w_1) \leq \mathsf{len}(w_2)$.  Then since both $w_1$ and $w_2$ are prefixes of $w_1t_1 = w_2t_2$ it follows that $w_1$ is a prefix of $w_2$, i.e. $w_2 = w_1v$ for some $v \in S^*$.  Hence, in $(S^*)^Y$, $w_1\tau_1 = w_2\tau_2 = w_1v \tau_2$, so for all $y \in Y$ we have that $w_1 \tau_1(y) = w_1v \tau_2(y)$ in $S^*$, and therefore  $\tau_1(y) = v\tau_2(y)$ by cancellation.  This shows that $\tau_1 = v\tau_2$, so $\alpha_1 = v\alpha_2$ and hence $\alpha_2 \lt \alpha_1$.  On the other hand, if instead $\mathsf{len}(w_2) \leq \mathsf{len}(w_1)$ then similarly $\alpha_1 \lt \alpha_2$.

This shows that $\dn \alpha$ is linearly ordered by $\lt$.  To show that $\dn \alpha$ is, moreover, well-ordered, let $\Gamma$ be a nonempty subset of $\dn \alpha$.  Suppose that the linearly ordered set $\Gamma$ does not have a least element.  Then, fixing any element $\gamma \in \Gamma$, we may choose a strictly decreasing chain $\gamma_1 = \gamma > \gamma_2 > \gamma_3 > ...$ in $\Gamma$.  For each natural number $n$, $\gamma_n = w_n\gamma_{n+1}$ for some nonempty string $w_n \in S^*$, so that $\mathsf{len}(w_n) \geq 1$.  Hence, for all $n$, $\gamma = w_1w_2...w_n\gamma_{n+1}$.  Writing $\gamma = (\tau,\zeta)$ and $\gamma_n = (\tau_n,\zeta_n)$ for all $n$ (so clearly $\zeta_n = \zeta$), we find that $\tau = w_1w_2...w_n\tau_{n+1}$.  Hence, for all $n$, the the length of the string $\tau(y_0) = w_1w_2...w_n\tau_{n+1}(y_0) \in S^*$ is at least $n$, a contradiction.
\end{proof}

In the following, we say that an element $r$ of an $S^*$-set $A$ is a \textbf{root} if $r$ is minimal (with respect to $\lt$), in the sense that for all $a \in A$, if $a \lt r$ then $a = r$.

\begin{prop}\label{thm:charn_free_sstar_set}
Let $A$ be an $S^*$-set.  The following are equivalent:
\begin{enumerate}
\item $A$ is a free $S^*$-set;
\item $A$ is a set-theoretic tree and has unique transitions;
\item $A$ has unique transitions, and for every element $a \in A$, there is a unique root $r_a$ in $A$ such that $r_a \lt a$.  
\end{enumerate}
If $A$ is free, then $A$ is a free $S^*$-set on the set $R(A)$ of all roots in $A$.
\end{prop}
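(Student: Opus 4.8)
The plan is to establish the cycle of implications $(1) \Rightarrow (2) \Rightarrow (3) \Rightarrow (1)$, with the final assertion of the proposition falling out of the argument for $(3) \Rightarrow (1)$. The implication $(1) \Rightarrow (2)$ is immediate from Proposition \ref{thm:ay_set_theoretic_tree_w_unique_transitions}: taking $Y$ to be a one-element set gives an isomorphism of $S^*$-sets $A^Y \cong A$, and since the preorder $\lt$ is defined from the $S^*$-action alone, this isomorphism is automatically an order-isomorphism, so it transports the tree structure and the property of having unique transitions from $A^Y$ to $A$.

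For $(2) \Rightarrow (3)$, I would first observe that $\lt$ is reflexive, so $a \in \dn a$ and hence $\dn a$ is nonempty; since $A$ is a set-theoretic tree, $\dn a$ is well-ordered by $\lt$ and therefore has a least element $r_a$. To see that $r_a$ is a root, suppose $b \lt r_a$; then $b \lt a$ by transitivity, so $b \in \dn a$ and thus $r_a \lt b$, whence $b = r_a$ by antisymmetry of the partial order $\lt$ on $A$. For uniqueness, if $r$ is any root with $r \lt a$, then $r \in \dn a$, so $r_a \lt r$, and therefore $r = r_a$ because $r$ is a root. Since (2) also supplies the fact that $A$ has unique transitions, all of (3) holds.

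The substantive step is $(3) \Rightarrow (1)$. Write $R = R(A)$ for the set of roots, and realize the free $S^*$-set on $R$ concretely as $S^* \times R$ with action $w(v,r) = (wv,r)$ and universal map $\eta'\colon r \mapsto (e,r)$, as in the discussion preceding this proposition. By \eqref{eq:wab}, applied at the root $r_a \lt a$ provided by (3) and using unique transitions, there is for each $a \in A$ a unique element $w_a \in S^*$ with $w_a\,r_a = a$. I would then check that the $S^*$-equivariant map $\Phi\colon S^* \times R \to A$ given by $(w,r) \mapsto wr$ is a bijection: it is surjective since $a = \Phi(w_a, r_a)$, and it is injective because if $wr = w'r'$, say both equal to $a$, then $r$ and $r'$ are both roots lying below $a$, forcing $r = r_a = r'$ by the uniqueness clause of (3), after which $w = w'$ follows from unique transitions at $r_a$. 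Hence $\Phi$ is an isomorphism of $S^*$-sets; since $\Phi \circ \eta'$ is the inclusion $R \hookrightarrow A$, we conclude that $A$ is free on $R(A)$, which gives both (1) and the final assertion of the proposition.

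I expect the bijectivity of $\Phi$ to be the only point requiring any care, and both halves are short once one has in hand simultaneously the two hypotheses of (3): unique transitions, and a unique root below every element. The remaining implications are bookkeeping, with $(1) \Rightarrow (2)$ reduced entirely to the earlier Proposition \ref{thm:ay_set_theoretic_tree_w_unique_transitions}.
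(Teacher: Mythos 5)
Your proof is correct and follows essentially the same route as the paper's: $(1)\Rightarrow(2)$ via Proposition \ref{thm:ay_set_theoretic_tree_w_unique_transitions} with a singleton exponent, $(2)\Rightarrow(3)$ via the least element of the well-ordered set $\dn a$, and $(3)\Rightarrow(1)$ via the bijectivity of the canonical $S^*$-equivariant map $S^* \times R(A) \to A$, $(w,r)\mapsto wr$. The only difference is that you spell out the bijectivity check and the verification that $r_a$ is a root in slightly more detail than the paper does.
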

\begin{proof}
Firstly, $(1)\Rightarrow(2)$ by \ref{thm:ay_set_theoretic_tree_w_unique_transitions}.  Next, if (2) holds, then for all $a \in A$ we know that the nonempty set $\downarrow a$ is well-ordered by $\lt$ and so has a unique least element $r_a$, which is therefore the unique root of $A$ with $r_a \lt a$, so (3) holds.  Lastly, suppose (3).  We claim that $A$ is isomorphic to the free $S^*$-set $S^* \times Z$ on the set $Z = R(A)$ of all roots in $A$.  By universal property of the latter free $S^*$-set, the inclusion map $\iota:R(A) \hookrightarrow A$ induces an $S^*$-set homomorphism $\hat{\iota}:S^* \times R(A) \rightarrow A$ given by $(w,r) \mapsto wr$.  It follows from (3) that this map $\hat{\iota}$ is a bijection, with inverse $A \rightarrow S^* \times R(A)$ given by $a \mapsto (w_{r_a,a},r_a)$ with the notation of \eqref{eq:wab}.
\end{proof}

\begin{cor}\label{thm:disj_union_roots}
Let $A$ be a free $S^*$-set.  Then $A$ is a disjoint union and a coproduct
$$A \;= \coprod_{r \in R(A)} \up r \;\cong \coprod_{r \in R(A)} S^*$$
of its disjoint sub-$S^*$-sets $\up r \cong S^*$ with $r \in R(A)$.
\end{cor}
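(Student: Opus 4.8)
The plan is to deduce everything from Proposition~\ref{thm:charn_free_sstar_set}, which already establishes that $A$ is a free $S^*$-set on the set $R(A)$ of roots and, more precisely, that the map $\hat{\iota}:S^* \times R(A) \to A$, $(w,r) \mapsto wr$, is an isomorphism of $S^*$-sets. Since the underlying set of the free $S^*$-set $S^* \times R(A)$ is by construction the coproduct $\coprod_{r \in R(A)} S^*$ in the category of $S^*$-sets (coproducts there being disjoint unions, as recalled before \ref{thm:ay_set_theoretic_tree_w_unique_transitions}), and since $\hat{\iota}$ carries the summand $S^* \times \{r\}$ isomorphically onto $S^* r = \up r$, it follows that $A$ is the coproduct of its sub-$S^*$-sets $\up r$, with each $\up r \cong S^*$.

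Concretely, I would first record that $\up r = S^* r$ is a sub-$S^*$-set of $A$ and that the map $S^* \to \up r$, $w \mapsto wr$, is an isomorphism of $S^*$-sets; this is exactly the observation made in the text preceding \eqref{eq:wab}, using that $A$ has unique transitions by Proposition~\ref{thm:charn_free_sstar_set}(2). Next I would verify that $A = \bigcup_{r \in R(A)} \up r$: every $a \in A$ has a root $r_a \lt a$ by Proposition~\ref{thm:charn_free_sstar_set}(3), so $a \in \up{r_a}$. Then I would check that the $\up r$ are pairwise disjoint: if $a \in \up r \cap \up{r'}$ with $r,r' \in R(A)$, then $r$ and $r'$ are both roots lying below $a$, so the uniqueness clause in Proposition~\ref{thm:charn_free_sstar_set}(3) gives $r = r_a = r'$. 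Finally, a disjoint union of sub-$S^*$-sets computes the coproduct in the category of $S^*$-sets, so these observations assemble into the displayed decomposition, including the set-level disjoint union statement.

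There is essentially no obstacle here: the statement is a direct repackaging of Proposition~\ref{thm:charn_free_sstar_set}, and the only point needing a word of justification is that the inclusions $\up r \hookrightarrow A$ exhibit $A$ as a coproduct, which is immediate from the description of coproducts of $S^*$-sets as disjoint unions of the underlying sets equipped with the evident actions.
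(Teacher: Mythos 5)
Your argument is correct and matches the paper's intent: the paper states this corollary without proof, treating it as an immediate consequence of Proposition~\ref{thm:charn_free_sstar_set} via the isomorphism $\hat{\iota}:S^*\times R(A)\to A$, exactly as you do. Your explicit verification of the covering and disjointness of the $\up r$ is a fine (if optional) elaboration of the same route.
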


\begin{cor}\label{thm:ay_free_sstar_set}
Let $A$ be a free $S^*$-set, and let $Y$ be a nonempty set.  Then $A^Y$ is a free $S^*$-set.
\end{cor}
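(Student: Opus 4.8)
The plan is to recognize that this corollary is essentially immediate from the two preceding results, with all the real work already done in Proposition~\ref{thm:ay_set_theoretic_tree_w_unique_transitions}. First I would invoke that proposition: since $A$ is a free $S^*$-set and $Y$ is nonempty, it tells us directly that $A^Y$ is a set-theoretic tree and has unique transitions. Then I would apply the implication $(2)\Rightarrow(1)$ of Proposition~\ref{thm:charn_free_sstar_set} to the $S^*$-set $A^Y$: an $S^*$-set that is a set-theoretic tree with unique transitions is a free $S^*$-set, and in fact free on its set $R(A^Y)$ of roots. That completes the argument.

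I do not anticipate any obstacle here, since the substantive content — verifying the partial-order, well-foundedness, and unique-transitions properties for a power of a free $S^*$-set, which rests on the cancellativity of $S^*$, the prefix comparison of strings, and a length argument ruling out infinite strictly decreasing chains — has already been carried out in the proof of Proposition~\ref{thm:ay_set_theoretic_tree_w_unique_transitions}. The only thing worth double-checking is that the hypotheses line up cleanly, namely that $Y\neq\emptyset$ is exactly the hypothesis needed to apply that proposition, which it is.

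If one instead wanted a self-contained proof that bypasses Proposition~\ref{thm:charn_free_sstar_set}, one could reduce to the case $A = S^*\times Z$ as in the proof of Proposition~\ref{thm:ay_set_theoretic_tree_w_unique_transitions} and exhibit the inverse bijection $A^Y \to S^*\times R(A^Y)$ explicitly by $\alpha \mapsto (w_{r_\alpha,\alpha},\,r_\alpha)$ in the notation of \eqref{eq:wab}, where $r_\alpha$ is the unique root below $\alpha$; but routing through Proposition~\ref{thm:charn_free_sstar_set} is shorter and cleaner, so that is the route I would take.
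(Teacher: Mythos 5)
Your proposal is correct and matches the paper's proof exactly: the paper likewise derives the corollary by combining Proposition \ref{thm:ay_set_theoretic_tree_w_unique_transitions} with the implication $(2)\Rightarrow(1)$ of Proposition \ref{thm:charn_free_sstar_set}. The hypothesis check ($Y \neq \emptyset$) is indeed all that needs verifying.
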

\begin{proof}
This follows from \ref{thm:ay_set_theoretic_tree_w_unique_transitions} and \ref{thm:charn_free_sstar_set}.
\end{proof}

\begin{notn}\label{notn:w_a}
Let $a$ be an element of a free $S^*$-set $A$.  Then we write $w_a := w_{r_a,a}$ to denote the unique element of $S^*$ such that $w_a r_a = a$, where $r_a$ is the unique root in $A$ with $r_a \leq a$ \pref{thm:charn_free_sstar_set}.
\end{notn}

\begin{rem}\label{rem:left_cancellation}
Suppose that $wa = wb$ in a free $S^*$-set $A$, where $w \in S^*$ and $a,b \in A$.  Then $a = b$.  Indeed, writing $c = wa = wb$, we have $a,b \in \;\downarrow c$, so $r_a = r_c = r_b =: r$.  Hence $ww_ar = wa = wb = ww_b r$, so $ww_a = ww_b$ in $S^*$ since $A$ has unique transitions, and hence $w_a = w_b$.  Therefore $a = w_ar = w_br = b$ as needed.
\end{rem}

We shall require the following corollary to \ref{thm:charn_free_sstar_set} and \ref{thm:ay_free_sstar_set}:

\begin{cor}\label{thm:aax_free}
Let $A$ be a nonempty, free $S^*$-set, and let $X$ be a set.  Then $A^{(A^X)}$ is a free $S^*$-set, and every projection $\pi_x \in A^{(A^X)}$ $(x \in X)$ is a root in $A^{(A^X)}$.
\end{cor}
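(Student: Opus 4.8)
The plan is to obtain both assertions directly from the structural facts about free $S^*$-sets established above, namely Corollary~\ref{thm:ay_free_sstar_set}, Corollary~\ref{thm:disj_union_roots}, and the unique-transitions property recorded in Proposition~\ref{thm:charn_free_sstar_set}.

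For the first assertion, that $A^{(A^X)}$ is a free $S^*$-set: since $A$ is nonempty, the set $A^X$ is nonempty for every set $X$ (note that when $X = \emptyset$ the power $A^X$ is a one-element set). Hence Corollary~\ref{thm:ay_free_sstar_set}, applied with the nonempty set $A^X$ in place of $Y$, yields at once that $A^{(A^X)}$ is a free $S^*$-set.

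For the second assertion, fix $x \in X$; I would show that the only $g \in A^{(A^X)}$ with $g \lt \pi_x$ is $g = \pi_x$, which is precisely the statement that $\pi_x$ is a root. So suppose $g \lt \pi_x$, i.e.\ $wg = \pi_x$ for some $w \in S^*$; it suffices to prove $w = e$, since then $g = eg = wg = \pi_x$. Because $A$ is nonempty and free, it has a root $r \in R(A)$ --- for instance by Corollary~\ref{thm:disj_union_roots}, or by applying Proposition~\ref{thm:charn_free_sstar_set}(3) to any element of $A$. Choose an element $a \in A^X$ with $a(x) = r$, defining the remaining coordinates arbitrarily (possible since $A \neq \emptyset$). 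Since the $S^*$-action on the power $A^{(A^X)}$ is pointwise, evaluating $wg = \pi_x$ at $a$ gives $w(g(a)) = (wg)(a) = \pi_x(a) = a(x) = r$ in $A$. Thus $g(a) \lt r$, so $g(a) = r$ by minimality of the root $r$, and therefore $wr = r = er$; since $A$ has unique transitions (Proposition~\ref{thm:charn_free_sstar_set}), this forces $w = e$, as desired.

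I do not expect a genuine obstacle in this argument. The two points needing slight care are the degenerate case $X = \emptyset$ (where the projection claim is vacuous and one only needs that $A^\emptyset$ is nonempty so that the first assertion still applies) and the bookkeeping that the $S^*$-action on the iterated power $A^{(A^X)}$ is computed pointwise, which is exactly what lets a well-chosen element $a \in A^X$ transport the question into the root structure of $A$ itself.
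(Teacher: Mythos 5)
Your proof is correct and takes essentially the same route as the paper's: both deduce freeness of $A^{(A^X)}$ from Corollary~\ref{thm:ay_free_sstar_set}, and both establish the root property by evaluating a purported relation $wg = \pi_x$ at an element of $A^X$ whose $x$-th coordinate is a root $r$ of $A$, then invoking minimality of $r$ together with unique transitions. The paper phrases this second step as a proof by contradiction using the constant family $\bar{r} = (r)_{x\in X}$ rather than your direct argument with an arbitrary $a$ satisfying $a(x)=r$, but the difference is cosmetic.
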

\begin{proof}
$A^{(A^X)}$ is a free $S^*$-set by \ref{thm:ay_free_sstar_set}.  Also, since $A \neq \emptyset$, \ref{thm:charn_free_sstar_set} entails that we may fix some root $r$ in $A$ and consider the constant family $\bar{r} = (r)_{x \in X} \in A^X$.  Letting $x \in X$, suppose that $\pi_x$ is not a root in $A^{(A^X)}$.  Then there exist some nonempty $w \in S^*$ and some $f \in A^{(A^X)}$ with $\pi_x = wf$, so that $r = \pi_x(\bar{r}) = wf(\bar{r})$ and hence $r > f(\bar{r})$ in $A$, contradicting the minimality of $r$.
\end{proof}

For the remainder of the section, let us write $\Sigma$ to denote the finitary signature consisting of unary operation symbols $s$ associated to the elements $s \in S$.  Then, by the discussion at the beginning of this section, $S^*$-sets may be described equivalently as $\Sigma$-algebras, and the category of $S^*$-sets may be identified with $\Sigma\textnormal{-ALG}$.

\begin{thm}\label{thm:free_sstar_set}
Let $A$ be a free $S^*$-set, and suppose that $A$ is small and has at least two elements.  Then the $\fSet$-ary double centralizer clone $(\Sigma^A)^{\perp\perp(\fSet)}$ of $A$ is equal to the clone of $\fSet$-ary derived operations $\langle \Sigma^A\rangle$ of $A$.
\end{thm}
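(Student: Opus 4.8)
The plan is to apply Corollary \ref{thm:dbl_cc_tech_for_setary}: since $\langle\Sigma^A\rangle \subseteq (\Sigma^A)^{\perp\perp(\fSet)}$ always holds, it suffices to show that for every small set $X$ and every map $f:A^X \rightarrow A$ with $f \notin \langle\Sigma^A\rangle$, there is an $S^*$-set homomorphism (equivalently, a $\Sigma$-homomorphism) $h:A^{(A^X)} \rightarrow A$ with $h(f) \neq f(h \circ u_X)$. This proof will closely parallel that of Theorem \ref{thm:free_gset}, with the tree structure of free $S^*$-sets playing the role that orbits play for free $G$-sets.

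First I would unwind $\langle\Sigma^A\rangle(X)$. By Proposition \ref{thm:charn_clone_gend_by}, $\langle\Sigma^A\rangle(X)$ is the sub-$S^*$-set of $A^{(A^X)}$ generated by the projections $\pi_x$ $(x \in X)$, which is the union $\bigcup_{x \in X} \up\pi_x = \bigcup_{x \in X} S^*\pi_x$. By Corollary \ref{thm:aax_free}, $A^{(A^X)}$ is a free $S^*$-set in which every $\pi_x$ is a root. Let $r_f$ be the unique root of $A^{(A^X)}$ with $r_f \lt f$, and write $f = w_f r_f$ with $w_f \in S^*$ as in Notation \ref{notn:w_a}. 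The key consequence of $f \notin \langle\Sigma^A\rangle(X)$ is that $r_f \neq \pi_x$ for every $x \in X$: otherwise $f \in \up r_f = \up\pi_x \subseteq \langle\Sigma^A\rangle(X)$, a contradiction.

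Next I would construct $h$. Since $A$ is a nonempty free $S^*$-set, fix a root $r_0$ of $A$ by Proposition \ref{thm:charn_free_sstar_set}, and let $\overline{r_0} \in A^X$ denote the constant family with value $r_0$. Because $A$ has at least two elements and the map $a \mapsto w_f a$ from $A$ to $A$ is injective by Remark \ref{rem:left_cancellation}, its image has at least two elements, so we may choose $a^* \in A$ with $w_f a^* \neq f(\overline{r_0})$. Now $A^{(A^X)}$ is a free $S^*$-set on its set of roots (Proposition \ref{thm:charn_free_sstar_set}), so there is a unique $S^*$-set homomorphism $h:A^{(A^X)} \rightarrow A$ whose restriction to the set of roots sends $\pi_x \mapsto r_0$ for all $x \in X$, sends $r_f \mapsto a^*$, and sends every other root to $r_0$; this is a well-defined assignment precisely because $r_f$ differs from each $\pi_x$. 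Then $h \circ u_X$ is the constant map $\overline{r_0}:X \rightarrow A$, so $f(h \circ u_X) = f(\overline{r_0})$, while $h(f) = h(w_f r_f) = w_f h(r_f) = w_f a^* \neq f(\overline{r_0})$. The result then follows from Corollary \ref{thm:dbl_cc_tech_for_setary}.

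The only genuinely new wrinkle compared to the $G$-set argument is that $f$ is typically not a root of $A^{(A^X)}$, so rather than prescribing $h(f)$ directly we prescribe $h(r_f)$ and control $h(f) = w_f h(r_f)$; the one place requiring care is ensuring this value can still be made to differ from $f(\overline{r_0})$ despite the prefix $w_f$, which is exactly where left cancellation (Remark \ref{rem:left_cancellation}) together with $|A| \gt 2$ is used. I do not expect any serious obstacle beyond this bookkeeping.
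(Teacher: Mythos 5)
Your proposal is correct and follows essentially the same route as the paper's own proof: reduce via Corollary \ref{thm:dbl_cc_tech_for_setary}, identify $\langle\Sigma^A\rangle(X)$ with $\bigcup_{x\in X}\up\pi_x$, note that the root $r_f$ of $f$ is distinct from every $\pi_x$, and define $h$ on the roots of the free $S^*$-set $A^{(A^X)}$ so that $h\circ u_X$ is constant while $h(f)=w_f h(r_f)$ avoids $f$ of that constant. The only (immaterial) difference is that the paper picks $a_1\neq r_f(\bar a_0)$ and then applies left cancellation to get $w_f a_1\neq w_f r_f(\bar a_0)$, whereas you pick $a^*$ with $w_f a^*\neq f(\overline{r_0})$ directly using injectivity of $a\mapsto w_f a$.
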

\begin{proof}
In order to apply Corollary \ref{thm:dbl_cc_tech_for_setary}, let $f:A^X \rightarrow A$ be an arbitrary map, where $X$ is a small set, and suppose that $f \notin \langle \Sigma^A \rangle$.  By \ref{thm:charn_clone_gend_by}, $\langle \Sigma^A\rangle(X)$ is the sub-$S^*$-set of $A^{(A^X)}$ generated by the projections $\pi_x:A^X \rightarrow A$ $(x \in X)$, each of which is a root of $A^{(A^X)}$ by \ref{thm:aax_free}.  Hence, in view of \ref{thm:disj_union_roots}, $\langle \Sigma^A\rangle(X)$ is the union of the disjoint sub-$S^*$-sets $\uparrow \pi_x = S^*\pi_x$ $(x \in X)$ of $A^{(A^X)}$.  By \ref{thm:ay_free_sstar_set} and \ref{thm:charn_free_sstar_set}, there is a unique root $r_f$ in $A^{(A^X)}$ with $r_f \lt f$, and $r_f \notin \{\pi_x \mid x \in X\}$ since $f \notin \langle \Sigma^A\rangle$.  We may write $f = w_fr_f$ by \ref{notn:w_a}.

Let us fix an element $a_0 \in A$, and let $\bar{a}_0 \in A^X$ denote the constant family $\bar{a}_0 = (a_0)_{x \in X}$.  We then obtain an element $r_f(\bar{a}_0) \in A$, so since $A$ has at least two elements we may fix some $a_1 \in A$ with $a_1 \neq r_f(\bar{a}_0)$.

By \ref{thm:ay_free_sstar_set} and \ref{thm:charn_free_sstar_set}, $A^{(A^X)}$ is a free $S^*$-set on the set $R := R(A^{(A^X)})$ of all its roots, which we may write as a disjoint union $R = \{\pi_x \mid x \in X\} \cup \{r_f\} \cup R_1$ for some $R_1 \subseteq A^{(A^X)}$, by the above.  Hence, in order to define an $S^*$-set homomorphism $h:A^{(A^X)} \rightarrow A$, it suffices to define $h$ on the elements of $R$ as follows:  Define $h(\pi_x) = a_0$ for each $x \in X$, define $h(r_f) = a_1$, and define $h(r) = a_0$ for all $r \in R_1$.  Writing $u_X:X \rightarrow A^{(A^X)}$ for the map given by $x \mapsto \pi_x$, the resulting $S^*$-set homomorphism $h$ has $h \circ u_X = \bar{a}_0$, and we compute that $f(h \circ u_X) = f(\bar{a}_0) = w_fr_f(\bar{a}_0) \neq w_fa_1 = w_fh(r_f) = h(w_f r_f) = h(f)$ by \ref{rem:left_cancellation} since $r_f(\bar{a}_0) \neq a_1$. The result now follows by Corollary \ref{thm:dbl_cc_tech_for_setary}.
\end{proof}

Comparing and contrasting Theorem \ref{thm:free_sstar_set} on the $\fSet$-ary centralizer problem for free actions of a free monoid with Theorem \ref{thm:free_gset} on the $\fSet$-ary centralizer problem for free actions of a group, it is natural to ask whether these two theorems could be encompassed by a single result for free actions of a monoid satisfying suitable hypotheses; we leave this question open for future work.

\bibliographystyle{amsplain}
\bibliography{bib}
\end{document}